\crefname{hypothesis}{Hypothesis}{Hypotheses}
\newcommand\eqae{\stackrel{\mathclap{\normalfont\mbox{\small a.e.}}}{\quad = \quad}}
\newcommand{\defeq}{\vcentcolon=}
\title{A Radon Transform on the Cylinder%\thanks{Submitted to the editors DATE.}
}
\author{Alejandro Coyoli 
\thanks{Department of Mathematics, Tufts University, Medford, MA
  (\email{Alejandro.Coyoli@tufts.edu})}} %\and Fulton Gonzalez\thanks{Department of Mathematics, Tufts University, Medford, MA
\begin{document}

\maketitle

% REQUIRED
\begin{abstract}
We define a parametric Radon transform $R$ that assigns to a Sobolev function on the cylinder $\mathbb{S}\times \mathbb{R}$ in $\mathbb{R}^3$ its mean values along sets $E_\zeta$ formed by the intersections of planes through the origin and the cylinder. We show that $R$ is a continuous operator, prove an inversion formula, provide a support theorem, as well as a characterization of its null space. We conclude by presenting a formula for the dual transform $R^*$. We show that $R$ and its dual $R^*$ are related to the right-sided and left-sided Chebyshev fractional integrals. Using this relationship, we characterize the null space of $R$ and $R^*$ and provide an inversion formula for $R^*$.
\end{abstract}

% REQUIRED
\begin{keywords}
Radon Transform, Parametric Integral Transform, Harmonic Analysis, Fractional Integrals, Cylinder.
\end{keywords}

% REQUIRED
\begin{AMS}
  45Q05, 44A12, 42A16, 26A33
\end{AMS}

\section{Introduction}
The problem of reconstructing a function from its averages along submanifolds goes back to the $19^{th}$ century \cite{RubNotes2006}. The pioneering works of Paul Funk \cite{FunkLinien} and Johann Radon \cite{RadonLangs} contain fundamental ideas that not only facilitated further developments in numerous fields of mathematics and physics, but lie at the heart of modern tomography, applications to homeland security and non-destructive testing in industry, to mention just a few applications \cite{KuchMedicalIm}. 

A mapping that assigns to each sufficiently good function $f$ on a given manifold $X$ a collection of integrals of $f$ over submanifolds of $X$ is commonly called the Radon transform of $f$ \cite{GelfIntegralGeo,HelgGeneralizationsRadon,
RubNotes2006}. However, the term Radon transform has a wide meaning and is used in diverse transforms of the kind \cite{CormSpheresThroughOrigin,
QuellGeneralizationFunk,QuellFunkRadon, QuintoInjectivity}.

In this article we define a Radon-like transform $R$ that assigns to a Sobolev function on the cylinder $\mathbb{S}\times \mathbb{R}$ in $\mathbb{R}^3$ its mean values along sets $E_\zeta$ formed by the intersections of planes through the origin and the cylinder. Unlike the classical Radon transform, the parametric transform $R$ averages functions using the arclength measure $d\sigma$ on $\mathbb{S}$ and not $dS$, the arclength measure on $E_\zeta$. This simplification gives rise to a continuous and invertible transform on the cylinder with strong connections to fractional calculus. By using $d\sigma$ instead of the arclength measure $dS$ on $E_{\zeta}$ we also avoid issues that arise with the averaging of a function over sets of infinite arclength. To the best of our knowledge, this problem has not been considered in the existing literature. 

This article is organized as follows. In \Cref{Sec:Transform}, we introduce the parametric transform $R$. In \Cref{sec:suptheory}, we present some auxiliary concepts and technical results needed for later sections. \Cref{sec:properties} and \Cref{sec:Dualsec} are the core of this paper and contain our main results. 

\section{The Parametric Transform \textbf{R}}\label{Sec:Transform}

\begin{figure}\label{Geoproblem}
\begin{center}
\begin{tabular}{ccc}
\includegraphics[scale=0.2]{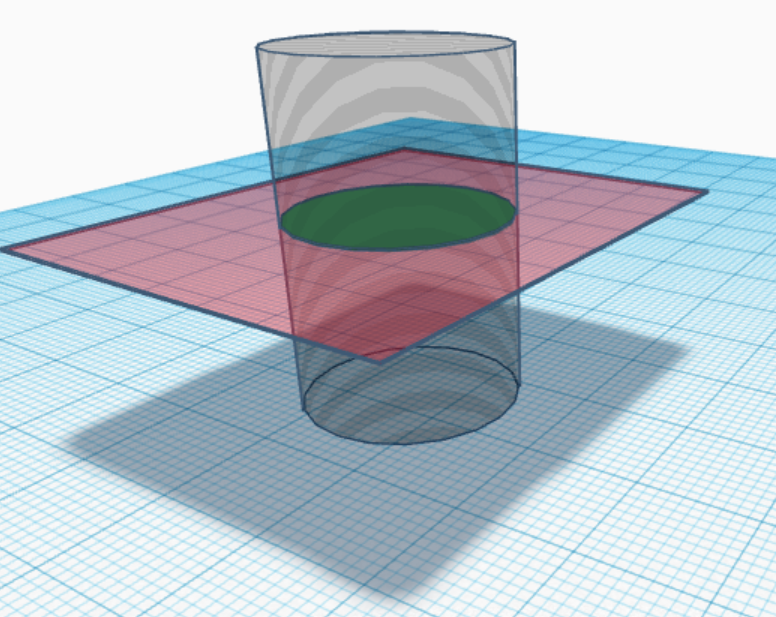} & \includegraphics[scale=0.31]{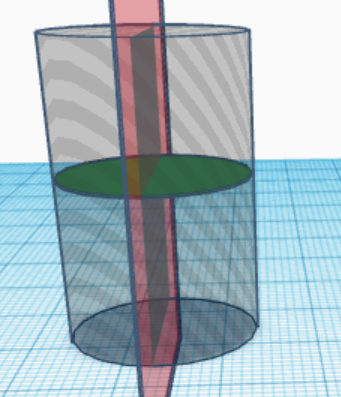} & \includegraphics[scale=0.36]{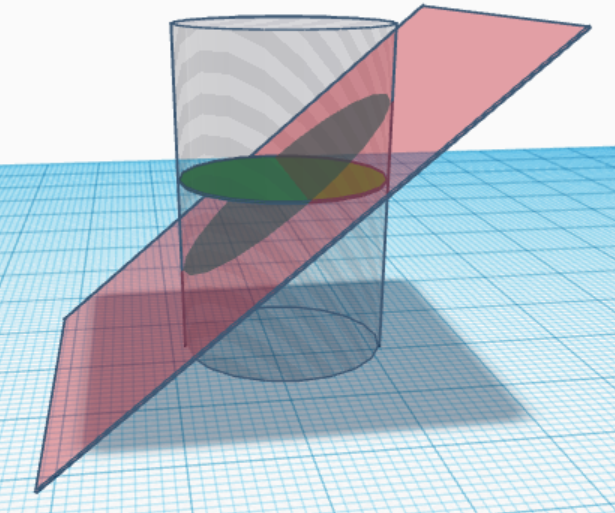} \\ 

\end{tabular} 
\end{center}
\caption{Geometry of sets $E_{\zeta(\theta,\rho)}$ for $\zeta(\theta,\rho) \in \mathbb{S}^2$ a pole, in the equator and elsewhere on $\mathbb{S}^2$.}
\end{figure}
We parametrize points on the cylinder $\mathbb{S}\times \mathbb{R}$ using local coordinates $s \in [0,2\pi) $ and $t\in \mathbb{R}$ by $(e^{is},t)$. We consider functions that are square integrable on the cylinder and whose weak partial derivatives $D^\alpha f$ exist and are square integrable for all multiindices $|\alpha|\leq 2$. We denote this function space $H^2(\mathbb{S}\times\mathbb{R})$ and note that it consists of functions that are continuous almost everywhere.

Consider the subset of $H^2(\mathbb{S}\times\mathbb{R})$
\begin{align*}
H^2_{pc}(\mathbb{S}\times\mathbb{R})= \left\lbrace \left.  f\in H^2(\mathbb{S}\times\mathbb{R}) \right| \lim_{t\to \infty} f(e^{is},t) \eqae C(s)\text{ with } C \text{ integrable, }2\pi-\text{periodic} \right\rbrace.
\end{align*}
These functions behave nicely at infinity and allow their mean values along sets $E_{\zeta}$, formed by the intersection of planes through the origin and the cylinder, to converge.

The Radon-like operator $R$ assigns to functions in $H^2_{pc}(\mathbb{S}\times \mathbb{R})$ their mean values along sets $E_\zeta$ formed by the intersections of planes through the origin and the cylinder. The intersections of the cylinder $\mathbb{S} \times \mathbb{R}$ and planes through the origin define one of three different objects on $\mathbb{S}\times\mathbb{R}$, depending on the normal to the plane $\bm{\zeta}\in \mathbb{S}^2$ (\Cref{Geoproblem}). The set of points resulting from the intersection of $\mathbb{S}\times \mathbb{R}$ and planes through the origin with normal $\bm{\zeta}$ given by a pole on $\mathbb{S}^2$ describes a circle. When $\bm{\zeta}$ lies on the equator of the sphere, the set describes two parallel lines on $\mathbb{S}\times \mathbb{R}$ perpendicular to the plane $z=0$. Finally, when the normal to the plane is neither a pole or in the equator of the sphere, the set $E_\zeta$ describes an ellipse.

We utilize the arclength measure $d\sigma$ on $\mathbb{S}$ to calculate the mean values of a function $f$ along the sets $E_{\zeta}$. Symbolically, we define the operator
\begin{align}
R:H^2_{pc}\rightarrow L^2(\mathbb{S}^2)\\
Rf(\zeta)\defeq \frac{1}{2\pi}\int_{E_{\zeta}} f d\sigma.\label{operator}
\end{align}

Granting the identification $\zeta = \zeta(\theta,\rho)=(\cos \theta \sin \rho, \sin \theta \sin \rho, \cos \rho)$ between $\mathbb{S}^2_0$, the $2-$dimensional sphere excluding the equator, and the set 
\begin{align*}
(\theta,\rho) \in \Xi =  [0,2\pi)\times [0,\pi]\setminus \{\pi/2\},
\end{align*}
we may parametrize the sets $E_{\zeta(\theta,\rho)}$ by
\begin{align*}
E_{\zeta(\theta,\rho)} &=\{ x \in \mathbb{S} \times \mathbb{R} \enspace | \enspace \langle \, \zeta(\theta,\rho), x \, \rangle = 0 \,\}\\
&=\{ (e^{is},-\tan \rho \cos(\theta -s)) \enspace | \enspace s\in [0,2\pi) \,\}.
\end{align*} 
This allows us to rewrite \cref{operator} as
\begin{align}\label{Rparametricnopihalf}
Rf(\theta,\rho) = \frac{1}{2\pi} \int_0^{2\pi} f(e^{is},-\tan \rho \cos(\theta -s))ds
\end{align}
for all $(\theta,\rho) \in \Xi$.

\begin{proposition}\label{prop2.1}
Let $f\in H^2_{pc}(\mathbb{S}\times\mathbb{R})$, then $Rf$ is well defined on $\mathbb{S}^2$.
\end{proposition}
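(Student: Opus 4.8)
The plan is to first secure a pointwise meaning for the integrand in \cref{Rparametricnopihalf}, then to settle convergence away from the equator by a compactness argument, and finally to extend $Rf$ across the equator by a limit that the space $H^2_{pc}$ is designed to control. The first move is to fix the representative of $f$: since $\mathbb{S}\times\mathbb{R}$ is a two-dimensional manifold, the Sobolev embedding $H^2(\mathbb{S}\times\mathbb{R})\hookrightarrow C^0(\mathbb{S}\times\mathbb{R})$ supplies a bounded, (uniformly) continuous representative. This step is not cosmetic: each $E_\zeta$ has measure zero in the cylinder, so an arbitrary $L^2$ representative would leave $f|_{E_\zeta}$ undefined, whereas the continuous representative makes $s\mapsto f(e^{is},-\tan\rho\cos(\theta-s))$ a genuine continuous function of $s$, hence measurable. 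I would also record from the embedding the global bound $\|f\|_\infty\lesssim\|f\|_{H^2}$, to be used below.

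Next, for $(\theta,\rho)\in\Xi$, i.e. off the equator, the curve $E_{\zeta(\theta,\rho)}$ lies in the compact band $\mathbb{S}\times[-|\tan\rho|,|\tan\rho|]$, and the integrand $g(s)=f(e^{is},-\tan\rho\cos(\theta-s))$ is continuous and $2\pi$-periodic in $s$, hence bounded; therefore the integral \cref{Rparametricnopihalf} converges and $Rf(\theta,\rho)$ is finite. I would also note that at the poles $\rho\in\{0,\pi\}$ the integrand reduces to $f(e^{is},0)$, independent of $\theta$, so the value assigned to each pole is unambiguous even though many pairs $(\theta,\rho)$ are identified there. This shows $Rf$ is well defined on all of $\mathbb{S}^2_0$.

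Finally, to define $Rf$ on the equator $\rho=\pi/2$ --- where \cref{Rparametricnopihalf} degenerates because $\tan\rho$ blows up and $E_\zeta$ becomes a pair of lines --- I would set $Rf(\theta,\pi/2)\defeq\lim_{\rho\to\pi/2}Rf(\theta,\rho)$ and prove the limit exists. For a.e.\ $s$ (those with $\cos(\theta-s)\neq0$) the second coordinate $-\tan\rho\cos(\theta-s)\to\pm\infty$ as $\rho\to\pi/2$, so the defining property of $H^2_{pc}$ yields $g(s)\to C(s)$ pointwise a.e.; since $f$ is globally bounded, dominated convergence allows me to pass the limit through the integral and obtain the finite value $\tfrac{1}{2\pi}\int_0^{2\pi}C(s)\,ds$. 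This assigns to each equatorial point a single well-defined value and completes the extension of $Rf$ to $\mathbb{S}^2$.

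The main obstacle is the equatorial step, which is exactly where the naive formula fails and where the hypotheses defining $H^2_{pc}$ are genuinely used. The delicate points are: (i) producing the uniform domination needed to pass the limit, which I obtain from the $L^\infty$ bound of the Sobolev embedding rather than from square integrability alone; and (ii) verifying that the behavior as $t\to+\infty$ encoded by $C(s)$ (together with the corresponding behavior as $t\to-\infty$) makes the two one-sided limits $\rho\to\pi/2^{\mp}$ agree, so that the extended $Rf$ is single-valued on the equator.
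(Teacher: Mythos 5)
Your first two steps (fixing the continuous bounded representative via \Cref{SobolevM}, and observing that off the equator the integrand is continuous and bounded so the integral trivially converges) match the paper. The genuine gap is in the equatorial step. You assert that for a.e.\ $s$ the integrand $g(s)=f(e^{is},-\tan\rho\cos(\theta-s))$ converges to $C(s)$ as $\rho\to\pi/2$, but the defining property of $H^2_{pc}$ only controls $\lim_{t\to+\infty}f(e^{is},t)$. As $\rho\to\pi/2^-$ the second coordinate $-\tan\rho\cos(\theta-s)$ tends to $+\infty$ only on the half-circle where $\cos(\theta-s)<0$; on the other half it tends to $-\infty$, where the hypothesis gives you nothing. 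You flag exactly this as ``delicate point (ii)'' but never resolve it, so dominated convergence cannot be applied to the full integral over $[0,2\pi)$ as written. A symptom of the gap is your claimed limiting value $\tfrac{1}{2\pi}\int_0^{2\pi}C(s)\,ds$, which is independent of $\theta$, whereas the correct value is $\tfrac{1}{\pi}\int_{\theta+\pi/2}^{\theta+3\pi/2}C(s)\,ds$, which in general depends on $\theta$.

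The paper closes this hole before ever taking the limit: since $R$ annihilates functions odd under the central symmetry $(e^{is},t)\mapsto(e^{i(s+\pi)},-t)$ of each $E_\zeta$, one may assume $f$ is even; then, using $2\pi$-periodicity and the substitution $r=s-\pi$, the integral over the full circle is folded onto $\tfrac{1}{\pi}\int_{\theta+\pi/2}^{\theta+3\pi/2}$, a range on which $\cos(\theta-s)\le 0$ and hence the second coordinate tends to $+\infty$. Only then is dominated convergence invoked, using the $L^\infty$ bound from the embedding as the dominating function and the $t\to+\infty$ hypothesis for the pointwise limit. To repair your argument you either need this evenness-and-folding reduction, or you need to supply (and justify) a limit of $f(e^{is},t)$ as $t\to-\infty$, which the definition of $H^2_{pc}$ does not literally provide.
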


\begin{proof}
Let $f\in H^2_{pc}(\mathbb{S}\times\mathbb{R})$. By \Cref{SobolevM} there exits a bounded, continuous function that is equal to $f$ almost everywhere on $\mathbb{S}\times\mathbb{R}$. Without loss of generality, let $f$ denote this continuous representative. Since $R$ annihilates  odd functions, assume that $f$ is even. Then,
\begin{align*}
Rf(\theta,\rho)&= \frac{1}{2\pi}\int_0^{2\pi} f(e^{is},-\tan \rho \cos(\theta-s))\,ds\\ 
&= \frac{1}{2\pi}\int_{\theta+\frac{\pi}{2}}^{\theta+\frac{5\pi}{2}} f(e^{is},-\tan \rho \cos(\theta-s))\,ds,
\end{align*}
as the integrand is a $2\pi-$periodic function of $s$. Using the fact that $f$ is an even function on $\mathbb{S}\times \mathbb{R}$ and making a change of variables $r =s-\pi$ we obtain
\begin{align*}
Rf(\theta,\rho) &=\frac{1}{\pi}\int_{\theta+\frac{\pi}{2}}^{\theta+\frac{3\pi}{2}} f(e^{is},-\tan \rho \cos(\theta-s))\,ds.
\end{align*}

By hypothesis, there exists  a $2\pi-$periodic function $C\in L^1([0,2\pi))$ such that
\begin{align*}
\lim_{t\to \infty} f(e^{is},t) \eqae C(s).
\end{align*}
Since $f$ is bounded on $\mathbb{S}\times \mathbb{R}$, by the Lebesgue Dominated Convergence Theorem,
\begin{align*}
Rf(\theta,\pi/2)&= \frac{1}{\pi} \lim_{\rho\to \pi/2} \, \int_{\theta+\frac{\pi}{2}}^{\theta+\frac{3\pi}{2}} f(e^{is},-\tan \rho \cos(\theta-s))\,ds\\
&= \frac{1}{\pi} \int_{\theta+\frac{\pi}{2}}^{\theta+\frac{3\pi}{2}} \lim_{\rho\to \pi/2}f(e^{is},-\tan\rho \cos (\theta -s))\,ds\\
&=  \frac{1}{\pi} \int_{\theta+\frac{\pi}{2}}^{\theta+\frac{3\pi}{2}}C(s)\,ds\\
&< \infty.
\end{align*}
\end{proof}
Therefore, we define the transform $R$ on the entire sphere $\mathbb{S}^2$, or equivalently in $\Xi \cup \{\pm \pi/2\}$, as 
\begin{align}\label{Rparametric}
Rf(\theta,\rho) \defeq \begin{cases} \frac{1}{2\pi}\int_0^{2\pi} f(e^{is},-\tan \rho \cos (\theta - s))ds &\quad \text{ if } (\theta,\rho) \in \Xi \\
&\\
\lim_{\varphi \to \pi/2} Rf(\theta,\varphi) &\quad \text{ if } (\theta,\rho)\in [0,2\pi)\times \{\pm \pi/2\}.\end{cases}
\end{align}
 
The following result is a corollary of \Cref{prop2.1} for a class of well behaved functions in $H^2_{pc}(\mathbb{S}\times\mathbb{R})$: functions that vanish at infinity. 
\begin{corollary}
Let $f \in H^2(\mathbb{S}\times\mathbb{R})$ be a function that vanishes at infinity. Then $Rf$ is well defined on $\mathbb{S}^2$ and $Rf(\theta,\pi/2)=0$ for all $\theta \in [0,2\pi)$.
\end{corollary}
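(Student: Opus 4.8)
The plan is to recognize that a function vanishing at infinity is simply a special case of the class $H^2_{pc}(\mathbb{S}\times\mathbb{R})$ already handled in \Cref{prop2.1}, namely the case in which the periodic limit function $C$ is identically zero; the value at the equator then falls out directly from the intermediate formula established in that proof. First I would observe that $f$ vanishing at infinity means precisely that $\lim_{t\to\infty} f(e^{is},t) = 0$ for almost every $s$. Taking $C(s) \equiv 0$, which is trivially integrable and $2\pi$-periodic, shows that $f \in H^2_{pc}(\mathbb{S}\times\mathbb{R})$. \Cref{prop2.1} then immediately gives that $Rf$ is well defined on all of $\mathbb{S}^2$.

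For the second assertion I would reuse the formula obtained in the proof of \Cref{prop2.1}, namely
\[
Rf(\theta,\pi/2) = \frac{1}{\pi}\int_{\theta+\frac{\pi}{2}}^{\theta+\frac{3\pi}{2}} C(s)\,ds .
\]
Substituting $C \equiv 0$ yields $Rf(\theta,\pi/2) = 0$ for every $\theta \in [0,2\pi)$, which is the claim.

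The only point needing a little care — and the main, albeit mild, obstacle — is the reduction to the even case. The proof of \Cref{prop2.1} splits $f$ into even and odd parts, discards the odd part since $R$ annihilates it, and uses only the limit as $t\to+\infty$ on the relevant arc. I would verify that the even part of $f$ still vanishes at infinity: since the antipodal map sends $(e^{is},t)$ to $(e^{i(s+\pi)},-t)$, the even part $f_e(e^{is},t) = \tfrac{1}{2}\bigl(f(e^{is},t)+f(e^{i(s+\pi)},-t)\bigr)$ tends to $0$ as $t\to+\infty$ whenever $f$ vanishes at infinity, because then both summands tend to $0$. Hence the associated limit function is again identically zero, and the argument is otherwise identical to that of \Cref{prop2.1}, requiring no new estimates.
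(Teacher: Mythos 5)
Your proposal is correct and matches the paper's intent exactly: the paper offers no separate proof, presenting the statement as an immediate consequence of \Cref{prop2.1} with $C\equiv 0$, which is precisely your argument, including reading off $Rf(\theta,\pi/2)=\frac{1}{\pi}\int_{\theta+\pi/2}^{\theta+3\pi/2}C(s)\,ds=0$ from the intermediate formula in that proof. Your extra check that the even part of $f$ still vanishes at infinity is a sensible (and correct) addition the paper leaves implicit.
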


\section{Notation, Definitions \& Technical Results}\label{sec:suptheory}
Let $\mathbb{S}\times \mathbb{R}$ be the $2-$dimensional cylinder in $\mathbb{R}^3$, $\mathbb{S}^2$ the $2-$dimensional sphere,  $\mathbb{S}^2_+ \subset \mathbb{S}^2$ the upper hemisphere of the sphere and $\mathbb{S}^2_0 \subset \mathbb{S}^2$ the sphere excluding the equator. 

Let $1 \leq p <\infty $, then $L^p(\mathbb{S}\times\mathbb{R})$ denotes the usual Lebesgue function space of complex-valued functions on the cylinder. Let $k\in\{0,1,2,...\}$, we use $W^{k,p}(\mathbb{S}\times \mathbb{R})$ to denote the set of all Sobolev functions $f\in L^p(\mathbb{S}\times \mathbb{R})$ whose weak partial derivatives $D^\alpha f$ exist and belong to $L^p(\mathbb{S}\times \mathbb{R})$ for all $|\alpha| \leq k$. We use $H^k$ to distinguish the Hilbert spaces $H^k(\mathbb{S}\times \mathbb{R})=W^{k,2}(\mathbb{S}\times \mathbb{R})$ and $H^k_{pc}(\mathbb{S}\times \mathbb{R})\subset H^k(\mathbb{S}\times \mathbb{R}) $ the subset of Sobolev functions on the cylinder that converge pointwise almost everywhere to some periodic function $C(s)\in L^1([0,2\pi))$ at infinity. 

\Cref{SobolevR} and \Cref{SobolevM} are key tools for the results in this paper, as they guarantee the existence of a continuous and bounded function that is equal almost everywhere to $f \in H^2(\mathbb{S}\times \mathbb{R})$. We may use \Cref{SobolevM} for functions defined on the cylinder, since $\mathbb{S} \times \mathbb{R}$ is a complete Riemannian manifold of bounded curvature with injectivity radius $\delta =\pi$. 

\begin{theorem}[\cite{AubNonlinearProblems}, Sobolev Embedding]\label{SobolevR}
\begin{itemize}
\item[\textit{Part I}] Let $k$ and $\ell$ be two integers $(k>\ell \geq 0)$, $p$ and $q$ two real numbers $(1 \leq q <p)$ satisfying $1/p =1/q -(k-\ell)/n$. Then, for $\mathbb{R}^n$, $W^{k,q} \subset W^{\ell,p}$ and the identity operator is continuous.
\item[\textit{Part II}] If $(k-r)/n > 1/q$, then $W^{k,q} \subset C^r_B$ and the identity operator is continuous. Here $r \geq 0$ is an integer and $C^r_B$ is the space of $C^r$ functions which are bounded as well as their derivatives of order less than or equal to $r$.
\end{itemize}
\end{theorem}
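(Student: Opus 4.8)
The plan is to establish both embeddings by the classical route: prove the sharp \emph{first-order} estimates (the Gagliardo--Nirenberg--Sobolev inequality for Part I, Morrey's inequality for Part II) and then bootstrap to arbitrary order by iterating on the weak derivatives. In each case I would first prove the relevant inequality for $u \in C_c^\infty(\mathbb{R}^n)$ and then pass to general Sobolev functions by density; the continuity of the inclusion map then follows immediately from the accompanying norm bound, since it is exactly the statement that the identity operator has finite operator norm.

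For Part I, I would begin with the base case $k=1$, $\ell=0$, targeting $\|u\|_{L^{q^*}} \leq C \|\nabla u\|_{L^q}$ with $1/q^* = 1/q - 1/n$. The crux for $q=1$ is to write $|u(x)|$ as the integral of $|\partial_i u|$ along the $i$-th coordinate line, bound it by $\int_{\mathbb{R}} |\partial_i u|\,dx_i$ for each $i$, multiply the $n$ resulting one-dimensional bounds, and integrate, applying the generalized Hölder (Loomis--Whitney) inequality repeatedly to reach the exponent $n/(n-1)$. For general $1 \leq q < n$ I would apply the $q=1$ case to $|u|^{\gamma}$ with $\gamma = q(n-1)/(n-q)$ and rearrange. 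With the first-order estimate in hand, the general statement follows by induction on $k-\ell$: applying the base inequality to $D^\alpha u$ and chaining the intermediate exponents yields $W^{k,q}\subset W^{\ell,p}$ with $1/p = 1/q - (k-\ell)/n$, the combined constants giving continuity.

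For Part II, the hypothesis $(k-r)/n > 1/q$ is equivalent to $k - r > n/q$, so after differentiating $r$ times it suffices to embed $W^{m,q}\subset C^0_B$ whenever $m > n/q$, which reduces to Morrey's inequality in the supercritical range $q > n$. I would prove Morrey's inequality by estimating the oscillation $|u(x)-u(y)|$ via the averaged fundamental theorem of calculus over the segment joining $x$ and $y$, bounding the gradient term by Hölder's inequality to obtain Hölder continuity of exponent $1 - n/q$; together with the $L^q$ bound on $u$ this gives a uniform $C^0_B$ bound, hence a $C^r_B$ bound once the $r$ differentiated derivatives are accounted for.

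The main obstacle is not any single inequality but the bookkeeping of exponents and constants across the iteration: one must check at each inductive step that the intermediate Sobolev indices stay in the admissible range so that the base inequality applies, that the accumulated constants remain finite, and that the borderline cases (for instance when an intermediate exponent equals $n$) are handled by interpolation rather than by the direct estimate. Since the theorem is classical and quoted from the cited reference, I would ultimately defer these verifications to \cite{AubNonlinearProblems}, but the argument above is the proof I would reconstruct.
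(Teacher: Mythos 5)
The paper does not prove this theorem at all: it is imported verbatim from the cited reference \cite{AubNonlinearProblems} and used as a black box, so there is no in-paper argument to compare against. Your outline is the standard classical proof (Gagliardo--Nirenberg--Sobolev via the Loomis--Whitney iterated H\"older trick for $q=1$, the $|u|^\gamma$ substitution for general $q<n$, induction on $k-\ell$ for Part I; Morrey's oscillation estimate for Part II) and it is correct in structure. The one point worth stating more explicitly in Part II is that when $q\leq n$ the hypothesis $k-r>n/q$ does not put you directly in Morrey's supercritical range; you must first chain Part I finitely many times to raise the integrability exponent of the $r$-th derivatives above $n$ (handling the borderline exponent $n$ by dropping to a slightly smaller admissible exponent, which is legitimate on bounded sets or by the $\epsilon$-perturbed indices), and only then apply Morrey. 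You flag this bookkeeping as the main obstacle and defer it to the reference, which is exactly what the paper itself does, so there is no gap relative to the paper's use of the result.
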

\begin{theorem}[\cite{AubNonlinearProblems}, Theorem 2.21]\label{SobolevM}
\Cref{SobolevR} holds for $M_n$ a complete manifold with bounded curvature and injectivity radius $\delta >0$. Moreover, for any $\epsilon >0$, there exists a constant $A_q(\epsilon)$ such that every $\varphi \in W^{1,q}(M_n)$ satisfies:
\begin{align*}
\|\varphi\|_p \leq [K(n,q)+\epsilon]\|\nabla \varphi \|_q + A_q(\epsilon)\|\varphi\|_q
\end{align*}
with $1/p = 1/q -1/n>0$, where $K(n,q)$ is the smallest constant having this property.
 \end{theorem}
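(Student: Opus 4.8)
The plan is to reduce the inequality on $M_n$ to the sharp Sobolev inequality on $\mathbb{R}^n$ by a localization argument, exploiting that bounded curvature together with injectivity radius $\delta>0$ makes normal-coordinate charts uniformly controlled across the whole manifold. The one external ingredient is the Euclidean sharp inequality: for $u\in W^{1,q}(\mathbb{R}^n)$ with $1/p=1/q-1/n$ one has $\|u\|_p\le K(n,q)\|\nabla u\|_q$, with $K(n,q)$ the best constant. The idea is to cut $\varphi$ into pieces supported in small geodesic balls, transplant each piece to $\mathbb{R}^n$ through a normal chart, apply the Euclidean inequality, and reassemble, tracking how close the leading constant stays to $K(n,q)$.

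First I would use the hypotheses to fix, for a prescribed tolerance, a radius $r<\delta$ so small that on every geodesic ball $B(x,r)$ the metric $g_{ij}$, its inverse, and the density $\sqrt{\det g}$ differ from the Euclidean ones by a factor $1+\omega(r)$ with $\omega(r)\to0$ as $r\to0$, uniformly in $x$; this is exactly what the curvature bound and $\delta>0$ buy, since normal coordinates then exist on balls of a fixed size and the metric is $C^0$-close to flat there. Bounded geometry also yields a uniformly locally finite cover $\{B(x_i,r)\}$ of multiplicity at most $N=N(n)$ and a subordinate partition of unity $\{\eta_i\}$ with $0\le\eta_i\le1$, $\sum_i\eta_i=1$, and $|\nabla\eta_i|\le C/r$, all constants independent of $i$.

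Next I would apply the Euclidean inequality to each $\eta_i\varphi$, read in the chart of $B(x_i,r)$. The metric distortion converts $K(n,q)$ into $K(n,q)\bigl(1+\omega(r)\bigr)$, so for $r$ small this is at most $K(n,q)+\epsilon/2$. Writing $\nabla(\eta_i\varphi)=\eta_i\nabla\varphi+\varphi\,\nabla\eta_i$ and using $|\nabla\eta_i|\le C/r$ separates each local right-hand side into a part dominated by $\|\eta_i\nabla\varphi\|_q$ and a part dominated by $(C/r)\|\varphi\|_{L^q(B(x_i,r))}$. Summing over $i$, the bounded multiplicity $N$ lets me pass between the sums $\sum_i\|\cdot\|_q^q$ and the global $L^q$ norms, the embedding $\ell^q\hookrightarrow\ell^p$ (valid since $p>q$) lets me reassemble the left-hand $L^p$ norm without enlarging the exponent, and the cutoff-gradient contributions collect into one term $A_q(\epsilon)\|\varphi\|_q$, whose coefficient scales like $1/r$.

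The main obstacle is sharpness of the leading coefficient: obtaining $K(n,q)+\epsilon$ and not merely some finite $C$. The naive reassembly of the pieces inserts a factor that is a positive power of the multiplicity $N$ in front of $\|\nabla\varphi\|_q$, because a smooth partition of unity must overlap. Removing this factor forces one to refine the cover until the overlap region is negligible on the bulk of $M_n$ and to divert the residual overlap error, together with the chart distortion $\omega(r)$, entirely into the lower-order term; this is precisely why $A_q(\epsilon)$ must be allowed to blow up as $\epsilon\to0$, since the cutoffs scale like $1/r$ while $r$ is forced to $0$. Once this first-order estimate is established, Part I of \Cref{SobolevR}, the continuous embedding $W^{k,q}\subset W^{\ell,p}$, follows by iterating it, and Part II follows by transplanting the Euclidean Morrey embedding into the same uniform charts.
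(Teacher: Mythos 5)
Your overall strategy---uniformly controlled normal charts from the curvature bound and $\delta>0$, a uniformly locally finite cover with a subordinate partition of unity, the sharp Euclidean inequality applied chartwise, and all errors pushed into the zeroth-order term---is exactly the classical route; note the paper itself offers no proof here, quoting the result from Aubin, whose argument runs along these lines. The genuine gap sits precisely at the step you yourself flag as the main obstacle, and your proposed fix does not work. You suggest refining the cover until the overlap is ``negligible on the bulk of $M_n$'' and diverting the residual into $A_q(\epsilon)\|\varphi\|_q$. First, for covers by geodesic balls of radius $r$ the multiplicity $N$ is scale-invariant: shrinking $r$ rescales the configuration but leaves the overlap proportion unchanged, so the overlap region always has measure comparable to that of the balls themselves and $N$ never drops below a dimensional constant. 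Second, and decisively, the spurious factor multiplies the \emph{top-order} term: after reassembling with your cutoffs one faces $\int \bigl(\sum_i \eta_i^{q/p}\bigr)|\nabla\varphi|^q\,dV$, and since $q/p<1$ the best pointwise bound is $\sum_i \eta_i^{q/p}\le N$, an error proportional to $\|\nabla\varphi\|_q^q$. No choice of $A_q(\epsilon)$, however large, can absorb a gradient-term error into $\|\varphi\|_q$, so your outline proves the inequality only with leading constant $N^{1/q}K(n,q)+\epsilon$, not $K(n,q)+\epsilon$.

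The correct resolution is structural rather than a matter of refining the cover. One first proves the inequality in $q$-th power form: write $|\varphi|^q=\sum_i \eta_i|\varphi|^q$, apply Minkowski's inequality in $L^{p/q}$ (legitimate since $p/q\ge 1$) to get $\|\varphi\|_p^q=\bigl\| \,|\varphi|^q\bigr\|_{p/q}\le\sum_i\bigl\|\eta_i^{1/q}\varphi\bigr\|_p^q$, and apply the distorted Euclidean inequality to each $\eta_i^{1/q}\varphi$ in its chart. Expanding $\nabla(\eta_i^{1/q}\varphi)$ and using $(a+b)^q\le(1+\nu)a^q+C(\nu,q)\,b^q$, the leading contribution after summation is $\sum_i\int\eta_i|\nabla\varphi|^q\,dV=\int|\nabla\varphi|^q\,dV$ \emph{exactly}, with no multiplicity loss, because the cutoffs now enter to the first power through $\sum_i\eta_i=1$; every cross term carries $\bigl|\nabla\eta_i^{1/q}\bigr|^q|\varphi|^q$ and, by bounded multiplicity, collects into $A\|\varphi\|_q^q$. (To keep $\nabla\eta_i^{1/q}$ uniformly bounded one builds the partition as $\eta_i=h_i^q/\sum_j h_j^q$ with uniform bump functions $h_i$.) The additive form in the statement then follows from $(x+y)^{1/q}\le x^{1/q}+y^{1/q}$. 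With this replacement your outline becomes the standard proof; your remaining ingredients---the uniform $\bigl(1+\omega(r)\bigr)$ metric distortion in normal coordinates via comparison geometry, iteration for Part I of \Cref{SobolevR}, and transplanting Morrey's embedding for Part II---are sound.
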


We use the following result to obtain the inversion formula in \Cref{sec:properties}.
\begin{lemma}[\cite{CormRadiological}, Equation $16$]\label{Cormack}
Let $\ell\in \mathbb{N}\cup\{0\}$ and $r,z \in \mathbb{R}$ such that $0<z<r$, then 
\begin{align*}
rz\int_{z}^{r}\frac{T_\ell\left( \frac{p}{z}\right)T_\ell\left( \frac{p}{r}\right)}{\sqrt{r^2-p^2}\sqrt{p^2-z^2}p}dp =\frac{\pi}{2}
\end{align*}
\end{lemma}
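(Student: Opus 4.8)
The plan is to pass to a complex variable, recognise the product of Chebyshev polynomials as the real part of a single Chebyshev polynomial evaluated along a circular arc, and then evaluate the resulting integral by residues; the fact that the answer does not depend on $\ell$ is the signal that a contour argument should collapse everything to a single residue, uniformly in $\ell$.

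First I would exploit that on $(z,r)$ one always has $p/z\geq 1$ and $p/r\leq 1$, so that $T_\ell(p/z)=\cosh(\ell u)$ and $T_\ell(p/r)=\cos(\ell v)$ with $\cosh u=p/z$ and $\cos v=p/r$. Using $\cosh(\ell u)\cos(\ell v)=\operatorname{Re}\cosh\bigl(\ell(u+iv)\bigr)$ together with the polynomial identity $T_\ell(\cosh w)=\cosh(\ell w)$ for complex $w$, the numerator of the integrand becomes
\begin{align*}
T_\ell(p/z)\,T_\ell(p/r)=\operatorname{Re}\,T_\ell(\zeta),\qquad \zeta=\cosh(u+iv)=\frac{p^{2}+i\sqrt{(p^{2}-z^{2})(r^{2}-p^{2})}}{rz}.
\end{align*}
A direct computation shows that, as $p$ runs from $z$ to $r$, the point $\zeta$ traces the upper half of the circle centred at $c=\frac{r^{2}+z^{2}}{2rz}$ with radius $\rho=\frac{r^{2}-z^{2}}{2rz}$, running from $\zeta=z/r$ to $\zeta=r/z$; the key algebraic fact, used repeatedly below, is that $c^{2}-\rho^{2}=1$.

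Writing $\zeta=c+\rho e^{i\phi}$, I would check that the singular weight transforms cleanly (up to the sign absorbed by the orientation reversal $p:z\to r \leftrightarrow \phi:\pi\to 0$),
\begin{align*}
\frac{dp}{p\,\sqrt{(r^{2}-p^{2})(p^{2}-z^{2})}}=\frac{d\phi}{2\,rz\,\operatorname{Re}\zeta},
\end{align*}
so that the left-hand side of the lemma equals $\tfrac12\int_{0}^{\pi}\frac{\operatorname{Re}T_\ell(\zeta)}{\operatorname{Re}\zeta}\,d\phi$ (the factors of $rz$ cancelling). Since $T_\ell$ has real coefficients, the integrand is invariant under $\phi\mapsto-\phi$, so this is $\tfrac14\int_{-\pi}^{\pi}\frac{\operatorname{Re}T_\ell(\zeta)}{\operatorname{Re}\zeta}\,d\phi$, a full-circle average over $|\zeta-c|=\rho$. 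Converting to a contour integral via $d\phi=d\zeta/[i(\zeta-c)]$ and using the Schwarz-reflection identity $\bar\zeta=c+\rho^{2}/(\zeta-c)$ valid on the circle gives the simplification $\operatorname{Re}\zeta=\frac{\zeta^{2}-1}{2(\zeta-c)}$ (this is exactly where $c^{2}-\rho^{2}=1$ enters), and the average reduces to
\begin{align*}
\frac{1}{2\pi i}\oint_{|\zeta-c|=\rho}\frac{T_\ell(\zeta)+T_\ell(\bar\zeta)}{\zeta^{2}-1}\,d\zeta .
\end{align*}
The holomorphic term has poles only at $\zeta=\pm1$, of which only $\zeta=1$ lies inside the contour (the circle meets the real axis at $z/r<1$ and $r/z>1$), contributing the residue $T_\ell(1)/2=\tfrac12$.

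The main obstacle is the conjugate term $T_\ell(\bar\zeta)$, which is not holomorphic. Here I would replace $\bar\zeta$ by its holomorphic continuation $g(\zeta)=c+\rho^{2}/(\zeta-c)$, which agrees with $\bar\zeta$ on the contour, and use that $g$ is an orientation-reversing involution with fixed points $z/r$ and $r/z$. The substitution $\eta=g(\zeta)$, combined with the identities $g'(\eta)=-\rho^{2}/(\eta-c)^{2}$ and $g(\eta)^{2}-1=\rho^{2}(\eta^{2}-1)/(\eta-c)^{2}$, makes the factor $g'(\eta)/\bigl(g(\eta)^{2}-1\bigr)$ collapse to $-1/(\eta^{2}-1)$, so that after accounting for the reversed orientation the conjugate term reproduces the holomorphic one and again contributes $\tfrac12$. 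The full-circle average is therefore $1$ for every $\ell$, and retracing the constants yields the value $\pi/2$. I expect the only delicate bookkeeping to be the orientation change under the involution and the verification that $\zeta=1$ (and not $\zeta=-1$) is the unique enclosed pole.
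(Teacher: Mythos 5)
The paper offers no proof of this identity at all: it is quoted verbatim from Cormack's 1964 paper (his equation 16) and used as a black box in the derivation of the inversion formula. Your argument is therefore not competing with an internal proof but supplying one, and after checking the steps I believe it is correct. The parametrization is right: with $\zeta=\bigl(p^{2}+i\sqrt{(p^{2}-z^{2})(r^{2}-p^{2})}\bigr)/(rz)$ one indeed gets $(\operatorname{Re}\zeta-c)^{2}+(\operatorname{Im}\zeta)^{2}=\rho^{2}$ via $(A-B)^{2}+4AB=(A+B)^{2}$ with $A=p^{2}-z^{2}$, $B=r^{2}-p^{2}$, and $c^{2}-\rho^{2}=1$; the differential identity follows from $d(\operatorname{Re}\zeta)=2p\,dp/(rz)=-\rho\sin\phi\,d\phi$ with $\rho\sin\phi=\operatorname{Im}\zeta$; the constants track to $\tfrac{\pi}{2}\cdot\frac{1}{2\pi i}\oint\frac{T_\ell(\zeta)+T_\ell(\bar\zeta)}{\zeta^{2}-1}\,d\zeta$ as you claim; and the circle encloses $\zeta=1$ but not $\zeta=-1$ because $z/r<1<r/z$. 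The two points that genuinely need the care you flag are handled correctly: the conjugate term cannot be evaluated by a naive residue count (its holomorphic extension $T_\ell(g(\zeta))/(\zeta^{2}-1)$ has a pole of order $\ell$ at the centre $\zeta=c$, which lies inside the contour), and your involution substitution $\eta=g(\zeta)$, with the orientation reversal cancelling the minus sign from $g'(\eta)/(g(\eta)^{2}-1)=-1/(\eta^{2}-1)$, is exactly what sidesteps this. The classical verification of Cormack's identity runs through the real substitution $p^{2}=z^{2}\cos^{2}\psi+r^{2}\sin^{2}\psi$ together with the product formula for Chebyshev polynomials; your contour version is essentially a complex-analytic repackaging of that computation, with the advantage that the $\ell$-independence of the answer drops out of the single residue $T_\ell(1)/2$ rather than from trigonometric product-to-sum manipulations. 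If you write it up, the only things to state explicitly are the branch conventions ($u=\operatorname{arcosh}(p/z)\geq 0$, $v=\arccos(p/r)\in[0,\pi]$, so $\operatorname{Im}\zeta\geq 0$) and the fact that $T_\ell(\cosh w)=\cosh(\ell w)$ holds for complex $w$ as a polynomial identity.
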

where $T_\ell$ denotes the $\ell^{th}$ Chebyshev polynomial of the first kind
\begin{align*}
T_\ell(x)= \begin{cases} \cos(\ell \arccos x) &\qquad \text{if } |x|\leq 1, \\
\cosh(\ell \text{arcosh} x) &\qquad \text{if } x \geq  1, \\
(-1)^\ell \cosh(\ell \text{arcosh}( -x)) &\qquad \text{if } x \leq  -1.\end{cases}
\end{align*}

Another set of major auxiliary results comes from fractional integrals, within the field of fractional calculus. In this paper we employ two fractional integral operators as defined by \cite{RubFractional}, the right-sided and left-sided Chebyshev fractional integrals. The formula for the right-sided Chebyshev fractional integral is given by
\begin{align}\label{right-sided}
\left( \Upsilon_{-}^mf \right)(t) = \frac{2}{\sqrt{\pi}} \int_{t}^\infty \frac{T_m \left( \frac{t}{r}\right) f(r)r}{\sqrt{r^2-t^2}}\,dr
\end{align}
and the left-sided Chebyshev fractional integral formula is given by
\begin{align}\label{left-sided}
\left( \Upsilon_+^m f \right)(r) = \frac{2}{\sqrt{\pi}} \int_0^r \frac{T_m\left( \frac{t}{r}\right) f(t)}{\sqrt{r^2 - t^2}}\,dt.
\end{align}

We are particularly interested in knowing the conditions under which these operators converge. 

\begin{proposition}[\cite{RubFractional}, Proposition 2.46]\label{Rub246}
Let $a>0$, the integral $\left( \Upsilon_{-}^mf \right)(t)$ is finite for almost all $t>a$ under the following condition:
\begin{align}\label{condition1}
\int_a^\infty |f(t)|t^{-\eta}\,dt < \infty \quad \text{ where } \quad \eta=\begin{cases} 0 &\quad \text{if }m\text{ is even,}\\  1 &\quad \text{if }m\text{ is odd.} \end{cases}
\end{align}
\end{proposition}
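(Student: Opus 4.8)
The plan is to deduce pointwise finiteness almost everywhere from a local integrability statement: it suffices to show that for every $b>a$ one has $\int_a^b |(\Upsilon_{-}^m f)(t)|\,dt<\infty$, since this forces $(\Upsilon_{-}^m f)(t)<\infty$ for almost every $t\in(a,b)$, and taking a countable union over $b\to\infty$ gives the result on all of $(a,\infty)$. Throughout I would use the elementary bound $|T_m(t/r)|\le 1$, valid because $0<t/r<1$ on the domain of integration $r>t$, together with a sharper bound in the odd case: since $T_m(0)=\cos(m\pi/2)$ vanishes exactly when $m$ is odd, the mean value theorem gives $|T_m(x)|=|T_m(x)-T_m(0)|\le m^2|x|$ for $|x|\le 1$ when $m$ is odd. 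This single observation is what produces the dichotomy in the exponent $\eta$.

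Fixing $b>a$, I would split the defining integral at $r=2b$ into a \emph{far part} $r\in(2b,\infty)$ and a \emph{near part} $r\in(t,2b)$, so as to separate the two sources of divergence. For the far part, $t<b$ forces $r>2t$, hence $\sqrt{r^2-t^2}\ge \tfrac{\sqrt3}{2}\,r$ and the kernel factor $r/\sqrt{r^2-t^2}$ is bounded by $2/\sqrt3$ uniformly in $t\in(a,b)$. Using $|T_m(t/r)|\le 1$ for $m$ even and $|T_m(t/r)|\le m^2 b/r$ for $m$ odd, the far part is dominated, uniformly in $t$, by a constant times $\int_{2b}^\infty |f(r)|\,dr$ when $m$ is even and by a constant times $\int_{2b}^\infty |f(r)|\,r^{-1}\,dr$ when $m$ is odd; both are finite by \cref{condition1}. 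Thus the far part is finite for \emph{every} $t\in(a,b)$.

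For the near part I would invoke Tonelli's theorem to interchange the order of integration, which is legitimate since the integrand is nonnegative:
\begin{align*}
\int_a^b\!\!\int_t^{2b}\frac{|f(r)|\,r}{\sqrt{r^2-t^2}}\,dr\,dt=\int_a^{2b}|f(r)|\,r\!\int_a^{\min(r,b)}\frac{dt}{\sqrt{r^2-t^2}}\,dr\le\frac{\pi}{2}\int_a^{2b}|f(r)|\,r\,dr,
\end{align*}
where the inner integral equals $\arcsin(\min(r,b)/r)-\arcsin(a/r)\le \pi/2$. Because $[a,2b]$ is a bounded interval bounded away from the origin, the weight $r$ is bounded there and $r^{-\eta}$ is comparable to a constant, so $\int_a^{2b}|f(r)|\,r\,dr<\infty$ again follows from \cref{condition1}. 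Hence the near part is integrable over $(a,b)$ and therefore finite for almost every $t\in(a,b)$. Combining the two parts gives $(\Upsilon_{-}^m f)(t)<\infty$ for almost every $t\in(a,b)$, and the conclusion follows.

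I expect the main obstacle to be the interplay between the two singularities rather than either one alone. The near-diagonal factor $1/\sqrt{r^2-t^2}$ is integrable for each fixed $t$ but must be controlled uniformly after integrating in $t$, which is exactly what Tonelli accomplishes once the range is truncated at $2b$; the cutoff is chosen to keep the far kernel nonsingular and to prevent an artificial blow-up of the inner integral near $r=b$ that a sharp indicator weight would introduce. The genuinely delicate point is the tail behaviour, where everything hinges on the vanishing of $T_m(0)$ for odd $m$: this is the only place the even/odd split enters and is precisely what calibrates the weight $t^{-\eta}$ in the hypothesis.
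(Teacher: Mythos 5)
Your argument is correct and complete: the splitting at $r=2b$, the bound $|T_m(x)|\le 1$ together with the sharpened bound $|T_m(x)|\le m^2|x|$ for odd $m$ (valid since $T_m(0)=\cos(m\pi/2)=0$ and $|T_m'|\le m^2$ on $[-1,1]$), and the Tonelli computation giving $\int_a^{\min(r,b)}(r^2-t^2)^{-1/2}\,dt\le\pi/2$ all check out, and the reduction from local integrability in $t$ to a.e.\ finiteness is sound. Note, however, that the paper itself offers no proof of this statement: it is quoted verbatim as Proposition~2.46 of \cite{RubFractional}, so there is nothing in the text to compare against; your write-up is a self-contained justification of the cited result, and it isolates correctly the one genuinely delicate point, namely that the even/odd dichotomy in the weight $t^{-\eta}$ comes entirely from the vanishing of $T_m$ at the origin, which governs the tail $r\to\infty$.
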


\begin{proposition}[\cite{RubFractional}, Proposition 2.57]
Let $b>0$, the integral $\left( \Upsilon_{+}^mf \right)(r)$ is absolutely convergent for almost all $r<b$ under the following condition:
\begin{align}\label{condition2}
\int_0^b t^\eta |f(t)| \,dt < \infty \quad \text{ where } \quad \eta=\begin{cases} 0 &\quad \text{if }m\text{ is even,}\\  1 &\quad \text{if }m\text{ is odd.} \end{cases}
\end{align}
\end{proposition}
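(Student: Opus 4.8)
The plan is to show directly that the integrand defining $\left(\Upsilon_+^m f\right)(r)$ is absolutely integrable in $t$ for almost every $r\in(0,b)$, that is, that
\begin{align*}
\int_0^r \frac{\left|T_m\!\left(\tfrac{t}{r}\right)\right|\,|f(t)|}{\sqrt{r^2-t^2}}\,dt<\infty \qquad \text{for a.e. } r<b.
\end{align*}
Since $0\le t\le r$ forces $t/r\in[0,1]$, the first branch of the definition of $T_m$ applies and $|T_m(t/r)|\le 1$. When $m$ is even this is all that is needed; when $m$ is odd, $T_m$ is an odd polynomial with $T_m(0)=0$, so $x\mapsto T_m(x)/x$ extends continuously to $[-1,1]$ and is bounded there by some constant $C_m$. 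In both cases this yields the single estimate $|T_m(t/r)|\le C_m (t/r)^\eta$ on $[0,1]$, with $\eta$ exactly as in \cref{condition2}. The exponent $\eta$ is thus dictated by the order of vanishing of $T_m$ at the origin, which is the true source of the weight $t^\eta$ appearing in the hypothesis.

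Next I would feed this estimate into Tonelli's theorem. Because the integrand is nonnegative, I may integrate the bound over $r\in(0,b)$ against any convenient positive weight, and the choice $g(r)=r$ is what makes the computation elementary, since $\frac{d}{dr}\sqrt{r^2-t^2}=\frac{r}{\sqrt{r^2-t^2}}$. Writing the region as $\{0<t<r<b\}$ and exchanging the order of integration gives
\begin{align*}
\int_0^b r\left(\int_0^r \frac{t^\eta|f(t)|}{\sqrt{r^2-t^2}}\,dt\right)dr
&=\int_0^b t^\eta|f(t)|\left(\int_t^b \frac{r}{\sqrt{r^2-t^2}}\,dr\right)dt\\
&=\int_0^b t^\eta|f(t)|\,\sqrt{b^2-t^2}\,dt,
\end{align*}
which is bounded above by $b\int_0^b t^\eta|f(t)|\,dt<\infty$ by the hypothesis \cref{condition2}.

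Finally, finiteness of this double integral shows that the nonnegative function $r\mapsto r\int_0^r t^\eta|f(t)|(r^2-t^2)^{-1/2}\,dt$ is integrable on $(0,b)$, hence finite for almost every $r$; for each such $r$ we have $r>0$, so $\int_0^r t^\eta|f(t)|(r^2-t^2)^{-1/2}\,dt<\infty$, and the pointwise bound $|T_m(t/r)|\le C_m r^{-\eta}t^\eta$ then delivers absolute convergence of $\left(\Upsilon_+^m f\right)(r)$ for almost every $r<b$. The one genuine obstacle is the behavior as $t\to 0^+$: with the naive weight $g\equiv 1$ the inner $r$-integral equals $\cosh^{-1}(b/t)\sim\log(1/t)$, which would demand slightly more than \cref{condition2}; the weight $g(r)=r$ is precisely what cancels this logarithmic growth and recovers the sharp hypothesis. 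Establishing the $T_m$ estimate for odd $m$ and justifying the Tonelli exchange (immediate from nonnegativity) are the only other points requiring care, and the argument runs in close parallel to the right-sided case of \Cref{Rub246}.
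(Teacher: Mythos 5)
Your argument is correct: the bound $|T_m(t/r)|\le C_m\,(t/r)^\eta$ on $[0,1]$ (trivial for even $m$; from $T_m(0)=0$ and boundedness of $T_m(x)/x$ for odd $m$), combined with the Tonelli exchange against the weight $r$ — which evaluates the inner integral to $\sqrt{b^2-t^2}\le b$ and, as you note, avoids the logarithmic loss that the unweighted exchange would incur near $t=0$ — yields absolute convergence for almost every $r<b$. The paper does not prove this proposition but merely quotes it from the cited reference, so there is no internal proof to compare against; your route is the standard one for results of this type and is sound as written.
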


Another property we are interested in is injectivity of operators \cref{right-sided} and \cref{left-sided}. The following results show that the operators are not injective for for $|m| \geq 2$. In fact, for $|m| \geq 2$, \cite{RubFractional} characterizes the null space for functions such that \cref{lchiminus} and \cref{lchiplus} hold.

\begin{lemma}[\cite{RubFractional}, Lemma 2.49]\label{upsmenosinj}
If $m=0,1$, then $\Upsilon_{-}^m $ is injective on $\mathbb{R^+}$ in the class of functions satisfying \cref{condition1} for all $a>0$. If $m \geq 2$, then $\Upsilon_{-}^m $ is non-injective in this class of functions.
\end{lemma}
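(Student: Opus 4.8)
The plan is to split the statement into its two regimes and treat them by different mechanisms: for $m=0,1$ I would strip the Chebyshev factor and reduce $\Upsilon_{-}^m$ to a half-order Abel integral, whose injectivity is classical, while for $m\geq 2$ I would exhibit an explicit nonzero null function by exploiting the vanishing of the relevant Mellin-type multiplier.

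For the injective cases, observe that $T_0(x)=1$ and $T_1(x)=x$, so \cref{right-sided} collapses to $\Upsilon_{-}^0 f(t)=\tfrac{2}{\sqrt\pi}\int_t^\infty f(r)r(r^2-t^2)^{-1/2}\,dr$ and $\Upsilon_{-}^1 f(t)=\tfrac{2t}{\sqrt\pi}\int_t^\infty f(r)(r^2-t^2)^{-1/2}\,dr$. I would then substitute $u=r^2$, $x=t^2$ to recognize each of these as the right-sided Riemann--Liouville fractional integral of order $1/2$,
\[
(I_{-}^{1/2}\phi)(x)=\frac{1}{\sqrt\pi}\int_x^\infty \frac{\phi(u)}{\sqrt{u-x}}\,du,
\]
applied to $\phi(u)=f(\sqrt u)$ when $m=0$ and to $\phi(u)=f(\sqrt u)/\sqrt u$ when $m=1$; a direct check shows $\Upsilon_{-}^0 f=I_{-}^{1/2}\phi$ and $\Upsilon_{-}^1 f(t)=t\,(I_{-}^{1/2}\phi)(t^2)$. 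Condition \cref{condition1}, with $\eta=0$ and $\eta=1$ respectively, is precisely what guarantees that the transplanted $\phi$ is integrable against $(u-x)^{-1/2}$ at infinity, so the substitution and the use of Fubini are legitimate and $\phi$ lands in a class on which $I_{-}^{1/2}$ is known to be injective. Injectivity then follows from the classical inversion of Abel's equation, namely that $I_{-}^{1/2}$ admits the left inverse $D_{-}^{1/2}$ on this class; hence $\Upsilon_{-}^m f=0$ forces $\phi=0$ a.e., and therefore $f=0$ a.e.

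For $m\geq 2$ I would construct a concrete element of the null space by testing against a power function. Applying $\Upsilon_{-}^m$ to $f(r)=r^{-c}$ and changing variables by $s=t/r$ and then $s=\cos\phi$ yields the homogeneity relation $\Upsilon_{-}^m[r^{-c}](t)=t^{1-c}M(c)$, where for $c>1$ one has $M(c)=\tfrac{2}{\sqrt\pi}\int_0^{\pi/2}\cos(m\phi)\cos^{c-2}\phi\,d\phi$. Evaluating this Beta-type integral gives the closed form $M(c)=2^{2-c}\sqrt\pi\,\Gamma(c-1)\big/\bigl[\Gamma(\tfrac{c+m}{2})\Gamma(\tfrac{c-m}{2})\bigr]$, from which $M(m)=0$ because the factor $\Gamma(\tfrac{c-m}{2})$ has a pole at $c=m$ while $\Gamma(m-1)$ stays finite. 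Consequently $\Upsilon_{-}^m[r^{-m}]\equiv 0$. Since $m\geq 2$, the function $f(r)=r^{-m}$ is nonzero and satisfies \cref{condition1} for every $a>0$ (indeed $\int_a^\infty r^{-m-\eta}\,dr<\infty$), so it is a genuine nontrivial null function and $\Upsilon_{-}^m$ is non-injective. I would also note that the same computation produces the further null functions $r^{-(m-2k)}$ for $0\le k<(m-1)/2$, foreshadowing the complete null-space description referenced for the case $|m|\geq 2$.

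The main obstacle is not the algebra but the functional-analytic bookkeeping at the two endpoints of the argument. On the injective side, one must verify that \cref{condition1} places the transplanted $\phi$ exactly in a space on which the half-order Abel integral is both convergent (as guaranteed by \Cref{Rub246}) and invertible, so that the formal inversion $D_{-}^{1/2}I_{-}^{1/2}=\mathrm{id}$ is rigorously valid rather than merely formal; this requires care with absolute convergence to license the interchange of integrals. On the non-injective side, the delicate point is justifying that the homogeneity computation yields an \emph{exact} zero: one must confirm that $\Upsilon_{-}^m[r^{-m}]$ converges absolutely for every $t>0$ (which it does, since $T_m(t/r)r^{1-m}(r^2-t^2)^{-1/2}$ is integrable near $r=t$ and decays like $r^{-m}$ at infinity), so that the evaluation $M(m)=0$ indeed forces the transform to vanish identically rather than only in a principal-value sense.
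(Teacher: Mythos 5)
This lemma is quoted from \cite{RubFractional} (Lemma 2.49) and the paper gives no proof of its own, so there is nothing to compare against line by line; your argument stands on its own and it is correct. For $m=0,1$ your reduction of \cref{right-sided} to the half-order Abel integral via $u=r^2$ is exactly the pair of identities $\Upsilon^0_-f=I^{1/2}_{-,2}f$ and $\Upsilon^1_-f=tI^{1/2}_{-,2}t^{-1}f$ that the paper itself records after \Cref{Rub252}, with injectivity then delivered by \Cref{Rub244}; for $m\geq 2$ your Mellin-multiplier computation $M(c)=2^{2-c}\sqrt{\pi}\,\Gamma(c-1)\big/\bigl[\Gamma(\tfrac{c+m}{2})\Gamma(\tfrac{c-m}{2})\bigr]$ checks out, the null functions $r^{-(m-2k)}$ you exhibit satisfy \cref{condition1}, and they are precisely the null space described in \Cref{upsmenos}.
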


\begin{lemma}[\cite{RubFractional}, Lemma 2.58]\label{upsmasinj}
If $m=0,1$, then $\Upsilon_{+}^m $ is injective on $\mathbb{R^+}$ in the class of functions satisfying \cref{condition2} for all $b>0$. If $m \geq 2$, then $\Upsilon_{+}^m $ is non-injective in this class of functions.
\end{lemma}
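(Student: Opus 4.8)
The plan is to treat the two claims by separate mechanisms: for $m=0,1$ I would reduce $\Upsilon_+^m$ to a half-order Abel (Riemann--Liouville) integral, whose injectivity is classical, while for $m\ge 2$ I would exhibit an explicit nonzero element of the null space built from a single monomial.

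\emph{Injectivity for $m=0,1$.} Starting from \cref{left-sided}, I would substitute $t=rx$ to write $(\Upsilon_+^m f)(r)=\tfrac{2}{\sqrt\pi}\int_0^1 T_m(x)f(rx)(1-x^2)^{-1/2}\,dx$, and then pass to the variables $\tau=t^2$, $\xi=r^2$. Since $T_0(x)=1$ and $T_1(x)=x$, the factor $t/r$ appearing when $m=1$ combines with the Jacobian $dt=(2\sqrt\tau)^{-1}d\tau$ in a convenient way, and in both cases the operator becomes a constant multiple of the half-order Abel integral $I^{1/2}g(\xi)=\tfrac{1}{\sqrt\pi}\int_0^\xi g(\tau)(\xi-\tau)^{-1/2}\,d\tau$ applied to $g(\tau)=f(\sqrt\tau)/\sqrt\tau$ (for $m=0$), respectively to $h(\tau)=f(\sqrt\tau)$ (for $m=1$), up to an explicit power of $\xi$ in front. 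I would then check that \cref{condition2}, with $\eta=0$ and $\eta=1$ respectively, says precisely that these transformed functions lie in $L^1_{\mathrm{loc}}(0,\infty)$. Injectivity follows from the semigroup identity $I^{1/2}I^{1/2}=I^1$: if $I^{1/2}g=0$ a.e., applying $I^{1/2}$ once more gives $\int_0^\xi g(\tau)\,d\tau=0$ for all $\xi$, whence $g=0$ a.e.\ by Lebesgue differentiation, and therefore $f=0$ a.e.

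\emph{Non-injectivity for $m\ge 2$.} Here I would test $\Upsilon_+^m$ on the monomial $f(t)=t^{m-2}$. With $t=rx$ and then $x=\cos\phi$, one finds $(\Upsilon_+^m f)(r)=\tfrac{2}{\sqrt\pi}\,r^{m-2}\int_0^{\pi/2}\cos(m\phi)\cos^{m-2}\phi\,d\phi$, and the remaining integral is a classical beta-type integral whose closed form is proportional to $\Gamma(m-1)\big/\big[\Gamma(m)\,\Gamma(0)\big]$; the pole of $\Gamma(0)$ in the denominator forces it to vanish. (For $m=2$ this is just $\int_0^{\pi/2}\cos 2\phi\,d\phi=0$, and for $m=3$ it is $\int_0^{\pi/2}\cos 3\phi\cos\phi\,d\phi=0$, so the mechanism is transparent.) I would then verify that $t^{m-2}$ meets \cref{condition2}: on any $[0,b]$ the integrand $t^{\eta}t^{m-2}$ is integrable since $\eta+m-2\ge 0>-1$ for $m\ge 2$. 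Thus $t^{m-2}$ is a nonzero function in the stated class that is annihilated by $\Upsilon_+^m$.

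\emph{Main obstacle.} The injectivity half is essentially bookkeeping once the Abel reduction is set up, so the crux is the non-injectivity computation: correctly evaluating $\int_0^{\pi/2}\cos(m\phi)\cos^{\lambda}\phi\,d\phi$ and recognizing the Gamma-function zero at $\lambda=m-2$. I expect the only delicate points to be tracking the substitution Jacobians so that \cref{condition2} maps exactly onto local integrability, and confirming that the candidate null function (and, more generally, the admissible members of the family $t^{m-2},t^{m-4},\dots$ that make up the null space) genuinely satisfies the convergence hypothesis rather than merely formally annihilating the kernel.
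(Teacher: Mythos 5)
Your argument is sound, but note that the paper itself offers no proof of this statement: it is imported verbatim as Lemma 2.58 of \cite{RubFractional}, so there is nothing internal to compare against. Taken on its own terms, your proof works. The reduction of \cref{left-sided} for $m=0,1$ to the half-order Abel integral via $\tau=t^2$, $\xi=r^2$ is correct (for $m=1$ the extra factor is the harmless $\xi^{-1/2}$ in front), the translation of \cref{condition2} into local integrability of the transformed function is exact, and the injectivity argument via the semigroup identity $I^{1/2}I^{1/2}=I^{1}$ is legitimate because the Fubini interchange is justified by the finiteness of $\int_0^\xi(\xi-\sigma)^{-1/2}(I^{1/2}|g|)(\sigma)\,d\sigma$, which equals a constant times $\int_0^\xi|g|$. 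For $m\ge 2$ the witness $f(t)=t^{m-2}$ is the right choice: after $t=r\cos\phi$ one needs $\int_0^{\pi/2}\cos(m\phi)\cos^{m-2}\phi\,d\phi=0$, which follows either from the Cauchy beta-integral formula with the $\Gamma(0)$ pole in the denominator, as you say, or more elementarily by expanding $\cos^{m-2}\phi$ into cosines of frequencies $m-2,m-4,\dots$ and noting that every resulting product $\cos(m\phi)\cos(j\phi)$ contributes only nonzero even frequencies, each of which integrates to zero over $[0,\pi/2]$; and $t^{\eta+m-2}$ is indeed integrable near $0$ since $\eta+m-2\ge 0$. Your construction is also consistent with the full null-space description quoted as \Cref{upsmas}, whose basis $t^{m-2j}$, $j=1,\dots,\lfloor m/2\rfloor$, contains your monomial as the $j=1$ element. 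The only caveat is cosmetic: your stated closed form $\Gamma(m-1)/[\Gamma(m)\Gamma(0)]$ should be read as the formal limit of the beta-integral formula, which is why the explicit checks at $m=2,3$ (or the parity argument above) are worth keeping as the actual justification.
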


Finally, if we let
\begin{align*}
\chi^-_m(t)&=\begin{cases} 1 &\quad \text{if }m\text{ is even},\\ t^{-1}(1+ |\log t|) &\quad \text{if }m\text{ is odd}\end{cases}\\
\chi^+_m(t)&=\begin{cases} 1 &\quad \text{if }m\text{ is even},\\ t(1+ |\log t|) &\quad \text{if }m\text{ is odd}\end{cases}
\end{align*}
and given $ a \geq 0$, we denote by $L_{\chi^-_m}^1(a,\infty)$ the set of all functions $f$ on $(a,\infty)$ such that 
\begin{align}\label{lchiminus}
\int_{a_1}^\infty |f(t)| \chi_m^{-}(t)\,dt <\infty \qquad \text{for all }  a_1 > a,
\end{align} 
and for $0<b\leq \infty$, $L_{\chi^+_m}^1(0,b)$ the set of all functions $f$ on $(0,b)$ such that 
\begin{align}\label{lchiplus}
\int_0^{b_1} |f(t)| \chi_m^{+}(t)\,dt <\infty \qquad \text{for all } b_1 < b,
\end{align}
then the following two lemmas characterize the null spaces of both operators for functions in $L^1_{\chi^-_m}(a,\infty)$ and $L^1_{\chi^+_m}(0,b)$.
\begin{lemma}[\cite{RubFractional}, Lemma 2.62]\label{upsmenos}
Let $m\in \mathbb{N}$ such that $m \geq 2$, $M = \left\lfloor \frac{m}{2} \right\rfloor$ and $a \geq 0$. If $f \in L_{\chi^-_m}^1(a,\infty)$ and $(\Upsilon_{-}^m f)(r)=0 $ for almost all $r>a$, then for some coefficients $c_k$
\begin{align*}
f(t)= \sum_{k=0}^{M-1} c_k t^{2k-m} \qquad \text{a.e. on }(a,\infty).
\end{align*}
\end{lemma}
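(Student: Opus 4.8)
The plan is to exploit the scaling structure of $\Upsilon_-^m$ and reduce the identification of the null space to computing a single Mellin symbol, which transparently produces both the monomials $t^{2k-m}$ and the cutoff index $M-1$. First I would record that $\Upsilon_-^m$ is homogeneous: a direct change of variables gives $\Upsilon_-^m[f(\lambda\,\cdot)](t)=\lambda^{-1}(\Upsilon_-^m f)(\lambda t)$, so that $\Upsilon_-^m$ acts as a Mellin convolution. Consequently its action on a power $f(r)=r^{-\alpha}$ is forced to be $(\Upsilon_-^m f)(t)=c(\alpha,m)\,t^{1-\alpha}$, and substituting $u=t/r$ and then $u=\cos\phi$ yields the explicit symbol
\begin{equation*}
c(\alpha,m)=\frac{2}{\sqrt{\pi}}\int_0^{\pi/2}\cos(m\phi)\,\cos^{\alpha-2}\phi\,d\phi
=\frac{4\sqrt{\pi}}{2^{\alpha}}\,\frac{\Gamma(\alpha-1)}{\Gamma\!\left(\tfrac{\alpha+m}{2}\right)\Gamma\!\left(\tfrac{\alpha-m}{2}\right)},
\end{equation*}
using the classical evaluation of $\int_0^{\pi/2}\cos(m\phi)\cos^{\beta}\phi\,d\phi$ with $\beta=\alpha-2$.

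Next I would read off the zeros of the symbol, since these index exactly the annihilated monomials. As $\Gamma$ is zero-free and has poles only at the non-positive integers, $c(\alpha,m)$ vanishes precisely when a denominator factor has a pole unmatched by the numerator. The factor $\Gamma(\tfrac{\alpha-m}{2})$ has poles at $\alpha=m-2k$, $k\in\{0,1,2,\dots\}$, and at such $\alpha$ the numerator $\Gamma(\alpha-1)=\Gamma(m-2k-1)$ stays finite exactly when $k<(m-1)/2$, i.e. for $k=0,\dots,M-1$. Hence $\Upsilon_-^m$ annihilates $t^{-\alpha}=t^{2k-m}$ for precisely these $k$, giving the easy inclusion once one checks that each such monomial actually lies in $L^1_{\chi^-_m}(a,\infty)$ — this is where the parity-dependent weight $\chi_m^-$ enters, via \Cref{Rub246}. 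The complementary pole family $\alpha=-m-2k$ produces growing monomials, which are excluded by the function class, explaining why the sum terminates at $M-1$.

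For the reverse inclusion — that these monomials span the whole null space — the algebra is already contained in the symbol computation: $c(\alpha,m)$ has no further zeros in the decay range permitted by $L^1_{\chi^-_m}$, so at the level of the Mellin variable any null function is forced to be carried by the finite zero set, i.e. a combination $\sum_{k=0}^{M-1}c_k t^{2k-m}$. The main obstacle is therefore analytic rather than algebraic: the Mellin transform applies cleanly only for $a=0$, whereas the statement permits any $a\ge 0$ and only a weighted-$L^1$ hypothesis in place of $L^2$. To make this rigorous I would either localize and suitably extend $f$ to $(0,\infty)$, or — more robustly — peel off the null space by subtracting monomials to cancel the slowest-decaying behavior and then invoke the injectivity of the base operators $\Upsilon_-^0,\Upsilon_-^1$ from \Cref{upsmenosinj} on the remainder, the $M$ constants of integration that appear being exactly the $c_k$. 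In either route the delicate point is controlling convergence of every intermediate integral inside the weighted class $L^1_{\chi^-_m}$, which is precisely what the weights $\chi_m^{\pm}$ and the convergence criterion of \Cref{Rub246} are tailored to guarantee.
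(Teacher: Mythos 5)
First, note that the paper offers no proof of this statement: it is imported verbatim as Lemma 2.62 of \cite{RubFractional}, so there is no internal argument to compare yours against. Judged on its own terms, your proposal correctly and cleanly establishes the \emph{easy} inclusion. The homogeneity identity, the reduction to the Mellin symbol $c(\alpha,m)$, the Gamma-function evaluation, and the bookkeeping showing that the unmatched poles of $\Gamma\bigl(\tfrac{\alpha-m}{2}\bigr)$ occur precisely at $\alpha=m-2k$ with $0\le k\le M-1$ are all correct (and the corresponding monomials $t^{2k-m}$ do lie in $L^1_{\chi^-_m}(a,\infty)$, the worst case being $t^{-2}$ for even $m$ and $t^{-3}$ for odd $m$). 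This is a good way to \emph{discover} the null space.

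The gap is the converse, which is the actual content of the lemma, and neither of your two proposed routes is carried out. The Mellin computation proves only that $\Upsilon_-^m$ kills these particular powers; to conclude that an arbitrary null function is a combination of them you would need $f$ to be globally defined on $(0,\infty)$ with a Mellin transform living on a strip containing the relevant line, and then an argument that a solution of $\hat f(\alpha)\,c(\alpha,m)=0$ must be supported on the finite zero set. The hypotheses give you neither: $f$ lives only on $(a,\infty)$ with a local weighted-$L^1$ condition, and $(\Upsilon_-^m f)(r)$ is only assumed to vanish for $r>a$. Your second route (peel off monomials and reduce to the injective cases $m=0,1$ of \Cref{upsmenosinj}) is the right idea, but as stated it is circular: you cannot ``subtract monomials to cancel the slowest-decaying behavior'' of $f$ before knowing that $f$ has such monomial behavior. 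What makes the reduction work in \cite{RubFractional} is a concrete factorization/recursion for $\Upsilon_-^m$ (ultimately coming from $T_m(x)=2xT_{m-1}(x)-T_{m-2}(x)$) expressing it through lower-order operators whose null spaces are a single explicit monomial each; the constants $c_k$ then arise as genuine constants of integration in an induction on $m$. Without exhibiting that identity and checking at each step that the intermediate functions remain in the class where \Cref{Rub246} guarantees convergence, the proof is not complete.
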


\begin{lemma}[\cite{RubFractional}, Lemma 2.61]\label{upsmas}
Let $m\in \mathbb{N}$ such that $m \geq 2$, $M = \left\lfloor \frac{m}{2} \right\rfloor$ and $b \geq 0$. If $f \in L_{\chi^+_m}^1(0,b)$ and $(\Upsilon_{+}^m f)(r)=0 $ for almost all $0<r<b$, then for some coefficients $c_j$
\begin{align*}
f(t) =\sum_{j=1}^M c_j t^{m-2j} \qquad \text{a.e. on }(0,b).
\end{align*}
\end{lemma}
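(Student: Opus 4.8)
The plan is to reduce the statement to its already-established right-sided counterpart, \Cref{upsmenos}, by exploiting the reciprocal substitution $t \mapsto 1/t$, which intertwines the two Chebyshev fractional integrals \cref{left-sided} and \cref{right-sided} up to an explicit power weight.

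First I would derive the intertwining identity. Beginning with the defining integral \cref{left-sided} for $(\Upsilon_+^m f)(r)$, set $r = 1/\rho$ and substitute $t = 1/\tau$, so that $t \in (0,r)$ corresponds to $\tau \in (\rho,\infty)$. Since $t/r = \rho/\tau$ and $\sqrt{r^2 - t^2} = (\rho\tau)^{-1}\sqrt{\tau^2 - \rho^2}$, a direct computation collapses the transformed integral into \cref{right-sided} applied to an auxiliary function, yielding
\[
(\Upsilon_+^m f)(1/\rho) = \rho\,(\Upsilon_-^m g)(\rho), \qquad g(\tau) \defeq \tau^{-2} f(1/\tau).
\]
Because $\rho \mapsto 1/\rho$ is a diffeomorphism of $(1/b,\infty)$ onto $(0,b)$ preserving null sets, this identity shows that $(\Upsilon_+^m f)(r) = 0$ for almost every $r \in (0,b)$ if and only if $(\Upsilon_-^m g)(\rho) = 0$ for almost every $\rho \in (1/b,\infty)$.

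Second I would verify that the reciprocal substitution carries the admissible class $L^1_{\chi^+_m}(0,b)$ onto $L^1_{\chi^-_m}(1/b,\infty)$, so that the hypotheses of \Cref{upsmenos} are genuinely met. Writing out $\int_{a_1}^\infty |g(\tau)|\,\chi^-_m(\tau)\,d\tau$, inserting $g(\tau) = \tau^{-2}f(1/\tau)$, and substituting $\tau = 1/t$ turns this integral precisely into $\int_0^{1/a_1} |f(t)|\,\chi^+_m(t)\,dt$: the factor $\tau^{-2}$ from $g$ together with the Jacobian exactly accounts for the passage between the weights, the even case ($\chi^\pm_m \equiv 1$) being immediate and the odd case converting $\tau^{-1}(1+|\log\tau|)$ into $t(1+|\log t|)$. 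Thus \cref{lchiplus} for $f$ on $(0,b)$ is equivalent to \cref{lchiminus} for $g$ on $(1/b,\infty)$.

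Finally I would translate the conclusion back. Applying \Cref{upsmenos} to $g$ gives $g(\tau) = \sum_{k=0}^{M-1} c_k \tau^{2k-m}$ almost everywhere, and since $f(t) = t^{-2} g(1/t)$ we obtain $f(t) = \sum_{k=0}^{M-1} c_k\, t^{m-2(k+1)}$; reindexing by $j = k+1 \in \{1,\dots,M\}$ yields $f(t) = \sum_{j=1}^{M} c_j\, t^{m-2j}$ almost everywhere on $(0,b)$, as claimed. I expect the second step to be the main obstacle: the algebraic intertwining and the final reindexing are routine, but one must track the weighted integrability conditions carefully — especially the logarithmic weights in the odd case — to be certain that $g$ lies in exactly the class for which \Cref{upsmenos} has been established.
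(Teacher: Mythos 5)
The paper offers no proof of this statement: it is imported verbatim as Lemma 2.61 of \cite{RubFractional}, so there is no internal argument to compare yours against. Your reduction to the right-sided companion result \Cref{upsmenos} is correct, and it is essentially the standard way the left- and right-sided operators are related (the inversion $t\mapsto 1/t$ is the usual bridge between the two). The key identity checks out: with $r=1/\rho$, $t=1/\tau$ one has $dt=-\tau^{-2}\,d\tau$ and $\sqrt{r^2-t^2}=\sqrt{\tau^2-\rho^2}/(\rho\tau)$, so the integrand of \cref{left-sided} becomes $T_m(\rho/\tau)\,f(1/\tau)\,\rho\,\tau^{-1}(\tau^2-\rho^2)^{-1/2}$ and indeed $(\Upsilon_+^m f)(1/\rho)=\rho\,(\Upsilon_-^m g)(\rho)$ with $g(\tau)=\tau^{-2}f(1/\tau)$; since $\rho>0$, the vanishing sets correspond exactly. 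The weight bookkeeping is also right: for $m$ even both weights are $1$ and the Jacobian cancels the $\tau^{-2}$, while for $m$ odd the substitution turns $\tau^{-2}\cdot\tau^{-1}(1+|\log\tau|)\,d\tau$ into $t(1+|\log t|)\,dt$, so $f\in L^1_{\chi^+_m}(0,b)$ is equivalent to $g\in L^1_{\chi^-_m}(1/b,\infty)$, with the quantifiers $b_1<b$ and $a_1>1/b$ matching (and $b=\infty$ giving $a=0$, which \Cref{upsmenos} permits). The final reindexing $j=k+1$ from $f(t)=t^{-2}g(1/t)=\sum_{k=0}^{M-1}c_k t^{m-2(k+1)}$ is correct. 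The only caveat is logical rather than mathematical: your proof is conditional on \Cref{upsmenos}, which this paper also only cites, so within the paper's own logic you have traded one external reference for another; that is a perfectly reasonable economy, and it does show the two null-space characterizations are equivalent rather than independent facts.
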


The following two results are used to invert the dual transform of $R$ in \Cref{sec:Dualsec}.

\begin{theorem}[\cite{RubFractional}, Theorem 2.44]\label{Rub244}
Let $f$ satisfy
\begin{align*}
\int_a^\infty |f(r)|dr <\infty \qquad \text{ for every }a >  0.
\end{align*}
Then 
$$f(t)= (\textsc{D}^{\frac{1}{2}}_{-,2} I^{\frac{1}{2}}_{-,2}f)(t)$$
where 
\begin{align}
\left( I^{\frac{1}{2}}_{-,2} f \right)(t)&=\frac{2}{\sqrt{\pi}} \int_t^\infty \frac{f(r)r}{\sqrt{r^2 -t^2}}dr \label{Ithing}\\
\textsc{D}^{\frac{1}{2}}_{-,2} \varphi &= -\frac{1}{2} \frac{d}{dt} t I^{\frac{1}{2}}_{-,2} t^{-2} \varphi,
\end{align}
and $t>0$.
\end{theorem}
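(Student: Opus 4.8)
The plan is to prove the inversion by computing the composition $\textsc{D}^{\frac12}_{-,2} I^{\frac12}_{-,2} f$ directly and showing that it collapses to $f$ through the fundamental theorem of calculus. Writing $\varphi = I^{\frac12}_{-,2}f$ and $\psi(t) = (I^{\frac12}_{-,2}\, t^{-2}\varphi)(t)$, the assertion is exactly $-\tfrac12 \frac{d}{dt}\, t\,\psi(t) = f(t)$. Substituting the defining integral \cref{Ithing} for each of the two factors $I^{\frac12}_{-,2}$ expresses $\psi$ as the iterated integral
\begin{align*}
\psi(t) = \frac{4}{\pi}\int_t^\infty \frac{1}{r\sqrt{r^2-t^2}}\left(\int_r^\infty \frac{f(s)\,s}{\sqrt{s^2-r^2}}\,ds\right)dr .
\end{align*}

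First I would interchange the order of integration over the region $t<r<s<\infty$. Applying Tonelli to the integrand with $f$ replaced by $|f|$ and carrying out the inner integral (see below) produces the bound $\tfrac{2}{t}\int_t^\infty |f(s)|\,ds$, which is finite by the hypothesis $\int_a^\infty |f|<\infty$; hence the double integral is absolutely convergent and Fubini gives
\begin{align*}
\psi(t) = \frac{4}{\pi}\int_t^\infty f(s)\,s\left(\int_t^s \frac{dr}{r\sqrt{r^2-t^2}\,\sqrt{s^2-r^2}}\right)ds .
\end{align*}

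The core computation is the inner integral. The substitution $w=r^2$ turns it into $\tfrac12\int_{t^2}^{s^2} \frac{dw}{w\sqrt{(w-t^2)(s^2-w)}}$, and the classical evaluation $\int_a^b \frac{dw}{w\sqrt{(w-a)(b-w)}} = \pi/\sqrt{ab}$, valid for $0<a<b$, reduces it to $\pi/(2ts)$. This is precisely the semigroup law $I^{\frac12}I^{\frac12}=I^{1}$ in the present scaling: the multiplier $t^{-2}$ is exactly what makes the two half-order integrals compose into a genuine antiderivative, with the singular kernels producing a Beta-function constant. Feeding this back, the factor $s$ cancels and the $s$-integration decouples, yielding the clean identity $\psi(t)=\tfrac{2}{t}\int_t^\infty f(s)\,ds$.

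The conclusion is then immediate. We have $t\,\psi(t)=2\int_t^\infty f(s)\,ds$, and since $f\in L^1(a,\infty)$ for every $a>0$ the Lebesgue differentiation theorem yields $\frac{d}{dt}\,t\,\psi(t)=-2f(t)$ for almost every $t>0$, so that $-\tfrac12\frac{d}{dt}\,t\,\psi(t)=f(t)$, which is the desired formula. I expect the main obstacle to be not the algebra, which is driven entirely by the Beta-type evaluation of the inner integral, but the analytic bookkeeping: justifying the Fubini swap and the almost-everywhere differentiation of $\int_t^\infty f$ under only the weak integrability hypothesis, and keeping track of the behavior of the kernels at the endpoints $r=t$ and $r=s$ where they are singular.
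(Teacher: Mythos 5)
Your proof is correct. Note that the paper itself offers no proof of this statement---it is imported verbatim as Theorem~2.44 of Rubin's book---so there is no internal argument to compare against; your derivation (compose the two half-order integrals, justify the interchange by Tonelli using the bound $\frac{2}{t}\int_t^\infty\lvert f\rvert$, evaluate the inner kernel integral via $\int_a^b \frac{dw}{w\sqrt{(w-a)(b-w)}}=\pi/\sqrt{ab}$ to get $t\,\psi(t)=2\int_t^\infty f$, then apply Lebesgue differentiation) is the standard route and is complete, with the only caveat that the resulting identity holds for almost every $t>0$ rather than every $t$, which your write-up already acknowledges.
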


\begin{theorem}[\cite{RubFractional}, Corollary 2.52]\label{Rub252}
Let $m\in \mathbb{N}$ such that $n \geq 2$. Let $f$ be a function such that
\begin{align*}
\int_a^\infty |f(t)|t^{m-1}\,dt \qquad \text{ for all }a > 0.
\end{align*}
Then $f(t)$ can be uniquely reconstructed for almost all $t>0$ from the Chebyshev fractional integral $\Upsilon_-^m f= g$ by the formula
\begin{align*}
f(t)= -\frac{1}{2}\frac{d}{dt} \left( \overset{*}{\Upsilon}^m_- t^{-2}g\right)(t),
\end{align*}
where
\begin{align*}
\left( \overset{*}{\Upsilon}^m_- g \right)(t)=\frac{2t}{\sqrt{\pi}}\int_t^\infty\frac{g(r)T_m\left(\frac{r}{t} \right)}{r \, \sqrt{r^2-t^2}} dr.
\end{align*}
\end{theorem}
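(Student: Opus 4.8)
The plan is to mirror the order-$\tfrac12$ inversion of \Cref{Rub244}: I would exhibit the reconstruction operator as a \emph{right inverse up to one integration} of $\Upsilon_-^m$, so that the full composition reduces to an ordinary primitive of $f$, which the prefactor $-\tfrac12\,\frac{d}{dt}$ then undoes. The engine that produces the collapse is the Chebyshev orthogonality relation of \Cref{Cormack}; the role of the weights built into $\overset{*}{\Upsilon}^m_-$ and of the factor $r$ in \cref{right-sided} is precisely to arrange the composition kernel into the shape required by that identity.

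First I would substitute the definition \cref{right-sided} of $g=\Upsilon_-^m f$ into the reconstruction operator and expand everything as an iterated integral. Writing the inner variable of $g$ as $w$, a short simplification of the weights brings the expression to the form
\begin{align*}
\left( \overset{*}{\Upsilon}^m_- t^{-2} g\right)(t) = \frac{4t}{\pi} \int_t^\infty \int_r^\infty \frac{T_m\!\left(\tfrac{r}{t}\right) T_m\!\left(\tfrac{r}{w}\right) f(w)\, w}{r\,\sqrt{r^2-t^2}\,\sqrt{w^2-r^2}}\, dw\, dr ,
\end{align*}
where the two Chebyshev factors carry arguments $r/t\ge 1$ (the $\cosh$ branch) and $r/w\le 1$ (the $\cos$ branch), matching the branches in \Cref{Cormack}.

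Next I would interchange the order of integration over the triangular region $t\le r\le w<\infty$, so that $w$ runs over $(t,\infty)$ and $r$ over $(t,w)$. The inner integral in $r$ is then exactly the integral of \Cref{Cormack} with $z=t$ and upper endpoint $w$, and it evaluates to $\pi/(2tw)$. Inserting this value collapses the double integral to a single one,
\begin{align*}
\left( \overset{*}{\Upsilon}^m_- t^{-2} g\right)(t) = \frac{4t}{\pi}\int_t^\infty f(w)\,w\cdot\frac{\pi}{2\,t\,w}\,dw = 2\int_t^\infty f(w)\,dw .
\end{align*}
The hypothesis $\int_a^\infty |f(w)|\,w^{m-1}\,dw<\infty$ forces $f$ to be locally integrable on $(0,\infty)$, so the primitive $t\mapsto \int_t^\infty f$ is absolutely continuous on every $[a,\infty)$; the fundamental theorem of calculus then gives $-\tfrac12\frac{d}{dt}\,2\int_t^\infty f(w)\,dw = f(t)$ for almost every $t>0$, which is the asserted formula.

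The main obstacle is the justification of the interchange of integration. Since $T_m(r/t)$ grows like $(r/t)^m$ near the diagonal and for large $r$, absolute convergence of the double integral is not automatic, and this is exactly where the weighted hypothesis $\int_a^\infty|f(w)|\,w^{m-1}\,dw<\infty$—stronger than the bare convergence condition \cref{condition1} of \Cref{Rub246}—is needed. I would therefore establish a Tonelli bound first, isolating the locally integrable singularities $(r^2-t^2)^{-1/2}$ and $(w^2-r^2)^{-1/2}$, which remain harmless after the change of order, before invoking Fubini. Uniqueness then comes essentially for free: by \Cref{upsmenos} every element of the null space of $\Upsilon_-^m$ is a combination of the powers $t^{2k-m}$ with $0\le k\le M-1$, and none of these satisfies $\int_a^\infty |f|\,w^{m-1}\,dw<\infty$; hence the admissible reconstruction class meets the kernel only in $\{0\}$, so the recovered $f$ is the unique such function.
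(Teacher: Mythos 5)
This statement is imported verbatim from Rubin's book and the paper contains no proof of it, so there is nothing internal to compare against; your overall strategy---expand the composition, justify Fubini from the weighted integrability hypothesis, collapse the inner integral with \Cref{Cormack}, finish with the fundamental theorem of calculus, and get uniqueness from the null-space description of \Cref{upsmenos}---is exactly the standard route. The uniqueness step is fine: every kernel element $t^{2k-m}$, $0\le k\le M-1$, has $\int_a^\infty t^{2k-m}\,t^{m-1}\,dt=\int_a^\infty t^{2k-1}\,dt=\infty$, so the admissible class meets the kernel only in $\{0\}$ (you should also record that the hypothesis places $f$ in $L^1_{\chi^-_m}(a,\infty)$ so that \Cref{upsmenos} is applicable, which holds because $\chi^-_m(t)\le C_a\,t^{m-1}$ on $[a,\infty)$ when $m\ge 2$).

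The gap is in your first display. With the definition of $\overset{*}{\Upsilon}^m_-$ given in \Cref{Rub252}, whose kernel already carries the factor $1/\bigl(r\sqrt{r^2-t^2}\bigr)$, applying the operator to the function $r\mapsto r^{-2}g(r)$ puts $r^{3}$ in the denominator, not $r$: no ``simplification of the weights'' converts $r^{-2}\cdot r^{-1}$ into $r^{-1}$. With $r^{3}$ the inner integral after the interchange is $\int_t^w T_m(r/t)\,T_m(r/w)\,r^{-3}\bigl[(r^2-t^2)(w^2-r^2)\bigr]^{-1/2}\,dr$, which is \emph{not} the integral of \Cref{Cormack} and does not equal $\pi/(2tw)$ (already for $m=0$ it equals $\pi(t^2+w^2)/(4t^3w^3)$), so the double integral does not collapse to $2\int_t^\infty f$. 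What your computation actually establishes, correctly, is $\bigl(\overset{*}{\Upsilon}^m_- g\bigr)(t)=2\int_t^\infty f(w)\,dw$, i.e.\ $f=-\tfrac12\tfrac{d}{dt}\,\overset{*}{\Upsilon}^m_- g$ with no extra $t^{-2}$; this is also the version consistent with \Cref{Rub244} and the identity $\overset{*}{\Upsilon}^0_-=t\,I^{1/2}_{-,2}\,t^{-2}$. So either the displayed definition of $\overset{*}{\Upsilon}^m_-$ or the displayed inversion formula carries a spurious factor of $r^{\pm 2}$ relative to Rubin's original, and your proof must say which convention it is using rather than silently switching between the two. A smaller point: in the Tonelli bound, the crude chain $|T_m(r/t)|\le(2r/t)^m$ followed by $r^{m-1}\le w^{m-1}$ forces the stronger requirement $\int_a^\infty|f(w)|\,w^{m}\,dw<\infty$; to get by with the stated hypothesis you need the finer estimate $\int_t^w r^{m-1}\bigl[(r^2-t^2)(w^2-r^2)\bigr]^{-1/2}\,dr=O_t\bigl(w^{m-2}\bigr)$.
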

For the cases $m=0,1$, we first notice that
\begin{align*}
\Upsilon^0_- f &= I^{\frac{1}{2}}_{-,2} f\\
\Upsilon^1_- f &= t I^{\frac{1}{2}}_{-,2} t^{-1}f
\end{align*}
where $I^{1/2}_{-,2}$ is defined by \cref{Ithing} and $t>0$. We may then find a unique solution to the equation $\Upsilon_-^{m} f = g$ for $m=0,1$ using \Cref{Rub244}. 

\section{Properties of R}\label{sec:properties}
\subsection*{Continuity}

\begin{theorem}\label{coninuityofR}
$R:H^2_{pc}(\mathbb{S} \times \mathbb{R}) \rightarrow L^2(\mathbb{S}^2)$ is a continuous transform.
\end{theorem}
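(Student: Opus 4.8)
The plan is to exploit the linearity of $R$, so that continuity is equivalent to boundedness: it suffices to produce a constant $C>0$ with $\|Rf\|_{L^2(\mathbb{S}^2)} \le C\|f\|_{H^2}$ for every $f \in H^2_{pc}(\mathbb{S}\times\mathbb{R})$. The whole argument rests on the observation that $Rf(\theta,\rho)$ is, by \cref{Rparametricnopihalf}, an \emph{average} of values of $f$, so it should be dominated by the supremum norm of $f$, which in turn is controlled by the $H^2$ norm through the Sobolev embedding.

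First I would pass to the bounded continuous representative of $f$. Applying \Cref{SobolevM} (Part II of \Cref{SobolevR}) on the cylinder with $n=2$, $k=2$, $q=2$, and $r=0$, the hypothesis $(k-r)/n = 1 > 1/2 = 1/q$ holds, so $H^2(\mathbb{S}\times\mathbb{R})$ embeds continuously into $C^0_B(\mathbb{S}\times\mathbb{R})$. This yields a constant $C_1$ with $\|f\|_\infty \le C_1\|f\|_{H^2}$, and I would replace $f$ by this bounded continuous representative exactly as in the proof of \Cref{prop2.1}. Next, I would establish the pointwise bound: for every $(\theta,\rho)\in \Xi$,
\begin{align*}
|Rf(\theta,\rho)| \le \frac{1}{2\pi}\int_0^{2\pi} \left| f\!\left(e^{is},-\tan\rho\,\cos(\theta-s)\right)\right|\,ds \le \|f\|_\infty .
\end{align*}
The point to emphasize is that the $t$-argument $-\tan\rho\,\cos(\theta-s)$ becomes unbounded as $\rho\to\pi/2$, so a merely local bound on $f$ would not suffice; it is precisely the \emph{global} boundedness supplied by the Sobolev embedding that makes this estimate uniform in $(\theta,\rho)$.

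Finally, I would integrate over the sphere. Writing the surface measure in the coordinates $\zeta(\theta,\rho)$ as $\sin\rho\,d\rho\,d\theta$, and noting that the equator $\{\rho=\pi/2\}$ is a single curve of measure zero (so the limit values of $Rf$ defined there are irrelevant to the $L^2$ norm), I obtain
\begin{align*}
\|Rf\|_{L^2(\mathbb{S}^2)}^2 = \int_0^{2\pi}\!\!\int_0^{\pi} |Rf(\theta,\rho)|^2 \sin\rho \,d\rho\,d\theta \le \|f\|_\infty^2 \int_0^{2\pi}\!\!\int_0^{\pi}\sin\rho\,d\rho\,d\theta = 4\pi\,\|f\|_\infty^2 .
\end{align*}
Combining with the embedding estimate gives $\|Rf\|_{L^2(\mathbb{S}^2)} \le 2\sqrt{\pi}\,C_1\,\|f\|_{H^2}$, the desired bound.

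There is no serious analytic obstacle in this argument; the entire difficulty has been front-loaded into \Cref{SobolevM}, which converts the $H^2$ hypothesis into a uniform sup-norm bound. The only step I would be most careful about is confirming that the blow-up of the parameter $-\tan\rho\,\cos(\theta-s)$ near the equator does no harm — which is handled precisely because the sup-norm bound is global and therefore uniform across all $(\theta,\rho)$ — and that the single curve $\rho=\pi/2$ may be safely discarded as a null set when computing the $L^2$ norm on $\mathbb{S}^2$.
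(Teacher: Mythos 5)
Your argument is correct, but it is a genuinely different (and much shorter) route than the paper's. You convert the $H^2$ hypothesis into a uniform bound $\|f\|_\infty \le C_1\|f\|_{H^2}$ via the Sobolev embedding of \Cref{SobolevM}, observe that $Rf$ is an average and hence $|Rf|\le \|f\|_\infty$ pointwise (with the equator discarded as a null set), and integrate; every step checks out, including the applicability of Part II of \Cref{SobolevR} with $k=2$, $r=0$, $n=q=2$. The paper instead never invokes the sup-norm: it applies Jensen's inequality to $(R|f|)^2$ and then performs a chain of changes of variables ($\gamma=\theta-s+\pi$, $t=\tan\rho\cos\gamma$ in two different ways, using the splitting $1=\sin^2\gamma+\cos^2\gamma$) to arrive at the estimate $\|Rf\|_{L^2(\mathbb{S}^2)}\le \sqrt{2}\,\|f\|_{L^2(\mathbb{S}\times\mathbb{R})}$. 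The trade-off is real: the paper's harder computation buys a strictly stronger quantitative statement --- the operator norm is controlled by the $L^2$ norm of $f$ alone, with no derivatives, so $R$ is continuous even when $H^2_{pc}$ is given the weaker $L^2$ topology and could in principle be extended to rougher data --- whereas your bound $\|Rf\|_{L^2}\le 2\sqrt{\pi}\,C_1\|f\|_{H^2}$ genuinely uses the two derivatives through the embedding constant $C_1$ and does not survive if the smoothness assumption is relaxed. As a proof of the theorem exactly as stated, however, yours is complete and considerably more economical; the only points worth making explicit are the measurability of $Rf$ on $\mathbb{S}^2_0$ (immediate from dominated convergence once $f$ is replaced by its bounded continuous representative) and the fact that the limiting values on $\{\rho=\pi/2\}$ are irrelevant to the $L^2$ norm, both of which you already note.
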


\begin{proof}
Let $f \in H^2_{pc}(\mathbb{S} \times \mathbb{R}) $ and consider the norms 
\begin{align*}
\|f\|_2^2 &=\int_{\mathbb{S}\times \mathbb{R}} \left| f(x) \right|^2 dS(x)\\
&=\int_0^{2\pi} \int_{-\infty}^\infty \left| f(e^{is},t) \right|^2 dt \, ds\\
\end{align*}
and
\begin{align*}
\| Rf\|_2^2 &= \int_{\mathbb{S}^2} \left| Rf(\zeta)\right|^2 dS(\zeta)\\
&= 2 \int_0^{2\pi} \int_0^{\frac{\pi}{2}} \left| Rf(\theta,\rho)\right|^2 \sin \rho \,d\rho \,d\theta.
\end{align*}
Now,
\begin{align*}
\int_0^{2\pi} \int_0^{\frac{\pi}{2}} \left| Rf(\theta,\rho)\right|^2 \sin \rho\, d\rho \,d\theta
&\leq \int_0^{2\pi} \int_0^{\frac{\pi}{2}} \left( R|f|(\theta,\rho)\right)^2 \sin \rho \,d\rho \,d\theta.
\end{align*}
Using Jensen's inequality, we obtain that
\begin{align*}
 \left( R|f|(\theta,\rho)\right)^2 \leq \frac{1}{2\pi} \int_0^{2\pi} \left|f(e^{is},-\tan \rho \cos (\theta -s)) \right|^2 ds.
\end{align*} 
Therefore,
\begin{align*}
\int_0^{2\pi} \int_0^{\frac{\pi}{2}} \left| Rf(\theta,\rho)\right|^2 \sin \rho \, d\rho \,d\theta
&\leq \frac{1}{2\pi} \int_0^{2\pi} \int_0^{\frac{\pi}{2}} \int_0^{2\pi} \left| f(e^{is},-\tan \rho \cos (\theta -s))\right|^2 \sin \rho \, ds \,d\rho \,d\theta.
\end{align*}
Switching the order of integration, letting $\gamma = \theta -s + \pi$ with respect to $\theta$ and noticing that the integrand is $2\pi-$periodic with respect to the $\gamma$ variable, we obtain
\begin{align*}
\int_0^{2\pi} \int_0^{\frac{\pi}{2}} \left| Rf(\theta,\rho)\right|^2 \sin \rho \, d\rho \,d\theta
&\leq \frac{1}{2\pi} \int_0^{2\pi} \int_0^{\frac{\pi}{2}} \int_{0}^{2\pi} \left| f(e^{is},\tan \rho \cos \gamma)\right|^2 \sin \rho \, d\gamma \,d\rho \,ds\\
&= \frac{1}{2\pi} \int_0^{2\pi} \int_0^{\frac{\pi}{2}} \int_{0}^{\pi} \left| f(e^{is},\tan \rho \cos \gamma)\right|^2 \sin \rho \, d\gamma \,d\rho \,ds\\ &\quad + \frac{1}{2\pi} \int_0^{2\pi} \int_0^{\frac{\pi}{2}} \int_{\pi}^{2\pi} \left| f(e^{is},\tan \rho \cos \gamma)\right|^2 \sin \rho \, d\gamma \,d\rho \,ds\\
&= \frac{1}{2\pi} \int_0^{2\pi} \int_0^{\frac{\pi}{2}} \int_{0}^{\pi} \left| f(e^{is},\tan \rho \cos \gamma)\right|^2 \sin \rho \, d\gamma \,d\rho \,ds\\ &\quad +\frac{1}{2\pi} \int_0^{2\pi} \int_0^{\frac{\pi}{2}} \int_{0}^{\pi} \left| f(e^{is},-\tan \rho \cos \theta)\right|^2 \sin \rho \, d\theta \,d\rho \,ds.
\end{align*}
Noting that $f$ is even, changing the order of integration and letting $u = s+\pi$ with respect to the $s$ variable, using the fact that the integrand is $2\pi-$periodic with respect to the new variable $u$ and switching the order of integration back again we obtain that
\begin{align*}
&\int_0^{2\pi} \int_0^{\frac{\pi}{2}} \left| Rf(\theta,\rho)\right|^2 \sin \rho \, d\rho \,d\theta\\
&\qquad \leq \frac{1}{\pi} \int_0^{2\pi} \int_0^{\frac{\pi}{2}} \int_{0}^{\pi} \left| f(e^{is},\tan \rho \cos \gamma)\right|^2 \sin \rho \, d\gamma \,d\rho \,ds\\
&\qquad = \frac{1}{\pi}  \int_0^{2\pi} \int_0^{\frac{\pi}{2}} \int_{0}^{\pi} \left| f(e^{is},\tan \rho \cos \gamma)\right|^2 \sin \rho \, (\sin^2\gamma + \cos^2 \gamma)\, d\gamma \,d\rho \,ds\\
&\qquad = \frac{1}{\pi}  \int_0^{2\pi} \int_0^{\frac{\pi}{2}} \int_{0}^{\pi} \left| f(e^{is},\tan \rho \cos \gamma)\right|^2 \sin \rho \,\sin^2\gamma \, d\gamma \,d\rho \,ds\\ 
&\qquad \quad + \frac{1}{\pi}  \int_0^{2\pi} \int_0^{\frac{\pi}{2}} \int_{0}^{\pi} \left| f(e^{is},\tan \rho \cos \gamma)\right|^2 \sin \rho \, \cos^2 \gamma \, d\gamma \,d\rho \,ds\\
&\qquad \leq \frac{1}{\pi}  \int_0^{2\pi} \int_0^{\frac{\pi}{2}} \int_{0}^{\pi} \left| f(e^{is},\tan \rho \cos \gamma)\right|^2 \sin \rho \,\sin\gamma \, d\gamma \,d\rho \,ds\\ 
&\qquad \quad+ \frac{1}{\pi}  \int_0^{2\pi} \int_0^{\frac{\pi}{2}} \int_{0}^{\pi} \left| f(e^{is},\tan \rho \cos \gamma)\right|^2 \sin \rho \, |\cos \gamma| \, d\gamma \,d\rho \,ds.
\end{align*}

On the one hand,
\begin{align*}
&\frac{1}{\pi} \int_0^{2\pi} \int_0^{\frac{\pi}{2}} \int_{0}^{\pi} \left| f(e^{is},\tan \rho \cos \gamma)\right|^2 \sin \rho \,\sin\gamma \, d\gamma \,d\rho \,ds\\
 &\qquad = \frac{1}{\pi}  \int_0^{2\pi} \int_0^{\frac{\pi}{2}} \int_{0}^{\pi} \left| f(e^{is},\tan \rho \cos \gamma)\right|^2 \cos \rho \tan \rho \,\sin\gamma \, d\gamma \,d\rho \,ds\\
&\qquad \leq  \frac{1}{\pi}  \int_0^{2\pi} \int_0^{\frac{\pi}{2}} \int_{0}^{\pi} \left| f(e^{is},\tan \rho \cos \gamma)\right|^2 \tan \rho \,\sin\gamma \, d\gamma \,d\rho \,ds\\
&\qquad = \frac{1}{\pi}  \int_0^{2\pi} \int_0^{\frac{\pi}{2}} \int_{-\tan\rho}^{\tan\rho} \left| f(e^{is},t)\right|^2  \, dt \,d\rho \,ds,
\end{align*}
by letting $t=\tan \rho \cos \gamma$ with respect to the $\gamma$ variable. Therefore,
\begin{align*}
&\frac{1}{\pi}  \int_0^{2\pi} \int_0^{\frac{\pi}{2}} \int_{0}^{\pi} \left| f(e^{is},\tan \rho \cos \gamma)\right|^2 \sin \rho \,\sin\gamma \, d\gamma \,d\rho \,ds\\
&\qquad \leq \frac{1}{\pi}  \int_0^{2\pi} \int_0^{\frac{\pi}{2}} \int_{-\infty}^{\infty} \left| f(e^{is},t)\right|^2  \, dt \,d\rho \,ds\\
&\qquad =  \frac{1}{2}  \int_0^{2\pi} \int_{-\infty}^{\infty} \left| f(e^{is},t)\right|^2  \, dt \,ds.
\end{align*}

Consequently,
\begin{align*}
\frac{1}{\pi}  \int_0^{2\pi} \int_0^{\frac{\pi}{2}} \int_{0}^{\pi} \left| f(e^{is},\tan \rho \cos \gamma)\right|^2 \sin \rho \,\sin\gamma \, d\gamma \,d\rho \,ds &\leq \frac{1}{2}  \int_0^{2\pi} \int_{-\infty}^{\infty} \left| f(e^{is},t)\right|^2  \, dt \,ds.
\end{align*}

On the other hand, 
\begin{align*}
&\frac{1}{\pi}  \int_0^{2\pi} \int_0^{\frac{\pi}{2}} \int_{0}^{\pi} \left| f(e^{is},\tan \rho \cos \gamma)\right|^2 \sin \rho \, |\cos \gamma| \, d\gamma \,d\rho \,ds\\
&\qquad\leq \frac{1}{\pi}  \int_0^{2\pi} \int_0^{\frac{\pi}{2}} \int_{0}^{\pi} \left| f(e^{is},\tan \rho \cos \gamma)\right|^2 \, |\cos \gamma| \, d\gamma \,d\rho \,ds\\
&\qquad= \frac{1}{\pi}  \int_0^{2\pi} \int_0^{\frac{\pi}{2}} \int_{0}^{\frac{\pi}{2}} \left| f(e^{is},\tan \rho \cos \gamma)\right|^2 \,\cos\gamma \, d\gamma \,d\rho \,ds\\
&\qquad \quad - \frac{1}{\pi}  \int_0^{2\pi} \int_0^{\frac{\pi}{2}} \int_{\frac{\pi}{2}}^{\pi} \left| f(e^{is},\tan \rho \cos \gamma)\right|^2  \,\cos\gamma \, d\gamma \,d\rho \,ds\\
&\qquad=\frac{1}{\pi}  \int_0^{2\pi} \int_{0}^{\frac{\pi}{2}} \int_0^{\frac{\pi}{2}}  \left| f(e^{is},\tan \rho \cos \gamma)\right|^2  \,\cos\gamma \,d\rho \, d\gamma  \,ds\\
&\qquad \quad - \frac{1}{\pi}  \int_0^{2\pi} \int_{\frac{\pi}{2}}^{\pi} \int_0^{\frac{\pi}{2}}  \left| f(e^{is},\tan \rho \cos \gamma)\right|^2  \,\cos\gamma \,d\rho \, d\gamma  \,ds.
\end{align*} 

Let the change of variables $t = \tan \rho \cos \gamma$ with respect to $\rho$, then
\begin{align*}
&\frac{1}{\pi}  \int_0^{2\pi} \int_0^{\frac{\pi}{2}} \int_{0}^{\pi} \left| f(e^{is},\tan \rho \cos \gamma)\right|^2 \sin \rho \, |\cos \gamma| \, d\gamma \,d\rho \,ds\\
&\qquad \leq \frac{1}{\pi}  \int_{0}^{2\pi} \int_0^{\frac{\pi}{2}} \int_0^\infty \left| f(e^{is},t)\right|^2 \frac{\cos^2 \gamma}{\cos^2 \gamma +t^2} \, dt\, d\gamma \,ds\\
&\qquad \quad + \frac{1}{\pi}  \int_{0}^{2\pi} \int_{\frac{\pi}{2}}^{\pi} \int_{-\infty}^0 \left| f(e^{is},t)\right|^2 \frac{\cos^2 \gamma}{\cos^2 \gamma +t^2} \, dt\, d\gamma \,ds\\
&\qquad \leq \frac{1}{\pi}  \int_{0}^{2\pi} \int_0^{\frac{\pi}{2}} \int_0^\infty \left| f(e^{is},t)\right|^2  \, dt\, d\gamma \,ds\\
&\qquad \quad + \frac{1}{\pi}  \int_{0}^{2\pi} \int_{\frac{\pi}{2}}^{\pi} \int_{-\infty}^0 \left| f(e^{is},t)\right|^2 \, dt\, d\gamma \,ds\\
&\qquad = \frac{1}{2} \int_{0}^{2\pi} \int_{-\infty}^\infty \left| f(e^{is},t)\right|^2 \, dt\, ds.
\end{align*} 
Consequently,
\begin{align*}
\frac{1}{\pi}  \int_0^{2\pi} \int_0^{\frac{\pi}{2}} \int_{0}^{\pi} \left| f(e^{is},\tan \rho \cos \gamma)\right|^2 \sin \rho \, |\cos \gamma| \, d\gamma \,d\rho \,ds
&\leq  \frac{1}{2} \int_{0}^{2\pi} \int_{-\infty}^\infty \left| f(e^{is},t)\right|^2 \, dt\, ds.
\end{align*}
Therefore, putting both bounds together, we obtain 
\begin{align*}
\int_0^{2\pi} \int_0^{\frac{\pi}{2}} \left| Rf(\theta,\rho)\right|^2 \sin \rho \, d\rho \,d\theta
&\leq \int_{0}^{2\pi} \int_{-\infty}^\infty \left| f(e^{is},t)\right|^2 \, dt\, ds,
\end{align*}
or
\begin{align*}
\left \| Rf \right \|_2 \leq \sqrt{2} \left\| f \right\|_2 
\end{align*}
and therefore, $R$ is continuous from $H^2_{pc}(\mathbb{S}\times \mathbb{R})$ to $L^2(\mathbb{S}^2)$.
\end{proof}

\subsection*{Null Space, Injectivity of R and an Inversion Formula}

The transform $R$, just like the Funk transform, vanishes for all odd functions with respect to the equator. We will show that a necessary condition for a function space to have a non-trivial null space is that functions may not be bounded at infinity. To prove this we will make use of the following result.
\begin{lemma}\label{Gnlemma}
Let $f \in H^2_{loc}(\mathbb{S}\times \mathbb{R})$ and $f(e^{is},t)=\sum_{n\in\mathbb{Z}}F_n(t)e^{ins}$ its Fourier series. Let $Rf(\theta,\rho)=\sum_{n\in\mathbb{Z}}G_n(\rho)e^{in\theta}$ be the Fourier series of $Rf$ on $\Xi$. Then
\begin{align}
G_n(\arctan x)= \frac{2(-1)^{|n|} }{\pi} \int_{0}^{x}\frac{F_n(v)T_{|n|}\left( \frac{v}{x}\right)}{\sqrt{x^2-v^2}} dv
\end{align} 
where $x=\tan \rho$ and $T_{|n|}$ is the $|n|^{th}$ Chebyshev polynomial of the first kind.
\end{lemma}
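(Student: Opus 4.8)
The plan is to pass to Fourier series in the angular variable and match coefficients. I would substitute $f(e^{is},t)=\sum_{m}F_m(t)e^{ims}$ into the integral representation \cref{Rparametricnopihalf}, extract the $n$-th coefficient $G_n(\rho)=\frac{1}{2\pi}\int_0^{2\pi}Rf(\theta,\rho)e^{-in\theta}\,d\theta$, and introduce the variable $u=\theta-s$, performing the $s$-integration first. Since $\frac{1}{2\pi}\int_0^{2\pi}e^{i(m-n)s}\,ds=\delta_{mn}$, only the $m=n$ term survives, collapsing the double integral to
\[
G_n(\arctan x)=\frac{1}{2\pi}\int_0^{2\pi}F_n(-x\cos u)\,e^{-inu}\,du,\qquad x=\tan\rho.
\]
The interchange of summation and integration is a routine point secured by the $H^2$ regularity of $f$ (the series converges in a suitable sense and $F_n$ is continuous by Sobolev embedding), so I would only remark on it rather than belabor it.

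Next I would use the symmetry of the integrand about $u=\pi$. Because $F_n(-x\cos u)$ is invariant under $u\mapsto 2\pi-u$ while $\sin(nu)$ is anti-invariant, the imaginary part of $e^{-inu}$ integrates to zero and the range folds as $\int_0^{2\pi}=2\int_0^{\pi}$. Writing $\cos(nu)=\cos(|n|u)=T_{|n|}(\cos u)$ gives
\[
G_n(\arctan x)=\frac{1}{\pi}\int_0^{\pi}F_n(-x\cos u)\,T_{|n|}(\cos u)\,du.
\]
The substitution $v=-x\cos u$ is monotone on $[0,\pi]$ with $du=dv/\sqrt{x^2-v^2}$ and $T_{|n|}(\cos u)=T_{|n|}(-v/x)=(-1)^{|n|}T_{|n|}(v/x)$, which transforms this into
\[
G_n(\arctan x)=\frac{(-1)^{|n|}}{\pi}\int_{-x}^{x}\frac{F_n(v)\,T_{|n|}(v/x)}{\sqrt{x^2-v^2}}\,dv.
\]

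Finally I would fold $[-x,x]$ onto $[0,x]$, and this is where the parity of $f$ enters. As observed in the proof of \Cref{prop2.1}, $R$ annihilates odd functions, so one may assume $f$ is even on $\mathbb{S}\times\mathbb{R}$, meaning even in the sense of $\mathbb{R}^3$ (antipodal symmetry $f(-\,\cdot\,)=f(\cdot)$), i.e. $f(e^{i(s+\pi)},-t)=f(e^{is},t)$; in coordinates this is precisely the coefficient identity $F_n(-v)=(-1)^{|n|}F_n(v)$. Under it the integrand $F_n(v)T_{|n|}(v/x)/\sqrt{x^2-v^2}$ is even in $v$, so $\int_{-x}^{x}=2\int_0^{x}$ and the factor $2$ appears, yielding the asserted formula. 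The main obstacle is exactly this last sign bookkeeping: one must fix the correct notion of ``even'' (antipodal symmetry, equivalently $F_n(-v)=(-1)^{|n|}F_n(v)$, rather than evenness in $t$ alone), because it is this parity that makes the $(-1)^{|n|}$ produced by the Chebyshev substitution cancel against the reflection $v\mapsto-v$ and render the integrand even; the naive notion of parity would instead kill the odd-$n$ coefficients and destroy the formula.
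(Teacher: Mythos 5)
Your proposal is correct and follows essentially the same route as the paper's proof: extract $G_n$ by orthogonality after the shift $u=\theta-s$, substitute $v=-x\cos u$, use $T_{|n|}(-v/x)=(-1)^{|n|}T_{|n|}(v/x)$, and invoke the antipodal evenness $F_n(-v)=(-1)^{|n|}F_n(v)$ to fold $\int_{-x}^{x}$ onto $2\int_0^x$; your emphasis on the correct notion of ``even'' is exactly the point the paper also relies on. The only (cosmetic) difference is the order of operations: you discard the $\sin(nu)$ part via the reflection $u\mapsto 2\pi-u$ before substituting, whereas the paper first folds $[0,2\pi]$ to $[0,\pi]$ using the parity of $F_n$ and kills the sine term after the substitution by an oddness-in-$v$ argument.
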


\begin{proof}
Let $f\in H^2_{loc}(\mathbb{S}\times\mathbb{R})$ and $(\theta,\rho)\in \Xi$. Since $f$ is in $H^2(E_{\zeta(\theta,\rho)})$, by \Cref{SobolevM} $f$ is equal almost everywhere to a continuous and bounded function on $E_{\zeta(\theta,\rho)}$. Without loss of generality, assume that $f$ is such representative. Since $R$ annihilates odd functions, assume that $f$ is an even function. Consequently, $Rf$ is an even function on $\Xi$ in the sense that $Rf(\theta,\rho)=Rf(\theta+\pi,\pi-\rho)$. Therefore, assume, without loss of generality, that $0 \leq \rho < \pi/2$.

We parametrize the set $E_{\zeta(\theta,\rho)}$ as in \Cref{Sec:Transform} so that
\begin{align*}
Rf(\theta,\rho)&= \frac{1}{2\pi} \int_{E_{\zeta(\theta,\rho)}} f(x)d\sigma(x)\\
&=\frac{1}{2\pi}\int_{0}^{2\pi}f(e^{is},-\tan\rho\cos(\theta - s))ds.
\end{align*}
 
Let $G_n(\rho)$ be the the $n^{th}$ Fourier coefficient of $Rf$. Then
\begin{align*}
G_n(\rho) &= \frac{1}{2\pi}\int_0^{2\pi} Rf(\theta,\rho)e^{-in\theta}d\theta\\
&=\frac{1}{4\pi^2}\int_0^{2\pi} \int_0^{2\pi} f(e^{is},-\tan \rho \cos(\theta - s))e^{-in\theta}ds\,d\theta.
\end{align*}
Letting $u=s-\theta$, using the fact that the integrand is $2\pi-$periodic with respect to $u$, switching the order of integration, since $f$ is bounded on $E_{\zeta(\theta,\rho)}$, and letting $s= \theta + u$ we obtain
\begin{align*}
G_n(\rho)&=\frac{1}{2\pi} \int_{0}^{2\pi}F_n(-\tan\rho\cos u)e^{-inu} du\\
&= \frac{1}{2\pi} \left( \int_{0}^{\pi}F_n(-\tan\rho\cos u)e^{-inu} du +\int_{\pi}^{2\pi}F_n(-\tan\rho\cos u)e^{-inu} du\right)\\
&=\frac{1}{2\pi} \left( \int_{0}^{\pi}F_n(-\tan\rho\cos u)e^{-inu} du +\int_{0}^{\pi}F_n(-\tan\rho\cos(v+\pi))e^{-in(v+\pi))} dv \right)\\
&= \frac{1}{2\pi} \left( \int_{0}^{\pi}F_n(-\tan\rho\cos u)e^{-inu} du +\int_{0}^{\pi} (-1)^{|n|}F_n(\tan\rho\cos v)e^{-inv} dv\right).\\
\end{align*} By hypothesis $f$ is an even function on $\mathbb{S}\times \mathbb{R}$, therefore $F_n(t)=(-1)^{|n|}F_n(-t)$ and
\begin{align*}
G_n(\rho)&=\frac{1}{\pi} \int_{0}^{\pi}F_n(-\tan\rho\cos u)e^{-inu} du.
\end{align*} 

The change of variables $v = -\tan\rho\cos u$ yields
\begin{align*}
G_n(\rho)&=\frac{1}{\pi}\int_{0}^{\pi}F_n(-\tan\rho\cos u)e^{-inu} du\\
&=\frac{1}{\pi}\int_{-\tan\rho}^{\tan\rho}F_n(v)e^{-in \arccos\left( \frac{-v}{\tan\rho}\right)}\frac{1}{\sqrt{\tan^2\rho -v^2}} dv\\
&=\frac{1}{\pi}\int_{-\tan\rho}^{\tan\rho}F_n(v)e^{-in\left(\pi - \arccos\left( \frac{v}{\tan\rho}\right)\right)}\frac{1}{\sqrt{\tan^2\rho -v^2}} dv\\
&=\frac{(-1)^{|n|} }{\pi}\int_{-\tan\rho}^{\tan\rho}F_n(v)e^{in\arccos\left( \frac{v}{\tan\rho}\right)}\frac{1}{\sqrt{\tan^2\rho -v^2}} dv.
\end{align*}Using Euler's formula and the fact that   $F_n(v)\sin(n \arccos(v/\tan \rho))/\sqrt{\tan^2\rho -v^2}$ is an odd function of $v$ yields
\begin{align*}
G_n(\rho)& =\frac{(-1)^{|n|} }{\pi} \int_{-\tan\rho}^{\tan\rho}F_n(v)T_{|n|}\left( \frac{v}{\tan\rho}\right)\frac{1}{\sqrt{\tan^2\rho -v^2}} dv\\
&=
\frac{2(-1)^{|n|} }{\pi} \int_{0}^{\tan\rho}F_n(v)T_{|n|}\left( \frac{v}{\tan\rho}\right)\frac{1}{\sqrt{\tan^2\rho -v^2}} dv,
\end{align*}
where $T_{|n|}$ is the $|n|^{th}$ Chebyshev polynomial. 

Finally, letting $x=\tan\rho$
\begin{align}
G_n(\arctan x)= \frac{2(-1)^{|n|} }{\pi} \int_{0}^{x}\frac{F_n(v)T_{|n|}\left( \frac{v}{x}\right)}{\sqrt{x^2-v^2}} dv.
\end{align} 
\end{proof}

We obtain the following null space characterization when we define $R$ on the set of continuous functions on the cylinder.

\begin{theorem}
Let $R:C(\mathbb{S} \times \mathbb{R})  \rightarrow C(\mathbb{S}^2_0)$. Then the null space of $R$ consists of all odd functions in $C(\mathbb{S} \times \mathbb{R})$ and those that are even and are such that their Fourier series $f(e^{is},t)=\sum_{n\in \mathbb{Z}} F_n(t)e^{ins}$ have polynomial coefficients
\begin{align*}
F_n(t)=0
\end{align*} for $|n|< 2 $,
and
\begin{align*}
F_n(t) \eqae \sum_{m=1}^{\lfloor \frac{|n|}{2} \rfloor} a_m^n t^{|n|-2m}
\end{align*}
for $|n|\geq2$, where $a_m^n \in \mathbb{C}$.
\end{theorem}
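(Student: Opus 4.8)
The plan is to reduce the problem to the injectivity and null-space properties of the left-sided Chebyshev fractional integral $\Upsilon_+^m$ via \Cref{Gnlemma}. First I would dispose of the odd part: writing $f = f_e + f_o$ with $f_o$ odd and $f_e$ even, the fact that $R$ annihilates odd functions gives $Rf = Rf_e$, so $f \in \ker R$ if and only if $f_e \in \ker R$. This reduces the characterization to even functions and accounts for the ``all odd functions'' summand in the statement.

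Next, for even $f$ with Fourier series $\sum_n F_n(t)e^{ins}$, I would take the $n$-th Fourier coefficient formula from \Cref{Gnlemma},
\begin{align*}
G_n(\arctan x) = \frac{2(-1)^{|n|}}{\pi}\int_0^x \frac{F_n(v)T_{|n|}(v/x)}{\sqrt{x^2-v^2}}\,dv,
\end{align*}
and compare it directly against the definition \cref{left-sided} of $\Upsilon_+^{|n|}$. The two integrals coincide up to a constant, giving $G_n(\arctan x) = \tfrac{(-1)^{|n|}}{\sqrt{\pi}}\,(\Upsilon_+^{|n|}F_n)(x)$. Since $Rf \in C(\mathbb{S}^2_0)$ and its Fourier series on $\Xi$ is $\sum_n G_n(\rho)e^{in\theta}$, vanishing of $Rf$ is equivalent to $G_n(\rho)=0$ for every $n$ and every $\rho \in [0,\pi/2)$; because $\arctan$ maps $[0,\pi/2)$ bijectively onto $[0,\infty)$ and the prefactor is nonzero, this is in turn equivalent to $(\Upsilon_+^{|n|}F_n)(x)=0$ for all $x>0$ and all $n$.

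With this equivalence in hand, I would apply the fractional-integral lemmas coefficient by coefficient, after checking the admissibility hypotheses: each $F_n$ is continuous, hence bounded on bounded intervals, and the weights $\chi_m^+$ (either $1$ or $t(1+|\log t|)$) are locally integrable near the origin, so $F_n \in L^1_{\chi^+_{|n|}}(0,\infty)$ and \cref{condition2} holds for every $b$. For $|n|=0,1$, \Cref{upsmasinj} gives injectivity of $\Upsilon_+^{|n|}$, forcing $F_n \equiv 0$. For $|n|\geq 2$, \Cref{upsmas} with $m=|n|$, $M=\lfloor |n|/2\rfloor$, and $b=\infty$ yields $F_n(t) = \sum_{j=1}^M c_j t^{|n|-2j}$ a.e.; continuity of $F_n$ upgrades this to an everywhere identity, and relabeling $j \mapsto m$, $c_j \mapsto a_m^n$ produces the claimed form. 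The converse inclusion follows by reversing the chain, since these monomials lie in $\ker \Upsilon_+^{|n|}$ and hence give $G_n \equiv 0$.

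The main obstacle I anticipate is not the algebra but the bookkeeping needed to legitimately invoke \Cref{upsmas} and \Cref{upsmasinj}: one must confirm that continuous Fourier coefficients genuinely belong to the weighted classes $L^1_{\chi^+_m}$ and satisfy \cref{condition2} on every bounded interval, and that ``for almost all $x$'' in those lemmas can be promoted to ``for all $x$'' using continuity before matching coefficients. A secondary point to handle with care is that vanishing of the continuous function $Rf$ on $\mathbb{S}^2_0$ really does force every coefficient $G_n$ to vanish identically, which uses completeness of the exponential system together with the even symmetry $Rf(\theta,\rho)=Rf(\theta+\pi,\pi-\rho)$ that lets the upper hemisphere determine all of $\mathbb{S}^2_0$.
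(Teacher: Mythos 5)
Your proposal is correct and follows essentially the same route as the paper: reduce to even $f$, use \Cref{Gnlemma} to identify $G_n(\arctan x)$ with $\tfrac{(-1)^{|n|}}{\sqrt{\pi}}(\Upsilon_+^{|n|}F_n)(x)$, then apply \Cref{upsmasinj} for $|n|<2$ and \Cref{upsmas} for $|n|\geq 2$. Your added care in verifying that continuous $F_n$ lie in $L^1_{\chi^+_{|n|}}$ and in stating the converse inclusion only makes explicit what the paper leaves implicit.
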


\begin{proof}

Let $(\theta,\rho)\in \Xi$, $x=\tan\rho$ and $f \in C(\mathbb{S} \times \mathbb{R})$. Consider the Fourier series $Rf(\theta,\rho) = \sum_{n\in \mathbb{Z}}G(\rho)e^{in\theta}$. Since $R$ annihilates odd functions, assume, without loss of generality, that $f$ is an even function. Hence, $Rf$ is an even function on $\Xi$ in the sense that $Rf(\theta,\rho)=Rf(\theta+\pi,\pi-\rho)$. Therefore, assume, without loss of generality, that $0 \leq \rho < \pi/2$ so that $x>0$. 

By \Cref{Gnlemma}
\begin{align*}
G_n(\arctan x)= \frac{2(-1)^{|n|} }{\pi} \int_{0}^{x}\frac{F_n(u)T_{|n|}\left( \frac{u}{x}\right)}{\sqrt{x^2-u^2}} du.
\end{align*}

The right hand-side of the last equation may be expressed as a left-sided Chebyshev fractional integral as in \Cref{upsmas} yielding
\begin{align*}
G_n(\arctan x)=\frac{(-1)^{|n|}}{\sqrt{\pi}}\left( \Upsilon_+^{|n|} F_n\right)(x).
\end{align*} 
Therefore,
\begin{align*}
Rf(\theta,\rho)=0 &\iff \forall n \in \mathbb{Z} \quad G_n(\arctan x) =0.
\end{align*}

By \Cref{upsmasinj}, $\Upsilon^{|n|}_+$ is injective for $|n|<2$. So for $|n|<2$, $G_n(\arctan x)= 0$ if and only if $F_n\equiv 0$. For $|n|\geq 2$, by \Cref{upsmas}
\begin{align*}
F_n(t) \eqae \sum_{m=1}^{\lfloor \frac{|n|}{2} \rfloor} a_m^n t^{|n|-2m}
\end{align*}
for some $a_m^n \in \mathbb{C}$.
\end{proof}

\begin{corollary}
The restriction of $R$ to the space of even functions in $H^2_{pc}$ is injective.
\end{corollary}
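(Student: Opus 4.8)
The plan is to leverage the null space characterization just established, together with the fact that membership in $H^2_{pc}$ forces each Fourier coefficient $F_n$ to be square integrable in $t$, a condition incompatible with the polynomial coefficients permitted in the null space.

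First I would take $f$ an even function in $H^2_{pc}(\mathbb{S}\times\mathbb{R})$ with $Rf=0$, and by linearity reduce injectivity to showing $f\equiv 0$. By the Sobolev embedding (\Cref{SobolevM}), $f$ agrees almost everywhere with a bounded continuous function, so I may regard $f$ as an element of $C(\mathbb{S}\times\mathbb{R})$. Since $f$ is continuous and bounded, $Rf$ is continuous on $\mathbb{S}^2_0$; the hypothesis $Rf=0$ in $L^2(\mathbb{S}^2)$ then forces $Rf=0$ identically on $\mathbb{S}^2_0$, so the preceding null space theorem applies to this representative.

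Next, applying the null space characterization, I obtain that the Fourier coefficients of $f$ satisfy $F_n\equiv 0$ for $|n|<2$ and $F_n(t)=\sum_{m=1}^{\lfloor|n|/2\rfloor} a_m^n\, t^{|n|-2m}$ almost everywhere for $|n|\ge 2$. The decisive step is to observe that since $f\in L^2(\mathbb{S}\times\mathbb{R})$, Parseval's identity in the $s$ variable together with Fubini gives $\sum_{n}\int_{\mathbb{R}}|F_n(t)|^2\,dt<\infty$, so each $F_n$ belongs to $L^2(\mathbb{R})$. But for $|n|\ge 2$ the exponents $|n|-2m$ are nonnegative integers, so $F_n$ is almost everywhere equal to a genuine polynomial; a nonzero polynomial grows without bound and is never square integrable over $\mathbb{R}$. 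Hence every coefficient $a_m^n$ must vanish, giving $F_n\equiv 0$ for all $n$, and therefore $f\equiv 0$.

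I expect the main obstacle to be the bookkeeping at the interface between the two function classes: verifying that the continuous representative guaranteed by the Sobolev embedding is genuinely the object to which the $C(\mathbb{S}\times\mathbb{R})$ null space theorem applies, and that $Rf=0$ in the $L^2(\mathbb{S}^2)$ sense upgrades to pointwise vanishing on $\mathbb{S}^2_0$. The analytic heart — that the admissible polynomial coefficients are excluded by square integrability — is short once this is in place, and it is worth emphasizing that it is the $L^2$ decay, rather than mere boundedness or convergence to a periodic limit at infinity, that kills even the constant coefficient arising for even $|n|$.
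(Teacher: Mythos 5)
Your proof is correct, and its overall architecture matches the paper's: reduce to the continuous representative via the Sobolev embedding, invoke the null space characterization for $R$ on $C(\mathbb{S}\times\mathbb{R})$, and then rule out the admissible polynomial Fourier coefficients using a global property that membership in $H^2_{pc}$ forces on each $F_n$. The difference lies in which property does the killing. The paper argues that the bounded continuous representative has \emph{bounded} Fourier coefficients, whereas the null space polynomials are ``therefore unbounded''; you instead use Parseval in the $s$ variable to get $F_n\in L^2(\mathbb{R})$ and observe that no nonzero polynomial is square integrable on a half-line. Your variant is the sharper one: for even $|n|\ge 2$ the sum $\sum_{m=1}^{\lfloor|n|/2\rfloor}a_m^n t^{|n|-2m}$ contains the exponent $|n|-2\cdot\frac{|n|}{2}=0$, i.e.\ a constant term, which is perfectly bounded and so is not excluded by the boundedness argument alone, but is excluded by $L^2$ decay --- exactly the point you flag at the end. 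Your preliminary step upgrading $Rf=0$ in $L^2(\mathbb{S}^2)$ to pointwise vanishing on $\mathbb{S}^2_0$ via continuity of $Rf$ is also a piece of bookkeeping the paper leaves implicit. In short, the proposal is correct, follows the same route, and in fact patches a small soft spot in the paper's own closing step.
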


\begin{proof}
All even functions $f \in H^2_{pc}(\mathbb{S}\times\mathbb{R})$ admit a bounded continuous representative by \Cref{SobolevM}. Therefore, $f$ has bounded Fourier coefficients $ F_n \in  L^1_{\chi^+_{|n|}}(0,\infty)$ for all $n \in \mathbb{Z}$. Since the Fourier coefficients of functions in the nontrivial null space of $R$ are polynomials, therefore unbounded, this show that $R$ restricted to all even functions in $H^2_{pc}(\mathbb{S}\times\mathbb{R})$ has a trivial null space. 
\end{proof}

\begin{theorem}\label{med}
Let $f \in H^2_{loc}(\mathbb{S}\times \mathbb{R})$ and the Fourier series of $f$ on the cylinder be given by $ f(e^{is},t)= \sum_{n\in \mathbb{Z}}F_n(t)e^{ins}$. Assume for some $\epsilon>0$, there are positive constants $C_k$, $k=0,1,...$ such that
\begin{align}
|F_n(t)| \label{Fcoefs}\leq \begin{cases}  C_0 &\quad \text{ if } n=0, \\
C_n |t|^{|n|-1} &\quad \text{ if } n \in \mathbb{Z}\setminus \{0\},  \end{cases} 
\end{align}
whenever $|t|<\epsilon$. Then $f$ can be recovered almost everywhere from the averages
\begin{align*}
Rf(\theta,\rho)=\frac{1}{2\pi}\int_{0}^{2\pi} f(e^{is},-\tan \rho \cos(\theta-s))ds
\end{align*}
over all $\,(\theta,\rho) \in \Xi$. Moreover, for almost all $(e^{is},t)\in \mathbb{S}\times \mathbb{R} $ the function $f$ is given by
\begin{align*}
f(e^{is},t) = \sum_{n\in\mathbb{Z}}\left( (-1)^{|n|}\frac{d}{dt}\int_{0}^{t}\frac{G_n(\arctan x)T_{|n|}\left( \frac{t}{x}\right)x}{\sqrt{t^2 -x^2}}dx \right)e^{ins},
\end{align*}
where $G_n$ is the $n^{th}$ Fourier coefficient of $Rf$ as in \Cref{Gnlemma}.
\end{theorem}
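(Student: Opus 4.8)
The plan is to reduce the statement to a one-dimensional Chebyshev integral equation on each Fourier mode and invert that equation using Cormack's orthogonality relation, \Cref{Cormack}. As in the earlier proofs, since $R$ annihilates antipodally odd functions I would first reduce to the case where $f$ is even, so that its coefficients obey $F_n(t)=(-1)^{|n|}F_n(-t)$; it then suffices to recover $F_n$ on $(0,\infty)$ and extend by this parity, so I assume $t>0$ and write $x=\tan\rho>0$. By \Cref{Gnlemma} the coefficients of $Rf$ satisfy
\begin{align*}
G_n(\arctan x)=\frac{2(-1)^{|n|}}{\pi}\int_0^x \frac{F_n(v)\,T_{|n|}(v/x)}{\sqrt{x^2-v^2}}\,dv,
\end{align*}
so the theorem amounts to an explicit inversion of this left-sided Chebyshev integral through the operator $I_n(t):=\int_0^t G_n(\arctan x)\,T_{|n|}(t/x)\,x\,(t^2-x^2)^{-1/2}\,dx$ appearing in the statement.

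The computational core is to insert the displayed formula for $G_n(\arctan x)$ into $I_n(t)$ and interchange the order of integration over the triangle $0<v<x<t$, obtaining
\begin{align*}
I_n(t)=\frac{2(-1)^{|n|}}{\pi}\int_0^t F_n(v)\left[\int_v^t \frac{T_{|n|}(v/x)\,T_{|n|}(t/x)\,x}{\sqrt{x^2-v^2}\,\sqrt{t^2-x^2}}\,dx\right]dv.
\end{align*}
For the inner integral I would use the substitution $x=vt/p$, which maps $x\in(v,t)$ back to $p\in(v,t)$ and turns the arguments $v/x$ and $t/x$ into $p/t$ and $p/v$; a direct computation recasts the bracket as $vt\int_v^t T_{|n|}(p/v)\,T_{|n|}(p/t)\,p^{-1}(t^2-p^2)^{-1/2}(p^2-v^2)^{-1/2}\,dp$, which is exactly the left-hand side of \Cref{Cormack} with $z=v$, $r=t$ and $\ell=|n|$, and hence equals $\pi/2$ for all $0<v<t$. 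This collapses the double integral to $I_n(t)=(-1)^{|n|}\int_0^t F_n(v)\,dv$, and differentiating by Lebesgue's differentiation theorem (legitimate since $F_n\in L^1_{loc}$) yields $(-1)^{|n|}I_n'(t)=F_n(t)$ for almost every $t>0$, which is precisely the $n$th term of the claimed formula.

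The step that genuinely uses the hypothesis \cref{Fcoefs}, and which I expect to be the main obstacle, is justifying the interchange of integration by Tonelli. As $x\to0^+$ the kernel $T_{|n|}(t/x)$ blows up like $x^{-|n|}$ (the leading Chebyshev term), while $|T_{|n|}(v/x)|\le1$ for $v<x$; the bound $|F_n(v)|\le C_n|v|^{|n|-1}$ near the origin is exactly what tames this, since it gives $\int_0^x |F_n(v)|(x^2-v^2)^{-1/2}\,dv\lesssim x^{|n|-1}$, so that the $x$-integrand is $O(1)$ near zero rather than the non-integrable $O(x^{1-|n|})$ that would occur for $|n|\ge2$ without decay. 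This is the same decay that rules out the polynomial null-space elements $t^{|n|-2j}$ of $\Upsilon_+^{|n|}$ from \Cref{upsmas}, so the recovery of each $F_n$ is unambiguous; the remaining square-root singularities at $x=v$ and $x=t$ are integrable and harmless. Finally I would reassemble $f(e^{is},t)=\sum_{n\in\mathbb{Z}}F_n(t)e^{ins}$, with convergence in $L^2$ on compact subsets of the cylinder coming from Parseval applied to $f\in H^2_{loc}$, which recovers $f$ almost everywhere and gives the stated series.
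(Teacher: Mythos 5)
Your proposal is correct and follows essentially the same route as the paper's proof: reduce to even $f$, apply \Cref{Gnlemma} modewise, multiply by the dual Chebyshev kernel and integrate, justify the interchange via the bound \cref{Fcoefs} together with $|T_{|n|}(y)|\le|2y|^{|n|}$, collapse the inner integral to $\pi/2$ by \Cref{Cormack}, and differentiate. The only (cosmetic) difference is the substitution used to reach Cormack's identity — you use $x=vt/p$ where the paper uses $v=u^2/x$ — and both work.
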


\begin{proof}

Let $f\in H^2_{loc}(\mathbb{S}\times\mathbb{R})$. Since $R$ annihilates odd functions, assume that $f$ is an even function. Consequently, $Rf$ is an even function on $\Xi$ in the sense that $Rf(\theta,\rho)=Rf(\theta+\pi,\pi-\rho)$. Therefore, assume, without loss of generality, that $0 \leq \rho < \pi/2$ so that $x =\tan \rho \geq 0$.

Let $Rf(\theta,\rho)=\sum_{n\in\mathbb{Z}} G_n(\rho)e^{in\theta}$. From \Cref{Gnlemma}
\begin{align}\label{toinvert}
G_n(\arctan x)= \frac{2(-1)^{|n|} }{\pi} \int_{0}^{x}\frac{F_n(u)T_{|n|}\left( \frac{u}{x}\right)}{\sqrt{x^2-u^2}} du,
\end{align} 
where $x = \tan \rho$ and $T_{|n|}$ is the $|n|^{th}$ Chebyshev polynomial of the first kind.

Multiplying both sides of equation \cref{toinvert} by $T_{|n|}\left( \frac{t}{x}\right)x/\sqrt{t^2 -x^2}$ and integrating with respect to $x$ from $0$ to some positive value $t$ yields
\begin{align*}
\int_{0}^{t}\frac{G_n(\arctan x)T_{|n|}\left( \frac{t}{x}\right)x}{\sqrt{t^2 -x^2}}dx &= \frac{2(-1)^{|n|} }{\pi}\int_{0}^{t} \int_{0}^{x}\frac{F_n(u)T_{|n|}\left( \frac{u}{x}\right)T_{|n|}\left( \frac{t}{x}\right)x}{\sqrt{t^2 -x^2}\sqrt{x^2-u^2}}dudx.
\end{align*}
Note that, by \cref{Fcoefs} the left-hand side is convergent and since $f \in H^2(\mathbb{S}\times [-t,t])$, by \Cref{SobolevM} $f$ is equal almost everywhere to a continuous and bounded function. Without loss of generality, we may assume that $f$ is such representative.

To change the order of integration, let us show that
\begin{align*}
\int_{0}^{t} \int_{0}^{x} \left| \frac{F_n(u)T_{|n|}\left( \frac{u}{x}\right)T_{|n|}\left( \frac{t}{x}\right)x}{\sqrt{t^2 -x^2}\sqrt{x^2-u^2}}\right|du\,dx < \infty.
\end{align*}
Without loss of generality, we may assume that $t<\epsilon$. Otherwise, we may split the integral into two parts: one where integration occurs over $x\in [\epsilon,t]$ and another one over $x\in (0,\epsilon)$. For the integral where $x$ is bounded away from zero, by continuity of $F_n$ and $T_{|n|}$, the numerator is bounded. The resulting integral is convergent and can be calculated using trigonometric substitution.

Since for all $n \in \mathbb{Z}$
\begin{align*}
\left| T_{|n|}(x) \right| \leq \begin{cases} 1 \qquad &\text{ for } |x| \leq 1, \\ |2x|^{|n|} \qquad &\text{ for } |x| > 1, \end{cases}
\end{align*}
we have that
\begin{align*}
\left| \frac{F_n(u)T_{|n|}\left( \frac{u}{x}\right)T_{|n|}\left( \frac{t}{x}\right)x}{\sqrt{t^2 -x^2}\sqrt{x^2-u^2}} \right|
&\leq \left| \frac{F_n(u)\left( \frac{2t}{x}\right)^{|n|}x}{\sqrt{t^2 -x^2}\sqrt{x^2-u^2}} \right|.
\end{align*}
Noticing that $0 \leq x \leq t$ we obtain the following bounds.
\begin{align*}
\left| \frac{F_n(u)T_{|n|}\left( \frac{u}{x}\right)T_{|n|}\left( \frac{t}{x}\right)x}{\sqrt{t^2 -x^2}\sqrt{x^2-u^2}} \right| &\leq \begin{cases} \left| \frac{F_n(u)(2t)}{\sqrt{t^2 -x^2}\sqrt{x^2-u^2}} \right| &\quad \text{ for } |n|<2, \\
&\quad \\
\left| \frac{F_n(u)(2t)^{|n|}}{x^{|n|-1}\sqrt{t^2 -x^2}\sqrt{x^2-u^2}} \right| &\quad \text{ for }|n| \geq 2.  \end{cases}
\end{align*}
By hypothesis, there exists a value $\epsilon > 0$ for which
\begin{align*}
|F_n(t)|\leq \begin{cases}  C_0 &\quad \text{ if } n=0, \\
C_n |t|^{|n|-1} &\quad \text{ if } n \in \mathbb{Z}\setminus \{0\}, \end{cases}
\end{align*}
for $t<\epsilon$. Hence, for all $|n|<2$  
\begin{align*}
\int_{0}^{t} \int_{0}^{x} \left| \frac{F_n(u)x}{\sqrt{t^2 -x^2}\sqrt{x^2-u^2}}\right|du\,dx & \leq (2t)     
\int_{0}^{t} \frac{1}{\sqrt{t^2 -x^2}}\int_{0}^{x} \frac{\left| F_n(u) \right|}{\sqrt{x^2-u^2}}\,du\,dx\\
&\leq (2t)C_n \int_{0}^{t} \frac{1}{\sqrt{t^2 -x^2}} \int_{0}^{x} \frac{1}{\sqrt{x^2-u^2}}\,du\,dx\\
&\leq \frac{t \pi^2 C_n  }{2}\\
&< \infty.
\end{align*}
Similarly, for $|n|\geq 2$
\begin{align*}
\int_{0}^{t} \int_{0}^{x}\left| \frac{F_n(u)T_{|n|}\left( \frac{u}{x}\right)T_{|n|}\left( \frac{t}{x}\right)x}{\sqrt{t^2 -x^2}\sqrt{x^2-u^2}}\right| du\,dx &\leq (2t)^{|n|}\int_{0}^{t} \frac{1}{x^{|n|-1}\sqrt{t^2 -x^2}}\int_{0}^{x} \frac{\left| F_n(u) \right|}{\sqrt{x^2-u^2}}\,du\,dx\\
&= (2t)^{|n|} C_n \int_{0}^{t} \frac{1}{x^{|n|-1}\sqrt{t^2 -x^2}}\int_{0}^{x} \frac{u^{|n|-1}}{\sqrt{x^2-u^2}}du\,dx.
\end{align*}
The changes of variables $v = \sqrt{x^2-u^2}$ yields
\begin{align*}
\int_{0}^{t} \int_{0}^{x}\left| \frac{F_n(u)T_{|n|}\left( \frac{u}{x}\right)T_{|n|}\left( \frac{t}{x}\right)x}{\sqrt{t^2 -x^2}\sqrt{x^2-u^2}}\right| du\,dx &\leq (2t)^{|n|} C_n \int_{0}^{t} \frac{1}{x^{|n|-1}\sqrt{t^2 -x^2}}\int_{0}^{x} (x^2-v^2)^{\frac{|n|-2}{2}}dv\,dx \\
&\leq (2t)^{|n|} C_n \int_{0}^{t} \frac{1}{x^{|n|-1}\sqrt{t^2 -x^2}}\int_{0}^{x} x^{|n|-2}dv\,dx \\
&=(2t)^{|n|} C_n \int_{0}^{t} \frac{1}{\sqrt{t^2 -x^2}}dx\\
&=\frac{ (2t)^{|n|}  \pi C_n}{2}\\
&< \infty.
\end{align*}

Therefore, we may interchange the order of integration to obtain
\begin{align*}
\int_{0}^{t}\frac{G_n(\arctan x)T_{|n|}\left( \frac{t}{x}\right)x}{\sqrt{t^2 -x^2}}dx &= \frac{2(-1)^{|n|} }{\pi}\int_{0}^{t}F_n(u)\left( \int_{u}^{t}\frac{T_{|n|}\left( \frac{u}{x}\right)T_{|n|}\left( \frac{t}{x}\right)x}{\sqrt{t^2 -x^2}\sqrt{x^2-u^2}}dx\right) du.
\end{align*}
Make the change of variables $v = u^2/x$ to obtain
\begin{align*}
\int_{0}^{t}\frac{G_n(\arctan x)T_{|n|}\left( \frac{t}{x}\right)x}{\sqrt{t^2 -x^2}}dx &= \frac{2(-1)^{|n|} }{\pi}\int_{0}^{t}F_n(u)\left( u\frac{u^2}{t}\int_{\frac{ u^2}{t}}^{u}\frac{T_{|n|}\left( \frac{v}{u}\right)T_{|n|}\left( \frac{v}{(u^2/t)}\right)}{\sqrt{u^2-v^2}\sqrt{v^2 -\left(\frac{u^2}{t}\right)^2}v}dv\right) du.
\end{align*}

By \Cref{Cormack}, the expression in parentheses is equal to $\pi/2$. Therefore, 
\begin{align*}
(-1)^{|n|}\int_{0}^{t}\frac{G_n(\arctan x)T_{|n|}\left( \frac{t}{x}\right)x}{\sqrt{t^2 -x^2}}dx = \int_{0}^{t}F_n(u)du.
\end{align*}
Using the Fundamental Theorem of Calculus, we obtain the following expression for the  $n^{th}$ Fourier coefficient of $f$:
\begin{align}\label{coefformula}
F_n(t)= (-1)^{|n|}\frac{d}{dt}\int_{0}^{t}\frac{G_n(\arctan x)T_{|n|}\left( \frac{t}{x}\right)x}{\sqrt{t^2 -x^2}}dx.
\end{align} Substituting in the Fourier series of $f$ yields
\begin{align}\label{formula}
f (e^{is},t)= \sum_{n\in \mathbb{Z}} \left(  (-1)^{|n|}\frac{d}{dt}\int_{0}^{t}\frac{G_n(\arctan x)T_{|n|}\left( \frac{t}{x}\right)x}{\sqrt{t^2 -x^2}}dx \right)e^{ins}.
\end{align} 
\end{proof}

\subsection*{Support Theorem}

Notice that \Cref{med} depends only on the local behavior of $f$ near the equator and is independent of its behavior at infinity. Similarly, the following support theorem depends only on $Rf$ vanishing for $\rho \in [0,\arctan t]$ to determine that $f(e^{is},t)=0$ for all $s \in [0,2\pi)$.

\begin{theorem}[Support Theorem]\label{supporttheo}
Let $f \in H^2_{loc}(\mathbb{S}\times \mathbb{R})$ and the Fourier series of $f$ on the cylinder be given by $ f(e^{is},t)= \sum_{n\in \mathbb{Z}}F_n(t)e^{ins}$. Assume for some $\epsilon>0$, there are positive constants $C_k$, $k=0,1,...$ such that
\begin{align*}
|F_n(t)|\leq \begin{cases}  C_0 &\quad \text{ if } n=0, \\
C_n |t|^{|n|-1} &\quad \text{ if } n \in \mathbb{Z}\setminus \{0\},  \end{cases} 
\end{align*}
whenever $|t|<\epsilon$. Then for a fix $\theta \in [0,2\pi)$ and $t \in [0,\infty)$ and all $s\in [0,2\pi)$, the values of $f(e^{is},t)$ are completely determined by all values $Rf(\theta,\rho)$ for $\rho \in [0,\arctan t]$. In particular, if $Rf(\theta,\rho)\equiv 0$ for all $\rho \in [0,\arctan t]$, then $f(e^{is},t)=0$ for all $s\in [0,2\pi)$.
\end{theorem}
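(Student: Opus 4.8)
The plan is to obtain the support theorem as a locality statement about the inversion formula of \Cref{med}, by reading off the domain of dependence of each Fourier coefficient. Because $R$ annihilates odd functions, I would first reduce to even $f$ (this is the sense in which the statement is to be understood, since the odd part lies in the null space and cannot be recovered), and expand $f(e^{is},t)=\sum_{n\in\mathbb{Z}}F_n(t)e^{ins}$ together with $Rf(\theta,\rho)=\sum_{n\in\mathbb{Z}}G_n(\rho)e^{in\theta}$. The growth hypothesis \cref{Fcoefs} is exactly what \Cref{med} requires, so every integral below converges and the interchanges of integration carried out there remain valid.

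The crucial object is the identity from the proof of \Cref{med}, taken one step before the Fundamental Theorem of Calculus is invoked:
\begin{align*}
(-1)^{|n|}\int_{0}^{t}\frac{G_n(\arctan x)\,T_{|n|}\!\left(\tfrac{t}{x}\right)x}{\sqrt{t^2-x^2}}\,dx \;=\; \int_{0}^{t}F_n(u)\,du,
\end{align*}
valid for every $t\ge 0$. I would combine this with the expression for $G_n$ coming from \Cref{Gnlemma}, namely $G_n(\rho)=\tfrac{1}{2\pi}\int_0^{2\pi}Rf(\theta,\rho)e^{-in\theta}\,d\theta$, which shows that each coefficient $G_n(\rho)$ is determined by the values $\{Rf(\theta,\rho):\theta\in[0,2\pi)\}$ at the single latitude $\rho$.

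Next I would make the domain of dependence explicit. For a fixed height $t$, the left-hand side above integrates over $x\in[0,t]$, that is, over latitudes $\rho=\arctan x\in[0,\arctan t]$; hence $\int_0^{t'}F_n(u)\,du$ is determined, for every $t'\in[0,t]$, by the data $\{Rf(\theta,\rho):\theta\in[0,2\pi),\ \rho\in[0,\arctan t]\}$ alone. Knowing the absolutely continuous function $t'\mapsto\int_0^{t'}F_n(u)\,du$ on $[0,t]$ pins down $F_n$ almost everywhere on $[0,t]$ by differentiation, in particular $F_n(t)$; summing the Fourier series then shows that $f(e^{is},t)=\sum_{n\in\mathbb{Z}} F_n(t)e^{ins}$ is completely determined by $Rf$ restricted to $\rho\in[0,\arctan t]$. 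For the vanishing statement, if $Rf(\theta,\rho)\equiv 0$ for all $\rho\in[0,\arctan t]$, then $G_n\equiv 0$ on $[0,\arctan t]$ for every $n$, the displayed identity forces $\int_0^{t'}F_n(u)\,du=0$ for all $t'\le t$, and differentiation yields $F_n\equiv 0$ almost everywhere on $[0,t]$; therefore $f(e^{is},t)=0$ for all $s\in[0,2\pi)$ (and, by continuity of the Sobolev representative together with evenness, throughout the strip $|t'|\le t$).

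The one point demanding care is the locality bookkeeping. I would deliberately avoid differentiating \cref{coefformula} directly, since $\tfrac{d}{dt}$ evaluated at $t$ appears to probe latitudes just above $\arctan t$; working instead with the pre-differentiated primitive $\int_0^{t'}F_n=(-1)^{|n|}(\cdots)$ for $t'\le t$ makes the dependence on $\rho\in[0,\arctan t]$ manifest and removes this apparent difficulty. The only other thing to keep honest is that the reduction to even $f$ is essential: on odd functions $Rf\equiv 0$ while $f\not\equiv 0$, so the theorem is really a statement about the even part of $f$.
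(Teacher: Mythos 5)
Your proof is correct and follows essentially the same route as the paper's: both reduce to the Fourier-coefficient identity established inside the proof of \Cref{med} and observe that $Rf\equiv 0$ for $\rho\in[0,\arctan t]$ forces $G_n\equiv 0$ there for every $n$, whence $F_n$ vanishes. If anything, your choice to work with the pre-differentiated primitive $t'\mapsto\int_0^{t'}F_n(u)\,du$ for $t'\le t$ and then recover $F_n(t)$ by differentiation together with continuity of the Sobolev representative is \emph{more} careful about the domain of dependence than the paper's one-line appeal to the differentiated formula \cref{coefformula}, which as written probes latitudes just above $\arctan t$.
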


\begin{proof}
By \Cref{med} 
\begin{align*}
f(e^{is},t) = \sum_{n\in \mathbb{Z}} \left(  (-1)^{|n|}\frac{d}{dt}\int_{0}^{t}\frac{G_n(\arctan x)T_{|n|}\left( \frac{t}{x}\right)x}{\sqrt{t^2 -x^2}}dx \right)e^{ins}.
\end{align*}
Now
\begin{align*}
Rf(\theta,\rho)\equiv 0 \text{ for all } \rho \in [0,\arctan t] &\iff Rf(\theta,\rho) = \sum_{n \in \mathbb{Z}} G_n(\rho)e^{in\theta} = 0  \text{ for all } \rho \in [0,\arctan t]\\
 &\iff G_n(\rho) \equiv 0 \text{ for all } n \in \mathbb{Z} \text{ and } \rho \in [0,\arctan t].
\end{align*}
Thus,
\begin{align*}
\int_{0}^{t}\frac{G_n(\arctan x)T_{|n|}\left( \frac{t}{x}\right)x}{\sqrt{t^2 -x^2}}dx = 0
\end{align*}
and $f(e^{is},t)=0$ for all $s \in [0,2\pi)$.
\end{proof}

\section{The Dual Transform}\label{sec:Dualsec}
In this section we study the dual transform $R^*$. The dual will be defined as the formal adjoint of $R$ in the sense that, for $g \in C(\mathbb{S}^2)$ and $f \in H^2_{pc}(\mathbb{S}\times \mathbb{R})$, $R^*g$ satisfies the duality relation
\begin{align*}
\int_0^{2\pi}\int_0^\pi Rf(\theta,\rho)\overline{g(\theta,\rho)}|\sin \theta| \,d\rho\, d\theta = \int_0^{2\pi}\int_{-\infty}^\infty f(e^{is},t)\overline{R^*g(e^{is},t)} \,dt \, ds. 
\end{align*}
The following theorem gives us an explicit formula for $R^*$.

\begin{theorem}\label{Dual}
Let $g \in C(\mathbb{S}^2)$ be an even function and $f \in H^2_{pc}(\mathbb{S}\times \mathbb{R})$. Then the formal adjoint of $R$ is given by the formula
\begin{align*}
R^*g(e^{is},t)&=\frac{1}{\pi} \sum_{\sigma = \pm 1} \, \int^{\infty}_{0} g\left(s + \sigma \arccos \left( \frac{-t}{\sqrt{v^2 + t^2}}\right),\arctan \sqrt{v^2 + t^2} \right)\, \frac{dv}{(1+v^2 +t^2)^{\frac{3}{2}}}
\end{align*}
for $(e^{is},t)\in \mathbb{S}\times \mathbb{R}$.

\end{theorem}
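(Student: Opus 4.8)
The plan is to read off $R^*$ directly from its defining identity by pushing all the integration from the sphere onto the cylinder via one change of variables. I would start from the pairing of $Rf$ against $g$ in $L^2(\mathbb{S}^2)$, i.e. against the spherical area element $\sin\rho\,d\rho\,d\theta$, and substitute the integral representation \cref{Rparametric} of $Rf$:
$$\frac{1}{2\pi}\int_0^{2\pi}\int_0^{\pi}\int_0^{2\pi} f(e^{is},-\tan\rho\cos(\theta-s))\,\overline{g(\theta,\rho)}\,\sin\rho\;ds\,d\rho\,d\theta.$$
Since $g$ is even and $\tan(\pi-\rho)=-\tan\rho$, $\cos((\theta+\pi)-s)=-\cos(\theta-s)$, the entire integrand is invariant under $(\theta,\rho)\mapsto(\theta+\pi,\pi-\rho)$; this folds the lower hemisphere onto the upper one, replacing $\int_0^\pi$ by $2\int_0^{\pi/2}$ and ensuring $\tan\rho\ge 0$ throughout. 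The local integrability of $f$ from $H^2_{pc}$ together with continuity of $g$ lets me invoke Fubini to bring the $ds$-integration to the outside.

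For fixed $s$ I would evaluate the inner $(\theta,\rho)$-integral by a polar-type substitution. Setting $w=\theta-s$ and $r=\tan\rho$ gives $d\theta\,d\rho=\tfrac{1}{1+r^2}\,dr\,dw$ over $(w,r)\in[0,2\pi)\times[0,\infty)$ (using $2\pi$-periodicity in $\theta$), after which the Cartesian coordinates $t=-r\cos w$, $Y=r\sin w$ — with $r=\sqrt{t^2+Y^2}$ and Jacobian $dt\,dY=r\,dr\,dw$ — turn the argument of $f$ into exactly $t$. The essential bookkeeping is the weight: combining the two Jacobian factors with $\sin\rho=r/\sqrt{1+r^2}=\sqrt{t^2+Y^2}/\sqrt{1+t^2+Y^2}$ collapses
$$\frac{1}{1+r^2}\cdot\frac{1}{r}\cdot\sin\rho=\frac{1}{(1+t^2+Y^2)^{3/2}},$$
which is precisely the kernel appearing in the statement. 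Getting the exponent $3/2$ correct is exactly what forces the pairing to be taken against $\sin\rho\,d\rho\,d\theta$.

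Finally I would reassemble the answer. The map $(w,r)\mapsto(t,Y)$ is, almost everywhere, a bijection onto the plane $\{(t,Y)\}$; writing $v=|Y|$ and folding the half-planes $Y>0$ and $Y<0$ together produces the sum over $\sigma=\pm1$, with $\rho=\arctan\sqrt{v^2+t^2}$ and, from $\cos w=-t/\sqrt{v^2+t^2}$ and $\operatorname{sgn}(\sin w)=\sigma$, the angle $\theta=s+\sigma\arccos\!\big(-t/\sqrt{v^2+t^2}\big)$. After the fold the prefactor is $\tfrac1\pi$, so the pairing becomes
$$\frac{1}{\pi}\int_0^{2\pi}\int_{-\infty}^{\infty} f(e^{is},t)\,\overline{K(s,t)}\,dt\,ds,\qquad K(s,t)=\sum_{\sigma=\pm1}\int_0^\infty \frac{g\big(s+\sigma\arccos(-t/\sqrt{v^2+t^2}),\,\arctan\sqrt{v^2+t^2}\big)}{(1+v^2+t^2)^{3/2}}\,dv.$$
Comparing with the duality relation identifies $R^*g(e^{is},t)=\tfrac1\pi K(s,t)$, which is the asserted formula.

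I expect the main obstacle to be rigor in the change of variables rather than the algebra: one must verify absolute integrability to apply Fubini and the substitution theorem, confirm that the excluded sets (the equator $\rho=\pi/2$ and the axes $w\in\{0,\pi\}$, where the polar map degenerates) have measure zero and may be discarded, and track the folding $Y\mapsto v$ so that the $\sigma=\pm1$ sum, and not a spurious double count, emerges. The weight computation displayed above is the heart of the argument.
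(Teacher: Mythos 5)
Your argument is correct and reaches the stated kernel with the right constant, but it is organized quite differently from the paper's proof. The paper keeps everything one-dimensional: it first uses the (WLOG) evenness of $f$ to fold the $u=\theta-s$ integral from $[0,2\pi]$ to $[0,\pi]$, then substitutes $t=-\tan\rho\cos u$ in the $u$-variable, then splits the $\rho$-integral at $\pi/2$ and uses the evenness of $g$ (together with a spherical-coordinate identification of $(\theta,\pi+\rho)$ with $(\theta-\pi+\ldots)$) to fold the lower hemisphere, which is where its $\sigma=\pm1$ sum comes from; finally it substitutes $u=\tan\rho$ and $v=\sqrt{u^2-t^2}$ to land on $(1+v^2+t^2)^{-3/2}$. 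You instead fold the hemispheres immediately using only the evenness of $g$ and the invariance of the integrand under $(\theta,\rho)\mapsto(\theta+\pi,\pi-\rho)$ (so you never need $f$ even), and then perform a single two-dimensional polar-to-Cartesian change of variables $(w,r)\mapsto(t,Y)=(-r\cos w,\,r\sin w)$; your $\sigma=\pm1$ sum arises from folding $Y\mapsto v=|Y|$ rather than from the hemisphere fold. The Jacobian bookkeeping
\begin{align*}
\sin\rho\,d\rho\,d\theta=\frac{r}{(1+r^2)^{3/2}}\,dr\,dw=\frac{dt\,dY}{(1+t^2+Y^2)^{3/2}}
\end{align*}
is exactly the paper's kernel computation in compressed form, and the prefactor $2\cdot\frac{1}{2\pi}=\frac{1}{\pi}$ checks out. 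What your route buys is brevity and a cleaner geometric picture (one substitution, no sign-chasing through $\arccos$ identities, no need to invoke the parity of $f$); what the paper's route buys is that every step is an elementary one-variable substitution whose absolute convergence is checked term by term. The Fubini justification you defer is in fact immediate: by the Sobolev embedding (\Cref{SobolevM}) $f$ has a bounded continuous representative and $g$ is bounded on the compact sphere, so the triple integral is of a bounded function over a finite-measure set, and the degenerate loci $r=0$, $\rho=\pi/2$, $Y=0$ are null sets as you note.
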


\begin{proof}
Let $f\in H^2_{pc}(\mathbb{S}\times\mathbb{R})$. Since $R$ annihilates odd functions and by \Cref{SobolevM}, 
without loss of generality, assume that $f$ is a continuous, bounded  and even function on $\mathbb{S}\times \mathbb{R}$. Let $g \in C(\mathbb{S}^2)$ be even in the sense that 
\begin{align*}
g(\theta,\rho)=g(\theta+\pi, \pi - \rho)
\end{align*}
for all $\theta \in [0,2\pi)$ and $\rho \in [0,\pi]$. Then,
\begin{align*}
 \langle Rf, g \rangle_{\mathbb{S}^2} &= \int_0^{2\pi} \int_{0}^{\pi}Rf(\theta,\rho)\overline{g(\theta,\rho)} \, \sin\rho \, d\rho \, d\theta\\
 &=\frac{1}{2\pi}\int_0^{2\pi} \int_{0}^{\pi} \int_0^{2\pi} f\left( e^{is},-\tan \rho \cos(\theta-s)\right)\overline{g(\theta,\rho)} \, \sin\rho \,ds \, d\rho \, d\theta\\
 &=\frac{1}{2\pi}\int_0^{2\pi} \int_{0}^{\pi} \int_{-s}^{2\pi-s} f\left( e^{is},-\tan \rho \cos u \right)\overline{g(u+s,\rho)} \, \sin\rho \,du \, d\rho \, ds,
\end{align*}
by switching the order of integration and letting $u=\theta-s$ with respect to the $\theta$ variable.

Since both $\cos u$ and $g(u+s,\rho)$  are $2\pi-$periodic functions of $u$, we can change the inner most limits of integration to obtain
\begin{align*}
\langle Rf, g \rangle_{\mathbb{S}^2}
 &=\frac{1}{2\pi}\int_0^{2\pi} \int_{0}^{\pi} \int_{0}^{2\pi} f\left( e^{is},-\tan \rho \cos u\right)\overline{g(u+s,\rho)} \, \sin\rho \,du \, d\rho \, ds\\
 &=\frac{1}{2\pi}\int_0^{2\pi} \int_{0}^{\pi} \int_{0}^{\pi} f\left( e^{is},-\tan \rho \cos u \right)\overline{g(u+s,\rho)} \, \sin\rho \,du \, d\rho \, ds \\
 &\quad +\frac{1}{2\pi}\int_0^{2\pi} \int_{0}^{\pi} \int_{\pi}^{2\pi} f\left( e^{is},-\tan \rho \cos u\right)\overline{g(u+s,\rho)} \, \sin\rho \,du \, d\rho \, ds.
\end{align*}
Since $f$ is even, letting $v = u -\pi$ in the last integral yields 
\begin{align*}
 \langle Rf, g \rangle_{\mathbb{S}^2} &=\frac{1}{2\pi}\int_0^{2\pi} \int_{0}^{\pi} \int_{0}^{\pi} f\left( e^{is},-\tan \rho \cos u \right)\overline{g(u+s,\rho)} \, \sin\rho \,du \, d\rho \, ds \\ 
&\quad +\frac{1}{2\pi}\int_0^{2\pi} \int_{0}^{\pi} \int_{0}^{\pi} f\left( e^{i(s+\pi)},-\tan \rho \cos v \right)\overline{g(v+(s+\pi),\rho)} \, \sin\rho \,dv \, d\rho \, ds. 
\end{align*}
By switching the order of integration, letting $\omega=s+\pi$ with respect to the $s$ variable, using the fact that the integrand is $2\pi-$periodic with respect to the new variable $\omega$, switching back the order of integration and renaming the $\omega$ variable $s$ we obtain
\begin{align*}
\langle Rf, g \rangle_{\mathbb{S}^2} &= \frac{1}{\pi} \int_0^{2\pi} \int_{0}^{\pi} \int_{0}^{\pi} f\left( e^{is},-\tan \rho \cos u \right)\overline{g(s+u,\rho)} \, \sin\rho \,du \, d\rho \, ds 
\end{align*}
Letting $t = - \tan \rho \cos u$ with respect to the $u$ variable yields
\begin{align*}
\langle Rf, g \rangle_{\mathbb{S}^2} &= \frac{1}{\pi}  \int_0^{2\pi} \int_{0}^{\pi} \int_{-\tan \rho}^{\tan \rho} f\left( e^{is}, t\right)\overline{g\left(s + \arccos \left( \frac{-t}{\tan \rho}\right),\rho\right)} \, \frac{\sin \rho}{\sqrt{\tan^2\rho -t^2}} \,dt \, d\rho \, ds\\ 
&= \frac{1}{\pi}  \int_0^{2\pi} \int_{0}^{\pi/2} \int_{-\tan \rho}^{\tan \rho} f\left( e^{is}, t\right)\overline{g\left(s + \arccos \left( \frac{-t}{\tan \rho}\right),\rho\right)} \, \frac{\sin \rho}{\sqrt{\tan^2\rho -t^2}} \,dt \, d\rho \, ds\\ &\quad - \frac{1}{\pi}  \int_0^{2\pi} \int_{\pi/2}^{\pi} \int_{\tan \rho}^{-\tan \rho} f\left( e^{is}, t\right)\overline{g\left(s + \arccos \left( \frac{-t}{\tan \rho}\right),\rho\right)} \, \frac{\sin \rho}{\sqrt{\tan^2\rho -t^2}} \,dt \, d\rho \, ds\\
\end{align*}
Splitting the innermost integrals, switching the order of integration and noting that for $t<0$ we have that $-\arctan t = \arctan |t|$ and $\arctan t = -\arctan |t|$ yields
\begin{align*}
\langle Rf, g \rangle_{\mathbb{S}^2} 
&= \frac{1}{\pi}  \int_0^{2\pi} \int_{-\infty}^{0} \int_{\arctan| t|}^{\pi/2} f\left( e^{is}, t\right)\overline{g\left(s + \arccos \left( \frac{-t}{\tan \rho}\right),\rho\right)} \, \sin \rho\,\frac{ \, d\rho \,dt \, ds}{\sqrt{\tan^2\rho -t^2}}  \\ 
&\quad + \frac{1}{\pi}  \int_0^{2\pi} \int_{0}^{\infty} \int_{\arctan | t| }^{\pi/2} f\left( e^{is}, t\right)\overline{g\left(s + \arccos \left( \frac{-t}{\tan \rho}\right),\rho\right)} \, \sin \rho \, \frac{\, d\rho \,dt \, ds}{\sqrt{\tan^2\rho -t^2}}  \\
&\quad - \frac{1}{\pi}  \int_0^{2\pi} \int_{-\infty}^{0} \int_{\pi/2}^{\pi-\arctan | t|} f\left( e^{is}, t\right)\overline{g\left(s + \arccos \left( \frac{-t}{\tan \rho}\right),\rho\right)} \, \sin \rho \,\frac{\, d\rho \,dt \, ds}{\sqrt{\tan^2\rho -t^2}}  \\ 
&\quad - \frac{1}{\pi}  \int_0^{2\pi} \int_{0}^{\infty} \int_{\pi/2}^{\pi-\arctan | t|} f\left( e^{is}, t\right)\overline{g\left(s + \arccos \left( \frac{-t}{\tan \rho}\right),\rho\right)}\, \sin \rho \, \frac{\, d\rho \,dt \, ds}{\sqrt{\tan^2\rho -t^2}}  
\end{align*} 
Therefore,
\begin{align*}
\langle Rf, g \rangle_{\mathbb{S}^2} &= \frac{1}{\pi}  \int_0^{2\pi} \int_{-\infty}^{\infty} \int_{\arctan| t|}^{\pi/2} f\left( e^{is}, t\right)\overline{g\left(s + \arccos \left( \frac{-t}{\tan \rho}\right),\rho\right)} \, \sin \rho\,\frac{\, d\rho \,dt  \, ds}{\sqrt{\tan^2\rho -t^2}} \\
 &\quad - \frac{1}{\pi}  \int_0^{2\pi} \int_{-\infty}^{\infty} \int_{\pi/2}^{\pi-\arctan | t|} f\left( e^{is}, t\right)\overline{g\left(s + \arccos \left( \frac{-t}{\tan \rho}\right),\rho\right)} \, \sin \rho\,\frac{\, d\rho \,dt  \, ds}{\sqrt{\tan^2\rho -t^2}} 
\end{align*} 
Letting $u = \tan \rho$ we obtain
\begin{align*}
&\langle Rf, g \rangle_{\mathbb{S}^2}\\
&= \frac{1}{\pi}  \int_0^{2\pi} \int_{-\infty}^{\infty} \int_{|t|}^{\infty} f \left( e^{is}, t\right)\overline{g\left(s + \arccos \left( \frac{-t}{u}\right),\arctan u \right)} \, \frac{u \, du \,dt  \, ds}{(1+u^2)^{\frac{3}{2}}\sqrt{u^2 -t^2}} \\ 
&\quad - \frac{1}{\pi}  \int_0^{2\pi} \int_{-\infty}^{\infty} \int_{-\infty}^{-|t|} f \left( e^{is}, t\right)\overline{g\left(s + \arccos \left( \frac{-t}{u}\right),\arctan u \right)} \, \frac{u \, du \,dt  \, ds}{(1+u^2)^{\frac{3}{2}}\sqrt{u^2 -t^2}}\\
&= \frac{1}{\pi}  \int_0^{2\pi} \int_{-\infty}^{\infty} \int_{|t|}^{\infty} f \left( e^{is}, t\right)\overline{g\left(s + \arccos \left( \frac{-t}{u}\right),\arctan u \right)} \, \frac{u \, du \,dt  \, ds}{(1+u^2)^{\frac{3}{2}}\sqrt{u^2 -t^2}}\\ 
&\quad - \frac{1}{\pi}  \int_0^{2\pi} \int_{-\infty}^{\infty} \int_{-\infty}^{-|t|} f \left( e^{is}, t\right)\overline{g\left(s + \pi + \arccos \left( \frac{-t}{u}\right),\pi -\arctan u \right)} \, \frac{u \, du \,dt  \, ds}{(1+u^2)^{\frac{3}{2}}\sqrt{u^2 -t^2}} 
\end{align*} 
since $g$ is even. Letting $\omega = -u $ in the last integral yields
\begin{align*}
&\langle Rf, g \rangle_{\mathbb{S}^2}\\
&= \frac{1}{\pi}  \int_0^{2\pi} \int_{-\infty}^{\infty} \int_{|t|}^{\infty} f \left( e^{is}, t\right)\overline{g\left(s + \arccos \left( \frac{-t}{u}\right),\arctan u \right)} \, \frac{u \, du \,dt  \, ds}{(1+u^2)^{\frac{3}{2}}\sqrt{u^2 -t^2}}\\ 
&\quad + \frac{1}{\pi}  \int_0^{2\pi} \int_{-\infty}^{\infty} \int^{\infty}_{|t|} f \left( e^{is}, t\right)\overline{g\left(s + \pi+ \arccos \left( \frac{t}{\omega}\right),\pi +\arctan \omega \right)} \, \frac{\omega \, d\omega \,dt  \, ds}{(1+\omega^2)^{\frac{3}{2}}\sqrt{\omega^2 -t^2}}.
\end{align*}

Note that the integration of $g$ is over angles of the form $(\theta, \pi + \rho)$ for some $\rho \in [0,\pi]$. By allowing the second coordinate to be greater than or equal to $\pi$, we lose unique representation of points in $\mathbb{S}^2$, just as when we allow the first coordinate to be outside $[0,2\pi)$. However, this does not affect the integration process. In fact, a point on $\mathbb{S}^2$ described in spherical coordinates by  $\left(s + \pi+ \arccos \left( \frac{t}{\omega}\right),\pi +\arctan \omega \right)$ yields
\begin{align*}
\begin{bmatrix}
\cos(s + \pi+ \arccos \left( \frac{t}{\omega}\right))\sin(\pi +\arctan \omega)\\
\sin(s + \pi+ \arccos \left( \frac{t}{\omega}\right))\sin(\pi +\arctan \omega)\\
\cos(\pi +\arctan \omega)
\end{bmatrix} &= -\begin{bmatrix}
\cos(s + \pi+ \arccos \left( \frac{t}{\omega}\right))\sin(\arctan \omega)\\
\sin(s + \pi+ \arccos \left( \frac{t}{\omega}\right))\sin(\arctan \omega)\\
\cos(\arctan \omega)
\end{bmatrix}\\
&=\begin{bmatrix}
\cos(s + \arccos \left( \frac{t}{\omega}\right))\sin(\pi -\arctan \omega)\\
\sin(s + \arccos \left( \frac{t}{\omega}\right))\sin(\pi -\arctan \omega)\\
\cos(\pi-\arctan \omega)
\end{bmatrix}
\end{align*}
or
\begin{align*}
\left(s + \pi+ \arccos \left( \frac{t}{\omega}\right),\pi +\arctan \omega \right) = \left(s + \arccos \left( \frac{t}{\omega}\right),\pi -\arctan \omega \right).
\end{align*}
Since $g$ is an even function, integrating over points
\begin{align*}
\left(s + \pi+ \arccos \left( \frac{t}{\omega}\right),\pi +\arctan \omega \right)
\end{align*}is equivalent to integrating over points
\begin{align*}
\left(s + \pi+ \arccos \left( \frac{t}{\omega}\right),\arctan \omega \right).
\end{align*} 
Yielding
\begin{align*}
&\langle Rf, g \rangle_{\mathbb{S}^2} \\
&= \frac{1}{\pi}  \int_0^{2\pi} \int_{-\infty}^{\infty} \int_{|t|}^{\infty} f \left( e^{is}, t\right)\overline{g\left(s + \arccos \left( \frac{-t}{u}\right),\arctan u \right)} \, \frac{u \, du \,dt  \, ds}{(1+u^2)^{\frac{3}{2}}\sqrt{u^2 -t^2}} \\
&\quad + \frac{1}{\pi}  \int_0^{2\pi} \int_{-\infty}^{\infty} \int^{\infty}_{|t|} f \left( e^{is}, t\right)\overline{g\left(s + \pi+ \arccos \left( \frac{t}{\omega}\right),\arctan \omega \right)} \, \frac{\omega\, d\omega \,dt  \, ds}{(1+\omega^2)^{\frac{3}{2}}\sqrt{\omega^2 -t^2}} \\
&=\frac{1}{\pi}  \int_0^{2\pi} \int_{-\infty}^{\infty} \int^{\infty}_{|t|} f \left( e^{is}, t\right) \sum_{\sigma = \pm 1} \, \overline{g\left(s + \sigma \arccos \left( \frac{-t}{u}\right),\arctan u \right)} \, \frac{u \, du \,dt  \, ds}{(1+u^2)^{\frac{3}{2}}\sqrt{u^2 -t^2}} .
\end{align*}

Letting $v = \sqrt{u^2 - t^2}$ yields
\begin{align*}
&\langle Rf, g \rangle_{\mathbb{S}^2}\\ &=\frac{1}{\pi}  \int_0^{2\pi} \int_{-\infty}^{\infty} \int^{\infty}_{0} f \left( e^{is}, t\right) \sum_{\sigma = \pm 1}\, \overline{g\left(s + \sigma \arccos \left( \frac{-t}{\sqrt{v^2 + t^2}}\right),\arctan \sqrt{v^2 + t^2} \right)} \, \frac{\, dv \,dt  \, ds}{(1+v^2 +t^2)^{\frac{3}{2}}} .
\end{align*}
Consequently, for all $s\in [0,2\pi)$ and $t\in \mathbb{R}$
\begin{align*}
R^*g(e^{is},t)&=\frac{1}{\pi} \sum_{\sigma = \pm 1} \, \int^{\infty}_{0} g\left(s + \sigma \arccos \left( \frac{-t}{\sqrt{v^2 + t^2}}\right),\arctan \sqrt{v^2 + t^2} \right)\, \frac{dv}{(1+v^2 +t^2)^{\frac{3}{2}}}.
\end{align*}
\end{proof}
As the double fibration theory \cite{GelfDoubleFibration,QuintoependenceGeneralizedRadon} guarantees, $R^*$ integrates the function $g$ over all normal directions $\zeta(\theta,\rho)$ belonging to sets $E_{\zeta(\theta,\rho)}$ that contain the point $(e^{is},t)$ (\Cref{figura2}). To exhibit this relationship, we can show that $(e^{is},t)$ belongs to all planes defined by the normal vectors $\zeta(\theta,\rho)$ we are integrating over. In spherical coordinates, the normal vectors $\zeta(\theta,\rho)$ can be expressed as
\begin{align*}
&\zeta \left( s \pm \arccos \left( \frac{-t}{\sqrt{v^2 +t^2}} \right),\arctan \sqrt{v^2 +t^2}\right)\\ 
&\qquad =\left(\sin\left(\arctan \sqrt{v^2 +t^2}\right) \, e^{i\left(s \pm \arccos\left( \frac{-t}{\sqrt{v^2 +t^2}}\right)\right)}, \cos\left(\arctan \sqrt{v^2 +t^2}\right)\right)
\end{align*}
From which it follows that
\begin{align*}
&(e^{is},t)\cdot \zeta \left( s \pm \arccos \left( \frac{-t}{\sqrt{v^2 +t^2}} \right),\arctan \sqrt{v^2 +t^2}\right)\\
&\qquad =\sin \left( \arctan\sqrt{v^2 +t^2}\right) \mathfrak{Re} \left( e^{is -i\left( s \pm \arccos \left( \frac{-t}{\sqrt{v^2 +t^2}} \right) \right)}\right)\\
&\qquad \quad +t\cos\left( \arctan \sqrt{v^2 +t^2} \right)\\
&\qquad=\frac{\sqrt{v^2 +t^2}}{\sqrt{1+v^2 +t^2}} \mathfrak{Re} \left( e^{\mp i \arccos \left( \frac{-t}{\sqrt{v^2 +t^2}} \right)} \right) +\frac{t}{\sqrt{1+v^2 +t^2}}\\
&\qquad =\frac{\sqrt{v^2 +t^2}}{\sqrt{1+v^2 +t^2}} \cos \left( \mp \arccos \left( \frac{-t}{\sqrt{v^2 +t^2}} \right) \right) +\frac{t}{\sqrt{1+v^2 +t^2}}\\
&\qquad =0.
\end{align*}
Therefore, for all $v \in [0,\infty) $
\begin{align*}
(e^{is},t) \in E_{\zeta \left( s \pm \arccos \left( \frac{-t}{\sqrt{v^2 +t^2}} \right),\arctan \sqrt{v^2 +t^2}\right) }.
\end{align*}

\begin{figure}\label{figura2}
\begin{center}
 \includegraphics[scale=0.325]{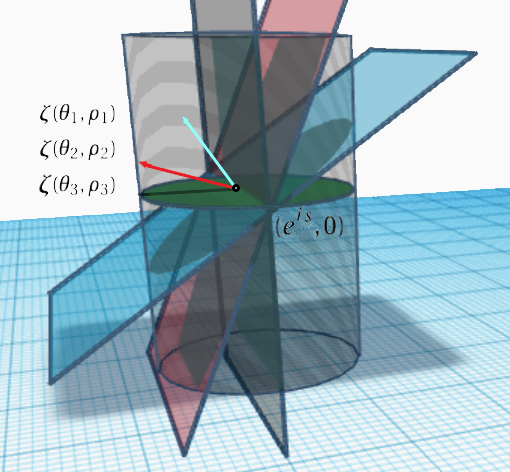}
\end{center}
\caption{$R^*g(e^{is},0)$ integrates the function $g$ over all sets $E_{\zeta(\theta,\rho)}$ containing the point $(e^{is},0)$}
\end{figure}

Assuming $g$ is a well-behaved function, we can recover the original function $g$ from its mean values $R^*g$. We will first compute the forward equation $R^*g = h$ in terms of the Fourier coefficients of $R^*g$ and $g$. Once we have this expression, and noting the relationship between the Fourier coefficients of $R^*g$ and the right-sided Chebyshev fractional integrals, we will state conditions on $g$ for which $R^*g=h$ can be inverted.  

\begin{lemma}\label{lemmaforwardeqgstar}
Let $g \in C(\mathbb{S}^2)$ be an even function and 
\begin{align*}
g(\theta,\rho)=\sum_{n \in \mathbb{Z}} G_n(\rho)e^{in\theta}
\end{align*}
its Fourier series representation for all $(\theta,\rho) \in \Xi$. Let $R^*g(e^{is},t)
= \sum_{n\in \mathbb{Z}} H_n(t)e^{ins}$ be the Fourier series representation of $R^*g$ on $\mathbb{S}\times \mathbb{R}$. Then, for all $n\in \mathbb{Z}$ 
\begin{align}\label{ForwardeqRstar}
H_n(t) &=\frac{(-1)^{|n|}}{\sqrt{\pi}} \left( \Upsilon_{-}^{|n|} G_n^{\#} \right)(t)
\end{align}
where $ \Upsilon_{-}^{|n|}$ is the right-sided Chebyshev fractional integral of order $|n|$ in \cref{right-sided} in \Cref{sec:suptheory} and 
\begin{align*}
G_n^{\#}(v)=\frac{G_{n}\left(\arctan v\right)}{(1+v^2)^{\frac{3}{2}}}.
\end{align*}
\end{lemma}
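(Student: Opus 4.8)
The plan is to compute the Fourier coefficient $H_n(t)=\frac{1}{2\pi}\int_0^{2\pi}R^*g(e^{is},t)e^{-ins}\,ds$ directly from the explicit formula for $R^*g$ furnished by \Cref{Dual}, and then to recognize the resulting one-dimensional integral as a right-sided Chebyshev fractional integral. The advantage of working from the closed form of $R^*g$, rather than expanding $g$ in its Fourier series and integrating termwise, is that I never have to justify termwise integration of the (possibly non-uniformly convergent) Fourier series of $g$; I need only a single application of Fubini's theorem.

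First I would substitute
\[
R^*g(e^{is},t)=\frac{1}{\pi}\sum_{\sigma=\pm1}\int_0^\infty g\!\left(s+\sigma\arccos\!\left(\tfrac{-t}{\sqrt{v^2+t^2}}\right),\arctan\sqrt{v^2+t^2}\right)\frac{dv}{(1+v^2+t^2)^{3/2}}
\]
into the defining integral for $H_n$ and interchange $\int_0^{2\pi}ds$ with $\sum_\sigma\int_0^\infty dv$. This interchange is justified because $g$ is continuous on the compact set $\mathbb{S}^2$, hence bounded, and $\int_0^\infty(1+v^2+t^2)^{-3/2}\,dv<\infty$ for each fixed $t$, so the full integrand is absolutely integrable. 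For fixed $v,\sigma,t$ write $\rho=\arctan\sqrt{v^2+t^2}$ and $\alpha=\sigma\arccos\!\left(\tfrac{-t}{\sqrt{v^2+t^2}}\right)$; neither depends on $s$, so the substitution $\theta=s+\alpha$ together with the $2\pi$-periodicity of $g(\cdot,\rho)$ gives
\[
\frac{1}{2\pi}\int_0^{2\pi}g(s+\alpha,\rho)e^{-ins}\,ds=e^{in\alpha}\,\frac{1}{2\pi}\int_0^{2\pi}g(\theta,\rho)e^{-in\theta}\,d\theta=e^{in\alpha}G_n(\rho),
\]
cleanly isolating the single coefficient $G_n$.

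Next I would collapse the sum over $\sigma$. Since $\sum_{\sigma=\pm1}e^{in\sigma\phi}=2\cos(n\phi)$ and $\cos(n\arccos x)=T_{|n|}(x)$ for $|x|\le1$, the two terms combine into $2\,T_{|n|}\!\left(\tfrac{-t}{\sqrt{v^2+t^2}}\right)$. Performing the change of variables $r=\sqrt{v^2+t^2}$, so that $dv=\frac{r}{\sqrt{r^2-t^2}}\,dr$ and the range $v\in(0,\infty)$ becomes $r\in(|t|,\infty)$, and then invoking the parity relation $T_{|n|}(-x)=(-1)^{|n|}T_{|n|}(x)$, converts the expression into
\[
H_n(t)=\frac{2(-1)^{|n|}}{\pi}\int_{|t|}^\infty T_{|n|}\!\left(\tfrac{t}{r}\right)\frac{G_n(\arctan r)}{(1+r^2)^{3/2}}\,\frac{r}{\sqrt{r^2-t^2}}\,dr.
\]

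Finally I would identify this with the right-sided Chebyshev fractional integral \cref{right-sided}. Restricting to $t\ge0$ (which suffices, since $R$ annihilates functions odd in $t$ and hence $R^*g$ is even in $t$) so that $|t|=t$, and recalling $G_n^{\#}(r)=G_n(\arctan r)(1+r^2)^{-3/2}$, the integral above is exactly $\frac{(-1)^{|n|}}{\sqrt\pi}\left(\Upsilon_-^{|n|}G_n^{\#}\right)(t)$, because $\frac{2}{\pi}=\frac{1}{\sqrt\pi}\cdot\frac{2}{\sqrt\pi}$. I expect the only genuinely delicate points to be the Fubini justification and the matching of the lower limit $|t|$ with the lower limit $t$ in the definition of $\Upsilon_-^{|n|}$; the former is handled by boundedness of $g$ and the integrable decay factor, and the latter by the reduction to $t\ge0$, which is the natural domain of the operator.
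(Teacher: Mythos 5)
Your proposal is correct and follows essentially the same route as the paper's proof: substitute the explicit formula for $R^*g$ from \Cref{Dual} into $H_n(t)=\frac{1}{2\pi}\int_0^{2\pi}R^*g(e^{is},t)e^{-ins}\,ds$, interchange the integrals, shift $s$ to isolate $G_n$, collapse the $\sigma$-sum into $2T_{|n|}$ via Euler's formula and the parity of $T_{|n|}$, and change variables $r=\sqrt{v^2+t^2}$ to reach the right-sided Chebyshev fractional integral. Your explicit handling of the lower limit $|t|$ versus $t$ (by restricting to $t\ge 0$) is a minor point of extra care that the paper leaves implicit, but it does not change the argument.
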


\begin{proof}
If $n\in \mathbb{Z}$, then using the formula for $R^*$ from \Cref{Dual}
\begin{align*}
&H_n(t) = \frac{1}{2\pi} \int_0^{2\pi} R^*g(e^{is},t)e^{-ins}\,ds\\
&\qquad =\frac{1}{2\pi^2}  \sum_{\sigma = \pm 1}  \int_0^{2\pi} \, \int^{\infty}_{0} g\left(s + \sigma \arccos \left( \frac{-t}{\sqrt{v^2 + t^2}}\right),\arctan \sqrt{v^2 + t^2} \right)\, \frac{e^{-ins}dv \, ds}{(1+v^2 +t^2)^{\frac{3}{2}}} \\
&\qquad =\frac{1}{2\pi^2}  \sum_{\sigma = \pm 1}  \int^{\infty}_{0} \, \int_0^{2\pi} g\left(s + \sigma \arccos \left( \frac{-t}{\sqrt{v^2 + t^2}}\right),\arctan \sqrt{v^2 + t^2} \right)\, \frac{e^{-ins} ds \, dv}{(1+v^2 +t^2)^{\frac{3}{2}}} .
\end{align*}
Letting $u = s +\sigma \arccos \left( \frac{-t}{\sqrt{v^2 + t^2}} \right)$ with respect to the $s$ variable and noting that the integrand is a $2\pi-$periodic function of $u$ yields
\begin{align*}
H_n(t) &=\frac{1}{2\pi^2}  \sum_{\sigma = \pm 1}  \int^{\infty}_{0} \, \int_{0}^{2\pi} g\left(u,\arctan \sqrt{v^2 + t^2} \right)\, \frac{e^{-in\left( u -\sigma \arccos \left( \frac{-t}{\sqrt{v^2 + t^2}} \right) \right)} du \, dv}{(1+v^2 +t^2)^{\frac{3}{2}}}. 
\end{align*}
This way,
\begin{align*}
H_n(t)&=\frac{1}{\pi}  \sum_{\sigma = \pm 1}  \int^{\infty}_{0} \left( \frac{1}{2\pi} \int_{0}^{2\pi} g\left(u,\arctan \sqrt{v^2 + t^2} \right) e^{-inu} \, du \right) \, \frac{ e^{in\sigma \arccos \left( \frac{-t}{\sqrt{v^2 + t^2}} \right)}\, dv}{(1+v^2 +t^2)^{\frac{3}{2}}}\\
&=\frac{1}{\pi}  \sum_{\sigma = \pm 1}  \int^{\infty}_{0} G_n\left(\arctan \sqrt{v^2 +t^2}\right)\, \frac{e^{in\sigma \arccos \left( \frac{-t}{\sqrt{v^2 + t^2}} \right)}\, dv}{(1+v^2 +t^2)^{\frac{3}{2}}} .
\end{align*}

Using Euler's formula and noticing that
\begin{align*}
\sin \left( n \sigma  \arccos \left( \frac{-t}{\sqrt{v^2 + t^2}} \right) \right) &= \sigma \sin \left( n\arccos \left( \frac{-t}{\sqrt{v^2 + t^2}} \right) \right)\\
\cos \left( n \sigma  \arccos \left( \frac{-t}{\sqrt{v^2 + t^2}} \right) \right) &= \cos \left( n\arccos \left( \frac{-t}{\sqrt{v^2 + t^2}} \right) \right)
\end{align*}
yields
\begin{align*}
H_n(t)&=(-1)^{|n|}\frac{2}{\pi} \int^{\infty}_{0} G_n\left(\arctan \sqrt{v^2 +t^2}\right) \, \frac{T_{|n|}\left( \frac{t}{\sqrt{v^2 + t^2}} \right)}{(1+v^2 +t^2)^{\frac{3}{2}}} \, dv
\end{align*}
 
Letting $u=\arctan{\sqrt{v^2+t^2}}$ we obtain 
\begin{align*}
H_n(t)
&=(-1)^{|n|}\frac{2}{\pi} \int_{\arctan |t|}^{\frac{\pi}{2}}  G_{n}(u)\frac{ T_{|n|}\left( \frac{t}{\tan u } \right) \tan u}{\sqrt{1+\tan^2 u} \sqrt{\tan^2 u - t^2}} \,du.
\end{align*}
Making the change of variables $\omega = \tan u$ yields
\begin{align*}
H_n(t) &=(-1)^{|n|}\frac{2}{\pi} \int_{|t|}^{\infty}  G_{n}(\arctan \omega )\frac{\omega}{(1+\omega^2)^{\frac{3}{2}} \sqrt{\omega^2 - t^2}} T_{|n|} \left( \frac{t}{\omega} \right) \,d\omega.
\end{align*}
Letting
\begin{align}\label{Ghashtag}
G_n^{\#}(\omega)=\frac{ G_{n}\left(\arctan \omega \right)}{(1+\omega^2)^{\frac{3}{2}}},
\end{align}
we can see that
\begin{align*}
H_n(t) &=\frac{(-1)^{|n|}}{\sqrt{\pi}} \left( \Upsilon_{-}^{|n|} G_n^{\#} \right)(t),
\end{align*}
where $\Upsilon_-^{|n|}$ is the right-sided Chebyshev fractional integral as defined in \cref{right-sided} in \Cref{sec:suptheory}.
\end{proof}

\subsection*{Null Space, Injectivity of $\mathbf{R}^*$ and an Inversion Formula}
We will show that if a nontrivial, even, and continuous function $g$ belongs to the null space of $R^*$, then $g$ would be unbounded on the equator or at the poles  of $\mathbb{S}^2$. This contradiction implies that $R^*$ is injective in the space of continuous and even functions on $\mathbb{S}^2$.

Now, 
\begin{align*}
R^*g(e^{is},t)=0 &\iff \forall n \in \mathbb{Z} \quad H_n(t) =0\\
&\iff  \forall n \in \mathbb{Z} \quad \frac{(-1)^{|n|}}{\sqrt{\pi}}\left( \Upsilon_{-}^{|n|} G_n^{\#} \right)(t) = 0.
\end{align*}
Assuming $G^\#_n$ fulfills the hypotheses in \Cref{upsmenosinj} and \Cref{upsmenos} and $R^*g = 0$, then, by the aforementioned theorems, if $|n|<2$ then $G_n^{\#}  =0$, implying that
\begin{align*}
G_{n}(\rho) =  0
\end{align*}
for all $\rho\in (0,\pi/2)$. If $|n| \geq 2$, then
\begin{align*}
G_n^{\#}(\omega) =  \sum_{k=0}^{\lfloor \frac{|n|}{2} \rfloor-1} c^n_k \omega^{2k-|n|}
\end{align*}
for $\omega \in (0,\infty) $ and some coefficients $c^n_k \in \mathbb{C}$. Therefore, 
\begin{align*}
G_n^{\#}(\omega) =  \sum_{k=0}^{\lfloor \frac{|n|}{2} \rfloor-1} c^n_k \omega^{2k-|n|} &\iff
G_{n}(\arctan \omega) =  \sum_{k=0}^{\lfloor \frac{|n|}{2} \rfloor-1} c^n_k \omega^{2k-|n|}\left(1+\omega^2\right)^{\frac{3}{2}}\\
 &\iff G_{n}(\rho) =  \sum_{k=0}^{\lfloor \frac{|n|}{2} \rfloor-1} c^n_k \frac{\left(1+\tan^2\rho\right)^\frac{3}{2}}{\tan^{|n|-2k}\rho}  
\end{align*}
for $\rho \in (0,\frac{\pi}{2}) $.

\begin{theorem}\label{injectivityR*}
The dual transform $R^*$ is injective in $C_e(\mathbb{S}^2)$.
\end{theorem}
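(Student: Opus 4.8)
The plan is to prove injectivity on the Fourier side: write $g(\theta,\rho)=\sum_{n\in\mathbb{Z}}G_n(\rho)e^{in\theta}$ and $R^*g(e^{is},t)=\sum_{n\in\mathbb{Z}}H_n(t)e^{ins}$, and show that $R^*g\equiv 0$ forces $G_n\equiv 0$ for every $n$, hence $g\equiv 0$. Since $R^*g\equiv 0$ is equivalent to $H_n\equiv 0$ for all $n$, \Cref{lemmaforwardeqgstar} translates this into $(\Upsilon_{-}^{|n|}G_n^{\#})(t)=0$ for almost all $t>0$, where $G_n^{\#}(v)=G_n(\arctan v)/(1+v^2)^{3/2}$. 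Thus the whole problem reduces to the injectivity and null-space structure of the right-sided Chebyshev fractional integral, already recorded in \Cref{upsmenosinj} and \Cref{upsmenos}.

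Before invoking those lemmas I would first verify that each $G_n^{\#}$ lies in the required function class, and this is where continuity is used. Because $g$ is continuous on the compact manifold $\mathbb{S}^2$ it is bounded, say $|g|\le M$, so $|G_n(\rho)|\le M$ for every $n$ and $\rho$, whence $|G_n^{\#}(v)|\le M(1+v^2)^{-3/2}$. This decay makes \cref{condition1} and \cref{lchiminus} hold for all $a>0$, so the hypotheses of \Cref{upsmenosinj} and \Cref{upsmenos} are satisfied. For $|n|<2$ the operator $\Upsilon_{-}^{|n|}$ is injective by \Cref{upsmenosinj}, giving $G_n^{\#}\equiv 0$ and hence $G_n\equiv 0$. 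For $|n|\ge 2$, \Cref{upsmenos} yields
\[
G_n^{\#}(\omega)=\sum_{k=0}^{\lfloor |n|/2\rfloor-1}c_k^n\,\omega^{2k-|n|},
\qquad\text{i.e.}\qquad
G_n(\rho)=\sum_{k=0}^{\lfloor |n|/2\rfloor-1}c_k^n\,\frac{(1+\tan^2\rho)^{3/2}}{\tan^{|n|-2k}\rho},
\]
exactly as in the discussion preceding the theorem.

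The decisive step, and the one I expect to be the main obstacle, is to rule out nonzero $c_k^n$ using only boundedness of $g$, since the null-space formula a priori permits cancellations among the singular terms. I would examine the behavior as $\rho\to 0^+$: the constraint $k\le\lfloor|n|/2\rfloor-1$ forces $2k\le|n|-2$, so every exponent $|n|-2k$ is a positive integer and each term $c_k^n/\tan^{|n|-2k}\rho$ diverges, with the $k=0$ term carrying the most negative power $\tan^{-|n|}\rho$. Because the powers $-(|n|-2k)$ are pairwise distinct, no cancellation is possible at leading order, so $G_n(\rho)\sim c_0^n\tan^{-|n|}\rho$; boundedness of $G_n$ near the pole (indeed $G_n(\rho)\to 0$ there, as $g$ is continuous at the north pole and $n\ne 0$) forces $c_0^n=0$, and peeling off the successive leading terms forces $c_1^n=\cdots=c_{\lfloor|n|/2\rfloor-1}^n=0$ by induction. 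The same conclusion follows from the blow-up at the equator, where $(1+\tan^2\rho)^{3/2}/\tan^{|n|-2k}\rho\sim\tan^{3-|n|+2k}\rho$ as $\rho\to\pi/2$. Either way $G_n\equiv 0$ for all $n$, so every Fourier coefficient of $g$ vanishes and $g\equiv 0$, proving that $R^*$ is injective on $C_e(\mathbb{S}^2)$.
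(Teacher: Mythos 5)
Your proposal is correct and follows essentially the same route as the paper: reduce to the Fourier side via \Cref{lemmaforwardeqgstar}, verify $G_n^{\#}\in L^1_{\chi^-_{|n|}}(0,\infty)$, apply \Cref{upsmenosinj} and \Cref{upsmenos}, and rule out the polynomial null-space terms by continuity of $g$; your leading-order analysis as $\rho\to 0^+$ is in fact a more careful justification of the paper's one-line claim that nonzero $c_k^n$ would make $g$ discontinuous. (Only your parenthetical ``same conclusion from the equator'' is slightly off for odd $|n|$, where the top term tends to a constant rather than blowing up, but the pole argument already covers all cases.)
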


\begin{proof}
Let $g \in C(\mathbb{S}^2)$ be an even function and, without loss of generality, assume $ 0 < \rho <\pi/2 $. As $g$ is a continuous function, its Fourier coefficients $G_n(\rho)$ are also continuous on $[0,\pi/2]$. We begin by showing that $G_n^\# \in L_{\chi^-_{|n|}}((0,\infty))$, that is $G_n^\#$ satisfies \cref{lchiminus} in \Cref{sec:suptheory} for all $n\in \mathbb{Z}$. Let $n \in \mathbb{Z}$ and $a_1>0 $. If $n$ is even then
\begin{align*}
\int_{a_1}^\infty \left| G_n^\# (v)\right|\,dv &= \int_{a_1}^\infty \frac{G_{n}(\arctan v)}{(1+v^2)^{\frac{3}{2}}}\, dv\\
&= \int_{\arctan a_1}^{\frac{\pi}{2}} \frac{|G_{n}(u)|}{\sqrt{1+\tan^2u}}\, du
\end{align*}
by letting $u=\arctan v$. Therefore,
\begin{align*}
\int_{a_1}^\infty \left| G_n^\# (v)\right|\,dv &\leq \int_{\arctan a_1}^{\frac{\pi}{2}} \left| G_n (u)\right|\,du\\
&\leq \| G_n\|_1\\
&<\infty.
\end{align*}
If $n$ is odd then
\begin{align*}
\int_{a_1}^\infty \frac{\left| G_n^\#(v) \right| (1+|\log v |)}{v}\, dv &= \int_{a_1}^\infty \frac{|G_n(\arctan v)|(1+|\log v|)}{v (1+v^2)^{\frac{3}{2}}} dv\\
&= \int_{\arctan a_1}^{\frac{\pi}{2}} \frac{|G_n(u)|(1+|\log \tan u| )}{\tan u \, \sqrt{1+\tan^2 u }} du
\end{align*}
by letting $u=\arctan v$. This way
\begin{align*}
\int_{a_1}^\infty \frac{\left| G_n^\#(v) \right| (1+|\log v |)}{v}\, dv 
&\leq \int_{\arctan a_1}^{\frac{\pi}{2}} |G_n(u)|\frac{(1+|\log \tan u| )}{\tan u} du.
\end{align*}
The quantity $\frac{(1+|\log \tan u| )}{\tan u}$ attains the following maximum in $[\arctan a_1, \pi/2)$: 
\begin{align*}
\max_{[\arctan a_1, \pi/2)} \left\lbrace  \frac{(1+|\log \tan u| )}{\tan u} \right\rbrace = \begin{cases} \frac{1}{a_1} + \frac{|\log a_1|}{a_1} &\quad \text{ if } 0<a_1 <1, \\ \frac{1}{a_1} + \frac{1}{e} &\quad \text{ if }1 \leq a_1.\end{cases}
\end{align*}
Therefore,
\begin{align*}
\int_{a_1}^\infty \frac{\left| G_n^\#(v) \right| (1+|\log v |)}{v}\, dv 
&\leq \left(\frac{1}{a_1} + \max \left\lbrace \frac{1}{e},\frac{|\log a_1|}{a_1} \right\rbrace \right)\int_{\arctan a_1}^{\frac{\pi}{2}} |G_n(u)|du\\
&\leq \left(\frac{1}{a_1} + \max \left\lbrace \frac{1}{e},\frac{|\log a_1|}{a_1} \right\rbrace \right) \|G_n\|_1\\
&<\infty.
\end{align*}
Thus proving that $G_n^\# \in L_{\chi^-_{|n|}}^1(0,\infty)$.

However, even and continuous functions in the null space of $R^*$ have Fourier series coefficients
\begin{align*}
G_{n}(\rho) \eqae  \sum_{k=0}^{\lfloor \frac{|n|}{2} \rfloor-1} c^n_k \frac{\left(1+\tan^2\rho\right)^\frac{3}{2}}{ \tan^{|n|-2k}\rho} 
\end{align*}
for all $|n|\geq2$. Consequently, if any of the coefficients $c^n_k \neq 0$, then $g$ would not be continuous on the equator or at the poles of $\mathbb{S}^2$. It follows, that $R^*$ must have a trivial null space when defined in the space of continuous and even functions on $\mathbb{S}^2$.
\end{proof}

\begin{corollary}
The dual transform $R^*$ is injective in $R(H^2_{pc}(\mathbb{S}\times\mathbb{R}))$.
\end{corollary}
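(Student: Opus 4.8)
The plan is to deduce the corollary directly from \Cref{injectivityR*} by showing that the image $R(H^2_{pc}(\mathbb{S}\times\mathbb{R}))$ is contained in $C_e(\mathbb{S}^2)$, the space of continuous even functions on which $R^*$ has already been shown to be injective. Since the restriction of an injective map to any subset of its domain remains injective, this containment is all that is required.

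First I would establish evenness. For any $f\in H^2_{pc}(\mathbb{S}\times\mathbb{R})$, decompose $f=f_e+f_o$ into its even and odd parts with respect to the equator; since $R$ annihilates odd functions, $Rf=Rf_e$. The computation already used in \Cref{prop2.1} shows that $Rf_e(\theta,\rho)=Rf_e(\theta+\pi,\pi-\rho)$, which is exactly the symmetry defining membership in $C_e(\mathbb{S}^2)$.

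Next I would establish continuity. By \Cref{SobolevM}, $f$ admits a bounded continuous representative, which I take to be $f$ itself. On $\Xi$ the integrand $(\theta,\rho,s)\mapsto f(e^{is},-\tan\rho\cos(\theta-s))$ is jointly continuous and dominated by $\|f\|_\infty$, so the dominated convergence theorem yields continuity of $Rf$ on $\Xi$; in particular $Rf$ is continuous at the poles, where $\tan\rho$ stays bounded. The delicate point, which I expect to be the main obstacle, is continuity across the equator $\rho=\pi/2$, where the parametrization of $E_{\zeta(\theta,\rho)}$ degenerates and $Rf$ is defined by the limit in \cref{Rparametric}. Here I would invoke the $H^2_{pc}$ hypothesis: as in \Cref{prop2.1}, $-\tan\rho\cos(\theta-s)\to+\infty$ on the relevant half-period, so $f(e^{is},-\tan\rho\cos(\theta-s))\to C(s)$ pointwise and, by dominated convergence, $Rf(\theta,\rho)\to\frac{1}{\pi}\int_{\theta+\pi/2}^{\theta+3\pi/2}C(s)\,ds$. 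This equatorial value is continuous in $\theta$ because translation is continuous in $L^1([0,2\pi))$, and upgrading the pointwise-in-$\theta$ limit to a joint limit near the equator requires a uniformity argument, namely that the convergence of $Rf(\cdot,\rho)$ as $\rho\to\pi/2$ is uniform in $\theta$, again via the uniform bound $\|f\|_\infty$ and the periodic structure in $s$.

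Having shown $Rf\in C_e(\mathbb{S}^2)$ for every $f\in H^2_{pc}(\mathbb{S}\times\mathbb{R})$, we obtain $R(H^2_{pc}(\mathbb{S}\times\mathbb{R}))\subseteq C_e(\mathbb{S}^2)$. By \Cref{injectivityR*}, $R^*$ is injective on $C_e(\mathbb{S}^2)$, hence its restriction to the subset $R(H^2_{pc}(\mathbb{S}\times\mathbb{R}))$ is injective as well. The only genuine work lies in the equatorial continuity of the second step; everything else is a formal consequence of \Cref{injectivityR*}.
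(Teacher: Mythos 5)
Your proposal is correct and follows essentially the same route as the paper: reduce to the even part, pass to the bounded continuous representative via \Cref{SobolevM}, conclude $Rf\in C_e(\mathbb{S}^2)$, and apply \Cref{injectivityR*}. The only difference is that you spell out the continuity of $Rf$ (in particular across the equator, where a uniformity argument is indeed needed), whereas the paper simply asserts that $Rf$ is continuous and even; your extra care there is warranted but does not change the argument.
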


\begin{proof}
Let $f \in H_{pc}^2(\mathbb{S}\times \mathbb{R})$. Since $R$ annihilates odd functions we may assume, without loss of generality, that $f$ is even. By \Cref{SobolevM} there exists a bounded and continuous function that is equal to $f$ almost everywhere. We may assume, without loss of generality, that $f$ denotes this continuous representative. Since $f$ is a continuous even function, $Rf$ is continuous and even on $\mathbb{S}^2$. Therefore, by \Cref{injectivityR*}, $R^*$ is injective in $R(H^2_{pc}(\mathbb{S}\times \mathbb{R}))$.

\end{proof}

We conclude by providing an inversion formula.

\begin{theorem}
Let $\Xi$ be as defined in \Cref{Sec:Transform}, $g \in C(\overline{\Xi})$ be an even function on $\mathbb{S}^2$ and 
\begin{align*}
g(\theta,\rho)=\sum_{n \in \mathbb{Z}} G_n(\rho)e^{in\theta}
\end{align*}
its Fourier series representation for all  $(\theta,\rho) \in \Xi$. If for $|n|\geq 2$
\begin{align*}
\int_{\arctan a}^{\frac{\pi}{2}} |G_n(u)| \tan^{|n|-1}u\, du <\infty \qquad \text{ for all a > 0},
\end{align*}
then for almost all  $t \in (0,\infty)$
\begin{align*}
G_{n}(\arctan t)= \begin{cases}
\sqrt{\pi}(1+t^2)^{\frac{3}{2}} \left(\textsc{D}^{\frac{1}{2}}_{-,2} H_0\right)(t) &\qquad \text{ for } n =0,\\
&\qquad \\
-\sqrt{\pi}t(1+t^2)^{\frac{3}{2}} \left( D_{-,2}^{\frac{1}{2}} \frac{H_1}{t} \right)(t)&\qquad \text{ for }|n|=1,\\
&\qquad \\
(-1)^{|n|+1} \frac{\sqrt{\pi}}{2}(1+t^2)^{\frac{3}{2}}\frac{d}{dt}  \left( \overset{*}{\Upsilon}^{|n|}_- t^{-2}H_n\right)(t) &\qquad \text{ for }|n| \geq 2,
\end{cases}
\end{align*}
where
\begin{align*}
H_n(t) &=\frac{(-1)^{|n|}}{\sqrt{\pi}} \left( \Upsilon_{-}^{|n|} G_n^{\#} \right)(t),
\end{align*}
and
\begin{align*}
\textsc{D}^{\frac{1}{2}}_{-,2} \varphi &= -\frac{1}{2} \frac{d}{dt} t I^{\frac{1}{2}}_{-,2} t^{-2} \varphi,\\
\left( I^{\frac{1}{2}}_{-,2} \varphi \right)(t)&=\frac{2}{\sqrt{\pi}} \int_t^\infty \frac{\varphi(r)r}{\sqrt{r^2 -t^2}}dr,\\
\left( \overset{*}{\Upsilon}^m_- \varphi \right)(t)&=\frac{2t}{\sqrt{\pi}}\int_t^\infty\frac{\varphi(r)T_m\left(\frac{r}{t} \right)}{r \, \sqrt{r^2-t^2}} dr
\end{align*} 
as defined in \Cref{sec:suptheory}.
\end{theorem}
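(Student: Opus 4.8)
The plan is to start from the forward relation established in \Cref{lemmaforwardeqgstar}, namely $H_n(t) = \frac{(-1)^{|n|}}{\sqrt{\pi}}(\Upsilon_-^{|n|}G_n^{\#})(t)$ with $G_n^{\#}(v) = G_n(\arctan v)/(1+v^2)^{3/2}$, and to invert the Chebyshev fractional integral $\Upsilon_-^{|n|}$ coefficient by coefficient. Once $G_n^{\#}$ has been recovered, the claimed expression for the Fourier coefficient follows from the algebraic identity $G_n(\arctan t) = (1+t^2)^{3/2}G_n^{\#}(t)$, and resumming the Fourier series reconstructs $g$. Thus the whole argument reduces to inverting $\Upsilon_-^{|n|}$ in the three regimes $|n|=0$, $|n|=1$, and $|n|\geq 2$, and to checking that $G_n^{\#}$ satisfies the hypotheses of the relevant inversion theorems.

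For the low-order cases I would use the factorizations recorded after \Cref{Rub252}, namely $\Upsilon^0_- = I^{\frac{1}{2}}_{-,2}$ and $\Upsilon^1_- f = t\, I^{\frac{1}{2}}_{-,2}\, t^{-1}f$. For $n=0$, rewriting the forward relation as $I^{\frac{1}{2}}_{-,2}G_0^{\#} = \sqrt{\pi}\,H_0$ and applying the operator $\textsc{D}^{\frac{1}{2}}_{-,2}$ of \Cref{Rub244} gives $G_0^{\#} = \sqrt{\pi}\,\textsc{D}^{\frac{1}{2}}_{-,2}H_0$, whence the first case. For $|n|=1$, the same theorem applied to the function $t^{-1}G_1^{\#}$ produces $t^{-1}G_1^{\#} = -\sqrt{\pi}\,\textsc{D}^{\frac{1}{2}}_{-,2}(H_1/t)$, and multiplying by $t(1+t^2)^{3/2}$ yields the second case. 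The only point to watch here is the sign bookkeeping coming from the factor $(-1)^{|n|}$.

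For $|n|\geq 2$ I would invoke \Cref{Rub252} directly with $m=|n|$, $f=G_n^{\#}$, and $g=(-1)^{|n|}\sqrt{\pi}\,H_n$, which returns $G_n^{\#} = (-1)^{|n|+1}\frac{\sqrt{\pi}}{2}\frac{d}{dt}\bigl(\overset{*}{\Upsilon}^{|n|}_-\, t^{-2}H_n\bigr)$ and then the third case after multiplying by $(1+t^2)^{3/2}$. Before applying the theorem I must verify its standing hypothesis $\int_a^{\infty} |G_n^{\#}(t)|\,t^{|n|-1}\,dt < \infty$ for every $a>0$; the substitution $u=\arctan t$ turns this integral into $\int_{\arctan a}^{\pi/2} |G_n(u)|\,\tan^{|n|-1}u\,\cos u\,du$, which is dominated by the theorem's integrability hypothesis on $G_n$ since $\cos u \leq 1$. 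For the cases $n=0,1$ the corresponding requirement of \Cref{Rub244} is the plain $L^1$ condition on $G_0^{\#}$ and on $t^{-1}G_1^{\#}$; here continuity of $g$ on $\overline{\Xi}$ — hence boundedness of $G_n$ up to the equator — is exactly what makes the transformed integrals finite without any extra assumption, which explains why the statement imposes an explicit integrability hypothesis only for $|n|\geq 2$.

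I expect the main obstacle to be precisely this verification of the convergence conditions: matching the weight $t^{|n|-1}$ against the decay of $G_n^{\#}$ as $t\to\infty$ — equivalently, the behaviour of $G_n$ as $\rho\to\pi/2$ — through the change of variables $t=\tan\rho$, together with checking the $L^1$ hypotheses of \Cref{Rub244} for $G_0^{\#}$ and $t^{-1}G_1^{\#}$. The inversion formulas themselves are then immediate applications of \Cref{Rub244} and \Cref{Rub252}; the genuine care lies in justifying that those theorems apply and in correctly tracking the constants $\sqrt{\pi}$, the factor $\tfrac{1}{2}$, and the sign $(-1)^{|n|}$ through each of the three reductions.
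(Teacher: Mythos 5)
Your proposal follows essentially the same route as the paper: start from the forward relation $H_n = \frac{(-1)^{|n|}}{\sqrt{\pi}}\Upsilon_-^{|n|}G_n^{\#}$ of \Cref{lemmaforwardeqgstar}, verify the integrability hypotheses of \Cref{Rub244} and \Cref{Rub252} via the substitution $u=\arctan t$ (using continuity of $g$ for $|n|<2$ and the stated weighted condition for $|n|\geq 2$), and then invert coefficientwise, recovering $G_n(\arctan t)=(1+t^2)^{3/2}G_n^{\#}(t)$. Your treatment is correct and, if anything, spells out the $n=0$ and $|n|=1$ factorizations and the sign bookkeeping more explicitly than the paper does.
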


\begin{proof}

We prove this using \Cref{Rub244} and \Cref{Rub252} from \Cref{sec:suptheory}. First notice that if $|n| < 2$ the conditions of the aforementioned theorems imply that $g$ must be such that
\begin{align*}
\int_{\arctan a}^{\frac{\pi}{2}} \frac{|G_n(u)|\tan^{-\nu}u}{\sqrt{1+\tan^2u}}du <\infty \quad \text{ for all } a > 0 \text{ and }\nu=\begin{cases} 0 &\quad \text{ if }n \text{ is even}\\
1 &\quad \text{ if }n \text{ is odd.}\end{cases}
\end{align*}
However, since $g$ is a continuous function on $\mathbb{S}^2$
\begin{align*}
\int_{\arctan a}^{\frac{\pi}{2}} \frac{|G_n(u)|\tan^{-\nu}u}{\sqrt{1+\tan^2u}}du & \int_{\arctan a}^{\frac{\pi}{2}}|G_n(u)| \tan^{-\nu}u\,du\\
&\leq  \max \left\lbrace \frac{1}{a},1 \right\rbrace \, \|G_n(u)\|_1\\
&<\infty.
\end{align*}
Therefore, since for $|n| <2 $ we have that
\begin{align*}
\int_{\arctan a}^{\frac{\pi}{2}} \frac{|G_n(u)|(\tan u)^{-\nu}}{\sqrt{1+\tan^2u}}du <\infty \quad\text{ where }\nu=\begin{cases} 0 &\quad \text{ if }n \text{ is even}\\
1 &\quad \text{ if }n \text{ is odd,}\end{cases}
\end{align*}
and 
\begin{align*}
\int_{\arctan a}^{\frac{\pi}{2}} \frac{|G_n(u)| \tan^{|n|-1}u}{\sqrt{1+\tan^2 u}} du &\leq \int_{\arctan a}^{\frac{\pi}{2}}|G_n(u)| \tan^{|n|-1}u\, du \\
&<\infty
\end{align*}
for $|n|\geq 2$ and all $a>0$, by \Cref{Rub244} and \Cref{Rub252} $G_n(t)$ can be uniquely reconstructed almost everywhere from the mean values
\begin{align}\label{forwardH}
H_n(t) &=\frac{(-1)^{|n|}}{\sqrt{\pi}} \left( \Upsilon_{-}^{|n|} G_n^{\#} \right)(t).
\end{align}
Moreover,
\begin{align*}
G_{n}(\arctan t)= \begin{cases}
\sqrt{\pi}(1+t^2)^{\frac{3}{2}} \left(\textsc{D}^{\frac{1}{2}}_{-,2} H_0\right)(t) &\qquad \text{ for } n =0,\\
&\qquad \\
-\sqrt{\pi}t(1+t^2)^{\frac{3}{2}} \left( D_{-,2}^{\frac{1}{2}} \frac{H_1}{t} \right)(t)&\qquad \text{ for }|n|=1,\\
&\qquad \\
(-1)^{|n|+1} \frac{\sqrt{\pi}}{2}(1+t^2)^{\frac{3}{2}}\frac{d}{dt}  \left( \overset{*}{\Upsilon}^{|n|}_- t^{-2}H_n\right)(t) &\qquad \text{ for }|n| \geq 2,
\end{cases}
\end{align*}
\end{proof}

We would like to point out that, analogously to \textit{Remark 2.53} from \cite{RubFractional}, we may relax the assumptions on $G_n$ for $|n| \geq 2$ by assuming only that $g$ is a continuous function on $\mathbb{S}^2$. In this case, we can find non-unique solutions to the forward equation
\begin{align*}
H_n(t) &=\frac{(-1)^{|n|}}{\sqrt{\pi}} \left( \Upsilon_{-}^{|n|} G_n^{\#} \right)(t)
\end{align*}
for all $|n| \geq 2$.

\section*{Acknowledgments}
The author is thankful to Fulton Gonzalez and Eric Todd Quinto for all their suggestions, guidance, and time. Their comments and help were invaluable. 

\bibliographystyle{siamplain}
\bibliography{MasterBibliography_arxiv}

\begin{thebibliography}{10}

\bibitem{AubNonlinearProblems}
{\sc T.~Aubin}, {\em Some nonlinear problems in {R}iemannian geometry},
  Springer Monographs in Mathematics, Springer-Verlag, Berlin, 1998,
  \url{https://doi.org/10.1007/978-3-662-13006-3},
  \url{https://doi.org/10.1007/978-3-662-13006-3}.

\bibitem{CormRadiological}
{\sc A.~Cormack}, {\em Representation of a function by its line integrals, with
  some radiological applications}, Journal of Applied Physics, 34 (1963),
  pp.~2722--2727.

\bibitem{CormSpheresThroughOrigin}
{\sc A.~M. Cormack and E.~T. Quinto}, {\em A {R}adon transform on spheres
  through the origin in {${\bf R}\sp{n}$} and applications to the {D}arboux
  equation}, Trans. Amer. Math. Soc., 260 (1980), pp.~575--581,
  \url{https://doi.org/10.2307/1998023}, \url{https://doi.org/10.2307/1998023}.

\bibitem{FunkLinien}
{\sc P.~Funk}, {\em \"{U}ber {F}l\"{a}chen mit lauter geschlossenen
  geod\"{a}tischen {L}inien}, Math. Ann., 74 (1913), pp.~278--300,
  \url{https://doi.org/10.1007/BF01456044},
  \url{https://doi.org/10.1007/BF01456044}.

\bibitem{GelfIntegralGeo}
{\sc I.~M. Gelfand}, {\em Integral geometry and its relation to the theory of
  representations}, Russian Math. Surveys, 15 (1960), pp.~143--151,
  \url{https://doi.org/10.1070/RM1960v015n02ABEH004218},
  \url{https://doi.org/10.1070/RM1960v015n02ABEH004218}.

\bibitem{GelfDoubleFibration}
{\sc I.~M. Gelfand, M.~I. Graev, and Z.~J. Shapiro}, {\em Differential forms
  and integral geometry}, Funkcional. Anal. i Prilo\v{z}en., 3 (1969),
  pp.~24--40.

\bibitem{HelgGeneralizationsRadon}
{\sc S.~Helgason}, {\em A duality in integral geometry; some generalizations of
  the {R}adon transform}, Bull. Amer. Math. Soc., 70 (1964), pp.~435--446,
  \url{https://doi.org/10.1090/S0002-9904-1964-11147-2},
  \url{https://doi.org/10.1090/S0002-9904-1964-11147-2}.

\bibitem{KuchMedicalIm}
{\sc P.~Kuchment}, {\em The {R}adon transform and medical imaging}, vol.~85 of
  CBMS-NSF Regional Conference Series in Applied Mathematics, Society for
  Industrial and Applied Mathematics (SIAM), Philadelphia, PA, 2014.

\bibitem{QuellGeneralizationFunk}
{\sc M.~Quellmalz}, {\em A generalization of the {F}unk-{R}adon transform},
  Inverse Problems, 33 (2017), pp.~035016, 26,
  \url{https://doi.org/10.1088/1361-6420/33/3/035016},
  \url{https://doi.org/10.1088/1361-6420/33/3/035016}.

\bibitem{QuellFunkRadon}
{\sc M.~Quellmalz}, {\em The {F}unk-{R}adon transform for hyperplane sections
  through a common point}, Anal. Math. Phys., 10 (2020), pp.~Paper No. 38, 29,
  \url{https://doi.org/10.1007/s13324-020-00383-2},
  \url{https://doi.org/10.1007/s13324-020-00383-2}.

\bibitem{QuintoependenceGeneralizedRadon}
{\sc E.~T. Quinto}, {\em The dependence of the generalized {R}adon transform on
  defining measures}, Trans. Amer. Math. Soc., 257 (1980), pp.~331--346,
  \url{https://doi.org/10.2307/1998299}, \url{https://doi.org/10.2307/1998299}.

\bibitem{QuintoInjectivity}
{\sc E.~T. Quinto}, {\em The invertibility of rotation invariant {R}adon
  transforms}, vol.~91, 1983, pp.~510--522,
  \url{https://doi.org/10.1016/0022-247X(83)90165-8},
  \url{https://doi.org/10.1016/0022-247X(83)90165-8}.

\bibitem{RadonLangs}
{\sc J.~Radon}, {\em \"{U}ber die {B}estimmung von {F}unktionen durch ihre
  {I}ntegralwerte l\"{a}ngs gewisser {M}annigfaltigkeiten}, in 75 years of
  {R}adon transform ({V}ienna, 1992), Conf. Proc. Lecture Notes Math. Phys.,
  IV, Int. Press, Cambridge, MA, 1994, pp.~324--339.

\bibitem{RubNotes2006}
{\sc B.~Rubin}, {\em Notes on radon transforms}.
\newblock Available at
  \url{https://www.ms.u-tokyo.ac.jp/~toshi/seminar/20060725rubin.pdf}
  (2021/09/23), 2006.

\bibitem{RubFractional}
{\sc B.~Rubin}, {\em Introduction to radon transforms with elements of
  fractional calculus and harmonic analysis}, vol.~106 of Encyclopedia of
  mathematics and its applications, Cambridge University Press, New York,
  1~ed., 2015.

\end{thebibliography}
\end{document}